\newcommand{\Z}{\mathbb{Z}} % les entiers relatifs
\newcommand{\Q}{\mathbb{Q}} % les rationnels
\newcommand{\R}{\mathbb{R}} % les rationnels
\newcommand{\F}{\mathbb{F}} % un corps fini
\newcommand{\C}{\mathbb{C}} % un corps fini
\newcommand{\K}{K} % un corps
\newcommand{\p}{\mathfrak{p}} % un p gothik
\newcommand{\pg}{\mathfrak{p}} % un p gothik
\newcommand{\Pg}{\mathfrak{P}} % un p gothik
\newcommand{\q}{\mathfrak{q}} % un p gothik
\newcommand{\Cl}{\operatorname{Cl}}
\newcommand{\OO}{\mathcal{O}}
\newcommand{\OK}{\mathcal{O}_K}
\newcommand{\Nm}{\mathcal{N}}
\newcommand{\Log}{\operatorname{Log}}
\newcommand{\ag}{\mathfrak{a}} % un a gothik
\newcommand{\bg}{\mathfrak{b}} % un b gothik
\newcommand{\cg}{\mathfrak{c}} % un c gothik
\newcommand{\qg}{\mathfrak{q}} % un c gothik
\newcommand{\disc}{\operatorname{disc}}
\newcommand{\dist}{\operatorname{dist}}
\newcommand{\diag}{\operatorname{diag}}
\newcommand{\vol}{\operatorname{Vol}}
\newtheorem{heuristic}{Heuristic}
\newtheorem{remark*}{Remark}
\title{Subexponential algorithms for finding a short generator of a principal ideal and solving $\gamma$-SVP in $\Q(\zeta_{p^s})$}
\author{Jean-Fran\c{c}ois Biasse}
\institute{University of South Florida\\
Department of Mathematics and Statistics\\
biasse@lix.polytechnique.fr}
\date{}
\begin{document}
\maketitle            % écrit le titre
%\tableofcontents      % écrit la table des matières

\begin{abstract}

In this paper, we present a heuristic 
algorithm that computes the ideal class group and a generator of a 
principal ideal (PIP) in $\Q(\zeta_{p^s})$ in time $2^{O(n^{1/2+\varepsilon})}$ for $n:= \deg(\Q(\zeta_{p^s})) $ and 
arbitrarily small $\varepsilon>0$. We introduce practical improvements to enhance its 
run time, and we describe a variant that can compute a generator of a principal ideal $I$ with 
$\Nm(I)\leq 2^{n^b}$ 
in time $2^{O\left(n^{a + o(1)}\right)}$ given 
a precomputation of the class group taking time $2^{O(n^{2-3a+\varepsilon})}$ for an arbitrarily small $\varepsilon>0$ 
where $b\leq 7a - 2$ and $\frac{2}{5}<a<\frac{1}{2}$. In particular, this precomputation allows us to solve 
these instances of the PIP in with a run time lower than $2^{O(n^{1/2})}$.

Relying on recent work from Cramer et al.~\cite{CVP_cyclo}, this yields 
an attack on all the cryptographic schemes 
relying on the hardness of finding a short generator 
of a principal ideal of $\Q(\zeta_{p^s})$ such as the homomorphic encryption 
scheme of Vercauteren and Smart~\cite{fre_smart}, 
and the multilinear maps of Garg, Gentry and Halevi~\cite{mult_maps}. This attack (with and without 
precomputation) is asymptotically faster than the one relying on the work of Biasse and Fieker~\cite{biasse_subexp,ANTS_XI} 
which runs in time $2^{O(n^{2/3+\varepsilon})}$ for arbitrarily small $\varepsilon >0$. 

\begin{comment}
Moreover, combined with the recent heuristic reduction of $\gamma$-SVP to the PIP under 
reasonable assumptions on the class group of Cramer, Ducas and Wesolowski~\cite{Stickelberger}, 
our algorithm solves $\gamma$-SVP in $\Q(\zeta_{p^s})$ for $\gamma\in e^{\tilde{O}(\sqrt{n})}$ on input ideals 
$I$ satisfying $\Nm(I)\leq 2^{n^b}$ in heuristic 
time $2^{O\left(n^{a + o(1)}\right)}$ given 
a precomputation of the class group of $\Q(\zeta_{p^s})$ taking time $2^{O(n^{2-3a+\varepsilon})}$ for an arbitrarily small $\varepsilon>0$. 
When $b\leq 7a - 2$ and $\frac{2}{5}<a<\frac{1}{2}$, we can leverage 
the precomputation to achieve a better asymptotic run time than the BKZ algorithm. 
\end{comment}
In particular, the public keys of the multilinear maps of Garg, Gentry and Halevi~\cite{mult_maps} satisfy the requirement on the input ideal to 
use our PIP algorithm with precomputation. By using $a = 3/7$, $b=1 + o(1)$, 
and given a precomputation of time $2^{O(n^{5/7+\varepsilon})}$ for arbitrarily small $\varepsilon >0$ on $\Q(\zeta_{p^s})$, 
our algorithm provides a key recovery attack 
in time $2^{O\left(n^{3/7 + o(1)}\right)}$

\end{abstract}

\section{Introduction}

Given an ideal $\ag$ of the maximal order $\OK$ of $K = \Q(\zeta_{2^s})$, we want to decide if 
$\ag$ is principal, and if so, compute $\alpha\in\OK$ such that $\ag = (\alpha)\OK$. This corresponds 
to the Principal Ideal Problem (PIP), which is a fundamental problem in computational number theory. 
The resolution of the PIP in classes of number fields of large degree 
recently received a growing attention due to its connection with cryptosystems based on 
the hardness of finding short generators of principal ideals such as the homomorphic encryption 
scheme of Vercauteren and Smart~\cite{fre_smart}, and the multilinear maps of Garg, Gentry and Halevi~\cite{mult_maps}.

A generator of a principal ideal of the maximal order $\OK$ of $K = \Q(\zeta_{p^s})$ can be found in 
heuristic subexponential time $L_\Delta(2/3+\varepsilon,c)$ for some $c>0$ where 
$$L_{\Delta}(a,b) = e^{(b+o(1))(\ln|\Delta|)^a(\ln\ln|\Delta)^{1-a} }$$
by using an algorithm of Biasse and Fieker~\cite{biasse_subexp,ANTS_XI}. We can also 
find one in quantum polynomial time with an algorithm of Biasse and Song~\cite{SODA16} which 
relies on the hidden subgroup resolution algorithm for $\R^{O(n)}$ of Eisentr\"{a}ger, Hallgren, Kitaev and Song~\cite{STOC2014}. 
To go from an arbitrary generator $\alpha$ to a small one, we need to multiply $\alpha$ by the right unit. 
This is an instance of the Bounded Distance Decoding problem in the lattice of the logarithms of the 
complex embeddings of the elements of $K$. Campbel, Groves and Shepherd~\cite{GCHQ} observed that 
using an LLL reduction of the basis of this lattice consisting of the so-called cyclotomic units~\cite[Chap. 8]{Washington} 
and performing a simple round-off reduction yielded the solution to the problem. This fact was corroborated by Schank 
in a replication study~\cite{Pari_implementation}. Shortly thereafter, Cramer, Ducas, Peikert and Regev~\cite{CVP_cyclo} 
proved that this fact was due to the intrinsic geometric properties of the cyclotomic units and that the LLL reduction 
was not necessary. The draft of Campbel et al. also contained elements on ongoing work on a quantum algorithm 
for solving the PIP which was interrupted when they conjectured that the methods of Eisentr\"{a}ger et al.~\cite{STOC2014} 
would ultimately yield a quantum polynomial time algorithm for solving the PIP. 

These recent developments raised the question of the hardness of the Shortest Vector Problem (SVP) in ideal 
lattices (and principal ideal lattices in particular). According to Cramer et al.~\cite[Sec. 6]{CVP_cyclo}, 
a short generator of a principal ideal in a cyclotomic ring is at least within a factor $e^{\tilde{O}(\sqrt{n})}$ 
of a shortest element. To the best of our knowledge, there is currently no method that can leverage the knowledge of 
a short generator to derive an element with length within a better approximation factor to the first minima. However, 
a short generator of a principal ideal is also a solution to $\gamma$-SVP for $\gamma\in e^{\tilde{O}(\sqrt{n})}$. 
\begin{comment}
In addition, Cramer, Ducas and Wesolowski~\cite{Stickelberger} showed that given an ideal $I$ in a cyclotomic ring 
and under reasonable assumptions on the ideal class group, 
there is a heuristic method relying on the Discrete Logarithm Problem (DLP) in $\Cl(\OK)$ to find an ideal $J\subseteq \OK$ 
with $\Nm(J)\in e^{O(n^{3/2})}$ such that $IJ$ is principal. Then a short generator of the principal ideal $IJ\subseteq I$ 
yields a solution to $\gamma$-SVP in $I$ for $\gamma\in e^{\tilde{O}(\sqrt{n})}$.
\end{comment}

\paragraph{\textbf{Contributions}} We describe an attack running in heuristic complexity $2^{O(n^{1/2+\varepsilon})}$ for 
arbitrarily small $\varepsilon>0$ 
against cryptographic schemes relying on the hardness of finding a short generator of an 
ideal in $\Q(\zeta_{p^s})$, including the homomorphic encryption 
scheme of Smart  and Vercauteren~\cite{fre_smart}, and the multilinear maps of Garg, Gentry and Halevi~\cite{mult_maps}. 
We rely 
on a different ideal class group computation and PIP algorithms than the subsexponential methods of Biasse and Fieker~\cite{ANTS_XI} which 
run in time $2^{O(n^{2/3 + \varepsilon})}$ 
for arbitrarily small $\varepsilon > 0$. We take  
advantage of the small height of the defining polynomial of fields of the form $\Q(\zeta_{p^s})$, and we use a modified 
$\qg$-descent method to solve the PIP.

We also describe practical improvements to our algorithm for computing the ideal class group and solving the PIP that 
do not improve the theoretical complexity, but that have a significant impact on the performances of our methods. 

Finally, we present a PIP resolution method that leverages a precomputation on $\Q(\zeta_{p^s})$ of a higher cost 
than  $2^{O(n^{1/2+\varepsilon})}$. Using this precomputation, we achieve a better heuristic complexity than $2^{O(n^{1/2})}$ when 
solving all subsequent PIP instances for which $\Nm(I)\leq 2^{n^b}$ in $\Q(\zeta_{p^s})$. 
More specifically, by spending a precomputation time in $2^{O(n^{2-3a+\varepsilon})}$ for an arbitrarily small $\varepsilon>0$, we can solve the 
PIP on input ideals $I$ with $\Nm(I)\leq 2^{n^b}$ in time $2^{O\left(n^{a + o(1)}\right)}$ when 
\begin{enumerate}
 \item $b\leq 7a - 2$.
 \item $\frac{2}{5} < a < \frac{1}{2}$. 
\end{enumerate} 
%Combined 
%with the recent heuristic reduction from $\gamma$-SVP to the short PIP of Cramer et al.~\cite{Stickelberger} (under the same assumptions 
%on the class group), t
This yields a 
heuristic algorithm for $\gamma$-SVP in principal ideals $I$ of $\Q(\zeta_{p^s})$ satisfying $\Nm(I)\leq 2^{n^b}$ with precomputation on $\Q(\zeta_{p^s})$ 
for $\gamma\in e^{\tilde{O}(\sqrt{n})}$ with a better trade-off approximation/cost than BKZ.  For 
example:
\begin{itemize}
 \item With a precomputation of cost $2^{O(n^{5/7 + \varepsilon})}$ on $\Q(\zeta_{p^s})$, the short generators of principal ideals $I$ of 
$\Q(\zeta_{p^s})$ satisfying $\Nm(I)\leq 2^{n+o(1)}$ can be found in time $2^{O(n^{3/7+o(1)})}$.
\item Given a $2^{O(n^{5/7 + \varepsilon})}$ 
precomputation, 
the private keys of the multilinear maps of Garg, Gentry and Halevi~\cite{mult_maps} can be retrieved 
in time $2^{O(n^{3/7+o(1)})}$ from the corresponding public keys. 
\end{itemize}

\section{Background}

\paragraph{\textbf{Lattices}} A lattice is a discrete additive subgroup of $\R^n$ for some integer $n$. The first 
minima of a lattice $\mathcal{L}$ is defined by $\lambda_1 := \min_{\vec{v}\in\mathcal{L}\setminus \{0\}}\|\vec{v}\|$. 
A basis of $\mathcal{L}$ is a set of linearly independent vectors $\vec{b}_1,\cdots,\vec{b}_k$ such that 
$\mathcal{L} = \Z \vec{b}_1 + \cdots + \Z \vec{b}_k$. The determinant of $\mathcal{L}$ is $\det(\mathcal{L}) = \sqrt{\det(B\cdot B^T)}$ where 
$B = (\vec{b}_i)_{i\leq k}\in\R^{k\times n}$ is the matrix of a basis of $\mathcal{L}$. For a full dimensional lattice 
$\mathcal{L}$, the best upper bound we know on $\lambda_1(\mathcal{L})$ is in $O\left( \sqrt{n}\det(\mathcal{L})^{1/n}\right)$. 
The problem of finding a shortest vector $v\in\mathcal{L}$ is known as the Shortest Vector Problem (SVP), while 
the problem of finding $v\in\mathcal{L}$ such that $\|\vec{v}\|\leq \gamma \lambda_1(\mathcal{L})$ for 
some $\gamma\geq 1$ is known has $\gamma$-SVP. A solution $\vec{v}$ to $\gamma$-SVP satisfies $\|\vec{v}\|\in O\left( \gamma\sqrt{n}\det(\mathcal{L})^{1/n}\right)$. 
 Given the matrix of a basis $A$ as 
input, the LLL algorithm~\cite{LLL} returns a basis $(\vec{b}_i)_{i\leq n}$ such that $\frac{\|\vec{b}_1\|}{\det(\mathcal{L})^{1/n}}\in 2^{O(n)}$ 
in polynomial time in $n$ and $\log(|A|)$. The BKZ algorithm~\cite{Ajtai_BKZ} with block size $k$ returns 
a basis $(\vec{b}_i)_{i\leq n}$ such that $\frac{\|\vec{b}_1\|}{\det(\mathcal{L})^{1/n}}\in O(k^{n/k})$ in 
time $2^{O(k)}\operatorname{Poly}(n,\log(|A|)$. Finally, the HKZ algorithm returns a basis $(\vec{b}_i)_{i\leq n}$ such 
that $\frac{\|\vec{b}_1\|}{\det(\mathcal{L})^{1/n}}\in O(\sqrt{n})$ 
in time $2^{O(n)}$. 

\paragraph{\textbf{Number fields}}  A number field $K$ is a finite extension of $\Q$. Its ring of integers $\OK$ has the structure 
of a $\Z$-lattice of degree $n=[K:\Q]$, and the orders $\OO\subseteq\OK$ are the sublattices of $\OK$ 
which have degree $n$ and which are equipped with a ring structure. A number field 
has $r_1\leq n$ real embeddings $(\sigma_i)_{i\leq r_1}$ and $2r_2$ complex 
embeddings $(\sigma_i)_{r_1 < i \leq 2r_2}$ (coming as $r_2$ pairs of conjugates). The field $K$ is isomorphic to 
$\OK\otimes\Q$ where $\OK$ denotes the ring of integers of $K$. We can embed $K$ in 
$K_\R := K\otimes \R \simeq \R^{r_1}\times \C^{r_2}, $ 
and extend the $\sigma_i$'s to $K_\R$. Let $T_2$ be the Hermitian form on $K_\R$ defined by 
$T_2(x,x') := \sum_i \sigma_i(x)\overline{\sigma_i}(x')$, 
and let $\| x\| := \sqrt{T_2(x,x)}$ be the corresponding $L_2$-norm. The norm of an element $x\in K$ is defined by $\Nm(x) = \prod_i\sigma_i(x)$. 
Let $(\alpha_i)_{i\leq d}$ such that 
$\OK = \oplus_i \Z\alpha_i$, then the discriminant of $K$ is given by $\Delta = \det^2(T_2(\alpha_i,\alpha_j))$. 
The volume of the fundamental domain is $\sqrt{|\Delta|}$, and the size of the input of algorithms 
working on an integral basis of $\OK$ is in $O(\log(|\Delta|))$. In $K = \Q(\zeta_{p^s})$, the degree satisfies 
$[K:\Q] = (p-1)p^{s-1}$ and $\Delta = \pm p^{p^{s-1}(ps - s - 1)}$, therefore $\log(|\Delta|)\sim n\log(n)$ 
and we can express the complexity of our algorithms in terms of $n$ (a choice we made in this paper), 
which makes it easier to compare with other lattice reduction result. However, most of the literature on class group 
computation presents complexities in terms of $\log(|\Delta|)$, which is in general the right value to measure the 
input. For example, it makes no sense to express the complexity with respect to the degree of $K$ in infinite 
classes of quadratic number number fields.

\paragraph{\textbf{Cyclotomic fields}} A cyclotomic field is an extension of $\Q$ of the form $K = \Q(\zeta_N)$ where $\zeta_N = e^{2i\pi/N}$ is 
a primitive $N$-th root of unity. The ring of integers $\OK$ of $K$ is 
$\Z[X]/(\Phi_N(X))$ where $\Phi_N$ is the $N$-th cyclotomic polynomial. When $N$ is a power of two, 
$\Phi_N(X) = X^{N/2} + 1$, and when $N = p^s$ is a power of $p>2$, we have 
$\Phi_N(X) = X^{p^{s-1}(p-1)} + X^{p^{s-1}(p-2)} + \cdots + 1$ (which of course generalizes the case $p = 2$). 
Elements $a\in \OK$ are residues of polynomials in $\Z[X]$ modulo $\Phi_N(X)$, and can be identified 
with their coefficient vectors $\vec{a}\in\Z^{\phi(N)}$ where $\phi(N) = p^{s-1}(p-1)$ is the 
Euler totient of $N$ (and the degree of $\Phi_N(X)$). 

\paragraph{\textbf{The ideal class group}} Elements of the form $\frac{I}{d}$ where $I\subseteq \OK$ is an (integral) ideal of 
the ring of integers of $K$ and $d > 0$ are called fractional ideals. Like the orders of $K$, they have the 
structure of a $\Z$-lattice of degree $n=[K;\Q]$, and they form a multiplicative group $\mathcal{I}$.  Elements of $\mathcal{I}$ 
admit a unique decomposition as a power product of prime ideals of $\OK$ (with 
possibly negative exponents). The norm of 
integral ideals is given by $\Nm(I) := [\OK:I]$, which extends to fractional 
ideals by $\Nm(I/J) := \Nm(I)/\Nm(J)$. The norm of a principal (fractional) ideal agrees with the norm
of its generator $\Nm(x\OK) = |\Nm(x)|$. The principal fractional ideals $\mathcal{P}$ of $K$ are a subgroup of $\mathcal{P}$ 
and ideal class group of $\OK$ is defined by 
$\Cl(\OK) := \mathcal{I}/\mathcal{P}.$ We denote by $[\ag]$ the class of a fractional $\ag$ in $\Cl(\OK)$ and by $h$ the 
cardinality of $\Cl(\OK)$ which is a finite group. In $\Cl(\OK)$ we identify two fractional ideals $\ag,\bg$ if there is $\alpha\in K$ 
such that $\ag = (\alpha)\bg$. This is denoted by $\ag\sim \bg$.

\paragraph{\textbf{Units of $\OK$}} Elements $u\in\OK$ that are invertible in $\OK$ are called units. Equivalently, they 
are the elements $u\in\OK$ such that $(u)\OK = \OK$ and also such that $\Nm(u) = \pm 1$. The unit group of $\OK$ 
where $K$ is a cyclotomic field has rank $r = n/2-1$ and has the form 
$\OK^* = \mu\times \langle\epsilon_1\rangle\times \cdots\times \langle\epsilon_r\rangle$ where $\mu$ are roots of 
unity (torsion units) and the $\epsilon_i$ are non-torsion units. Such $(\epsilon_i)_{i\leq r}$ are called a system 
of fundamental units of $\OK$. Units generate a lattice $\mathcal{L}$ of rank $r$ in $\R^{r+1}$ via the embedding 
$$x\in K\longmapsto \Log(x) := \left( \ln(|\sigma_1(x)|) , \cdots, \ln(|\sigma_{r+1}(x)|)\right)$$
where the complex embeddings $(\sigma_i)_{i\leq n}$ are ordered such that the first $r = n/2$ ones are not 
conjugates of each other. The volume $R$ of $\mathcal{L}$ is an invariant of $K$ called the regulator. 
The regulator $R$ and the class number $h$ satisfy 
$hR = \frac{|\mu|\sqrt{|\Delta|}}{2^{r_1}(2\pi)^{r_2}}\lim_{s\rightarrow 1} \left( (s-1)\zeta_\K (s)\right),$
where $\zeta_\K (s) = \sum_\ag \frac{1}{\Nm(\ag)^s}$ is the usual $\zeta$-function associated to $K$ and 
$|\mu|$ is the cardinality of $\mu$ the group of torsion units. This 
allows us to derive a bound $h^*$ in polynomial time under GRH that satisfies $h^* \leq  hR < 2h^*$ (\cite{BachEulerProd}). 
When $K = \Q(\zeta{p^s})$, logarithm vectors of units of the form $u_j = \frac{\zeta_{p^s}^j - 1}{\zeta_{p^s} - 1}$ for $j\in\Z_{p^s}^*$ 
together with $\mu$ generate a sublattice of $\mathcal{L}$ of index $h^+(p^s)$ where $h^+(N)$ is the class number of the 
maximal real subfield of $\Q(\zeta_N)$~\cite[Lemma 8.1]{Washington}. It is conjectured  
that $h^+(2^s)=1$ (Weber class number problem) and  that 
$h^+(p^s)$ remains bounded for fixed $p$ and increasing $s$ (see~\cite{Pomerance_conjecture}). 

\paragraph{\textbf{Notations}} Throughout this paper, $\|A\| = \max_{i,j}|A_{i,j}|$ denotes the infinite norm of a matrix. 
We denote by $\ln(x)$ the natural logarithm of $x$ and by $\log(x)$ its base-$2$ logarithm. 

\section{High level description of the algorithm}

In cryptosystems based on the hardness of finding a short generator of a principal ideal (in particular~\cite{fre_smart} 
and~\cite{mult_maps}), the secret key is a small generator $g\in K = \Q(\zeta_N)$ of a principal ideal $\ag\subseteq\OK$, and 
the public parameters include a $\Z$-basis of $\ag$. We present a subexponential method for retrieving a generator of an 
input ideal that is asymptotically faster than the current state of the art~\cite{biasse_subexp,ANTS_XI}. Combined with the 
Bounded Distance Decoding method in $\Log(\Z[\zeta_{N}]^*)$ presented in~\cite{CVP_cyclo}, this yields a 
better classical attack against the schemes relying the hardness of finding a short generator in a principal ideal. 
We also show how to leverage the precomputation of a large set of relations between generators of $\Cl(\OK)$ to 
solve the PIP in $\OK$ in time $2^{O(n^{a})}$ for $a < 1/2$. This yields an attack in time $2^{n^{3/7+o(1)}}$ against the 
multilinear maps of Garg, Gentry and Halevi~\cite{mult_maps} with a precomputation of time $2^{n^{5/7+\varepsilon}}$ 
on the field. 
\begin{comment}
Combined with the techniques of~\cite{Stickelberger}, 
this provides a method for solving $\gamma$-SVP in ideal lattices of $K$ for $\gamma \in e^{\tilde{O}(n^{1/2})}$ 
with a better trade-off approximation/cost  than BKZ. 
\end{comment}

\paragraph{\textbf{Solving the Principal Ideal Problem}}

All subexponential time algorithms for solving the Principal Ideal Problem follow the same high level strategy. They derive from the 
subexponential algorithm for classes of fixed degree number fields~\cite{Buchmann}, which itself is a generalization of the algorithm 
of Hafner and McCurley~\cite{hafner} for quadratic number fields. Let $B>0$ be a smoothness bound and 
$\mathcal{B}:= \{ \text{prime ideals }\pg \text{ with } \Nm(\pg)\leq B\}$. We first need to compute a generating 
set of the lattice $\Lambda$ of all the vectors $(e_1,\cdots,e_m)\in\Z^m$ such that 
\begin{equation}\label{eq:rel}
\exists \alpha\in K, \ \ (\alpha)\OK = \pg_1^{e_1}\cdots\pg_m^{e_m},
\end{equation}
where $m = |\mathcal{B}|$. When $B>12\ln^2|\Delta|$, the classes of ideals in $\mathcal{B}$ generate $\Cl(\OK)$ 
under the GRH~\cite[Th. 4]{BBounds}. 
Therefore, $(\mathcal{B},\Lambda)$ is a presentation of the group $\Cl(\OK)$ and the search for a generating set of the 
relations of the form~\eqref{eq:rel} is equivalent to computing the group structure of $\Cl(\OK)$. Indeed, the morphism 
$$\begin{CD}
\Z^m @>{\varphi}>> \mathcal{I} @>{\pi}>> \Cl(\OK)\\
(e_1, \ldots, e_m) @>>> \prod_i\p_i^{e_i} @>>> \prod_i[\p_i]^{e_i}
\end{CD},$$
is surjective, and the class group  $\Cl(\OK)$ is isomorphic to $\Z^m/\ker(\pi\circ\varphi) = \Z^m/\Lambda.$ 
If we only compute a generating set for a sublattice of $\Lambda$ of finite 
index, then the cardinality of the tentative class group we obtain is a multiple of $h =|\Cl(\OK)|$. This can 
be tested by using an estimate of $hR$ given by the methods of~\cite{BachEulerProd} in polynomial time under the GRH. 
If we are missing relations to generate $\Lambda$, we obtain a multiple of $hR$, and this tells us we need 
to collect more relations. 
Let $\ag$ be an input ideal. We find an extra relation of the form $\ag = (\alpha)\pg_1^{y_1}\cdots\pg_m^{y_m}.$ 
The input ideal $\ag$ is principal, if and only if $\pg_1^{y_1}\cdots\pg_m^{y_m}$ is principal too. Then, assuming we have a generating 
set for the lattice $\Lambda$ of relations of the form~\eqref{eq:rel}, we can rewrite $\pg_1^{y_1}\cdots\pg_m^{y_m}$ as a power-product 
of the relations generating $\Lambda$. More specifically, if the relations $(\alpha_i)\OK = \pg_1^{e_{i,1}}\cdots\pg_m^{e_{i,m}},\ \ \text{for } i\leq m$ 
generate $\Lambda$, then there is $x\in\Z^m$ such that $xA=y$ for $A = (e_{i,j})_{i,j\leq m}$. Then 
$\pg_1^{y_1}\cdots\pg_m^{y_m} = (\alpha_1^{x_1}\cdots\alpha_m^{x_m})\cdot\OK$, which gives us a generator of $\ag$, $\ag = (\alpha\cdot \alpha_1^{x_1}\cdots\alpha_m^{x_m})\cdot\OK.$

\begin{algorithm}[ht]
\caption{Principal Ideal Problem}
\begin{algorithmic}[1]\label{alg:PIP}
\REQUIRE Ideal $\ag\subseteq\OK$, smoothness bound $B$.
\ENSURE $\operatorname{false}$ or $\beta\in K$ such that $\ag = (\beta)\OK$.
\STATE Compute $\mathcal{B} = \left\lbrace \p_1,\hdots,\p_m\right\rbrace$ of norm less than $B$.
\STATE Compute a lattice $\Lambda\subseteq\Z^m$ of random relations between elements of $\mathcal{B}$. 
\STATE $h\leftarrow \left| \Z^m/\Lambda\right|$, $R\leftarrow \text{regulator of }K$.
\STATE Check $h\cdot R$ with the estimates of~\cite{ANTS_XI}[Sec. 4.3]. Find more relations if necessary.
\STATE $A \leftarrow (e_{i,j})_{i,j\leq m}$, where $(\alpha_i)=\prod_i\pg_j^{e_{i,j}}$ are a basis of $\Lambda$.
\STATE Find $\vec{y}\in\Z^m$ such that $\ag = (\alpha)\pg_1^{y_1}\cdots\pg_m^{y_m}.$
\STATE Solve $\vec{x}A=\vec{y}$. 
\RETURN $\operatorname{false}$ if no solution or $\alpha\cdot\alpha_1^{x_1},\hdots,\alpha_{m}^{x_m}$.
\end{algorithmic}
\end{algorithm}

\begin{theorem}
 Algorithm~\ref{alg:PIP} runs in heuristic complexity $2^{O(n^{1/2+\varepsilon})}$ for 
 arbitrarily small $\varepsilon>0$ when 
 $B = 2^{n^{1/2}}$ with $K = \Q(\zeta_{p^s})$ and $n = [K:\Q]$.
\end{theorem}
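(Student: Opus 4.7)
The plan is to instantiate the classical Buchmann--Hafner--McCurley subexponential framework with parameters tailored to $\Q(\zeta_{p^s})$, exploiting the fact that $\Phi_{p^s}$ has $\{0,1\}$-coefficients so that all of its roots lie on the unit circle. I would fix $B = 2^{n^{1/2}}$ and form the factor base $\mathcal{B}$ in Step~1; by the prime ideal theorem $m = |\mathcal{B}| = O(B/\ln B) = 2^{O(n^{1/2})}$, and since $B > 12 \ln^2|\Delta| = \tilde{O}(n^2)$, the classes of $\mathcal{B}$ generate $\Cl(\OK)$ under GRH by~\cite{BBounds}, so the output of the relation phase is a full presentation of $\Cl(\OK)$.

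For Step~2 I would sample integer exponent vectors $\vec e$ with $\operatorname{poly}(n)$-bounded entries, form $\bg = \prod_i \pg_i^{e_i}$, LLL-reduce a $\Z$-basis of $\bg$ in the coefficient embedding, and let $\alpha \in \bg$ be the first vector. The crucial estimate is then $|\Nm(\alpha)/\Nm(\bg)|$: since every root $\rho$ of $\Phi_{p^s}$ satisfies $|\rho| = 1$, one has $|\Nm(\alpha)| \leq (n \|\alpha\|_\infty)^n$ where $\|\alpha\|_\infty$ is the $\ell_\infty$-norm of the coefficient vector, and LLL guarantees $\|\alpha\|_\infty \leq 2^{O(n)} \Nm(\bg)^{1/n}$. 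Combining yields $|\Nm(\alpha)/\Nm(\bg)| \leq 2^{O(n \log n)}$. This bound, which relies entirely on the small height of the defining polynomial, is the technical heart of the proof: without it the analysis collapses to the generic Biasse--Fieker estimate in $L_\Delta(2/3 + \varepsilon)$, which is the main obstacle one has to overcome.

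By the smoothness heuristic adopted throughout the paper (an analogue of Canfield--Erd\H{o}s--Pomerance for principal ideals), the probability that the integral ideal $\alpha\OK\bg^{-1}$ is $B$-smooth is at least $u^{-u(1+o(1))}$ with $u = \log|\Nm(\alpha)/\Nm(\bg)|/\log B = O(n^{1/2} \log n)$, so this probability is $2^{-O(n^{1/2} \log^2 n)} = 2^{-O(n^{1/2 + \varepsilon})}$ for any $\varepsilon > 0$. Collecting $\Theta(m) = 2^{O(n^{1/2})}$ relations, each at polynomial cost per trial (LLL and smoothness testing of a $B$-smooth integer in $\Z$), therefore totals $2^{O(n^{1/2 + \varepsilon})}$.

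Finally, the resulting $m \times m$ relation matrix has entries of bitsize $\operatorname{poly}(n)$, so its Hermite normal form and the GRH-conditional check $h^* \leq hR < 2h^*$ of~\cite{BachEulerProd} both fit within $2^{O(n^{1/2 + \varepsilon})}$. Steps~6--7 produce a relation $\ag = (\alpha)\prod_i \pg_i^{y_i}$ for the input ideal and solve $\vec x A = \vec y$ using exactly the same sampling and norm bounds as above, again at cost $2^{O(n^{1/2 + \varepsilon})}$. Summing the four phases gives the announced heuristic complexity $2^{O(n^{1/2 + \varepsilon})}$ for Algorithm~\ref{alg:PIP}.
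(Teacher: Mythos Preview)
Your argument has a genuine arithmetic gap in the norm estimate. You write $\|\alpha\|_\infty \leq 2^{O(n)}\Nm(\bg)^{1/n}$ (the LLL approximation factor) and $|\Nm(\alpha)|\leq (n\|\alpha\|_\infty)^n$, and then conclude $|\Nm(\alpha)/\Nm(\bg)|\leq 2^{O(n\log n)}$. But substituting gives
\[
|\Nm(\alpha)| \;\leq\; \bigl(n\cdot 2^{O(n)}\cdot \Nm(\bg)^{1/n}\bigr)^n \;=\; n^n\cdot 2^{O(n^2)}\cdot \Nm(\bg),
\]
so the ratio is only bounded by $2^{O(n^2)}$, not $2^{O(n\log n)}$. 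The exponential LLL factor, once raised to the $n$-th power in the norm, swamps everything. With this corrected bound you get $u = O(n^2)/n^{1/2} = O(n^{3/2})$, smoothness probability $2^{-\tilde O(n^{3/2})}$, and the whole analysis collapses to something far worse than $2^{O(n^{1/2+\varepsilon})}$.

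The paper avoids this in two distinct ways. For Step~2 (relation collection) it does \emph{not} LLL-reduce a random $\mathcal{B}$-power product; it samples $\alpha=\sum_i a_i\zeta_{p^s}^i$ directly with coefficients $a_i\in[-A,A]$ for a \emph{constant} $A$, so $|\Nm(\alpha)|\leq n^nA^n=2^{O(n\log n)}$ with no lattice reduction at all (Proposition~\ref{prop:class_group}). For Step~6 (decomposing the input ideal $\ag$) direct sampling is unavailable, and LLL on the full rank-$n$ ideal lattice fails for the reason above; instead the paper runs a $\qg$-descent (Section~\ref{sec:q_desc}): at each stage it HKZ-reduces a sublattice $\mathcal{L}_I$ of dimension $k=n^{1/2+\varepsilon}$, paying $2^{O(k)}$ time for an approximation factor $\sqrt{k}$ rather than $2^{O(n)}$, and iterates a constant number of levels down to smoothness bound $2^{n^{1/2}}$. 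Your proposal is missing both of these ingredients; in particular, the claim that Steps~6--7 go through ``using exactly the same sampling and norm bounds as above'' is precisely where the $\qg$-descent is needed and where plain LLL cannot substitute.
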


\paragraph{\textbf{PIP with precomputation on $K$}}

Algorithm~\ref{alg:PIP} can be divided into two steps: the computation of a basis of the lattice of 
relations between the classes of a generating set for $\Cl(\OK)$, and the decomposition of the input 
ideal $\ag$ with respect to this generating set. The computation of relations between generators of the 
ideal class group is independent from the specific instances of the PIP and can be precomputed. Moreover, 
the larger the smoothness bound $B$ is, the faster the decomposition of an input ideal over $\mathcal{B}$ is. 
Naturally, this implies that the precomputation is more expensive since it requires the calculation of more 
relations.

\begin{theorem}
Let $a,b>0$ such that $b\leq 7a-2$ and $\frac{2}{5} < a < \frac{1}{2}$. 
If we precompute all relations between ideals of norm less than $B \in 2^{O(n^{2-3a+\varepsilon})}$ for some some $\varepsilon>0$, 
then the cost of solving the PIP for input ideals $I$ with $\Nm(I)\leq 2^{n^b}$ is in  $2^{O\left(n^{a + o(1)}\right)}$ 
while the cost of the precomputation is in $2^{O(n^{2-3a+\varepsilon})}$. 
\end{theorem}
For example, for $a = 3/7$ and $b = 1 + o(1)$, a precomputation of all relations between prime ideals of norm less than $B \in 2^{O(n^{5/7 + \varepsilon})}$ 
for some $\varepsilon > 0$ 
can be done in time $2^{O(n^{5/7 + \varepsilon})}$. It allows us to solve all instances of the PIP on input $I\subseteq\OK$ such that $\Nm(I)\leq 2^{n^{1+o(1)}}$ in time 
$2^{O(n^{3/7 + o(1)})}$.

\paragraph{\textbf{Reducing the short-PIP and $\gamma$-SVP to the PIP}}

Assume that the input ideal $\ag\subseteq\OK$ is generated by a short element $g$, and that we have computed $\alpha\in\OK$ 
such that $\ag = (\alpha)\cdot\OK$. 
Given a generating set $\gamma_1,\cdots,\gamma_r$ of the unit group $\OK^*$, all 
generators $g'$ of $\ag$ are of the form 
$$g' = \alpha\cdot \gamma_1^{x_1}\cdots\gamma_r^{x_r}\ \ \text{for some } (x_1,\cdots,x_r)\in\Z^r.$$
The problem of finding $g$ (or another short generator, which is equivalent for the sake of a cryptanalysis), 
boils down to finding $(x_1,\cdots,x_r)$ such that $\alpha\cdot \gamma_1^{x_1}\cdots\gamma_r^{x_r}$ is short. In the 
lattice of logarithm embeddings, this can be done by finding $(x_1,\cdots,x_r)$ such that 
$\| \Log(\alpha) - \sum_i x_i \Log(\gamma_i)\|$ is small. To do this, we find the closest vector to $\Log(\alpha)$ 
in the lattice $\mathcal{L}:= \Z\Log(\gamma_1)+\cdots+\Z\Log(\gamma_r)$. 

The closest vector problem is a notoriously hard problem without prior knowledge on the properties of the lattice and the 
target vector. In our situation, things are made easier by the knowledge of the distribution of the target vector (given in 
the description of the cryptosystems~\cite{fre_smart,mult_maps}) and of a good basis for $\mathcal{L}$. Indeed, the decryption 
algorithm of schemes relying on the hardness of the short-PIP works under the assumption that the generator $g$ is small. This 
means that the target vector $\Log(\alpha)$ is very close to the lattice $\mathcal{L}$. Our instance of the Closest Vector Problem 
therefore turns into an instance of the Bounded Distance Decoding problem (BDD) since we have a bound on 
$\dist(\Log(\alpha),\mathcal{L})$. Moreover, we know a very good basis $\Log(\gamma_i)$ for $\mathcal{L}$ with which  
Babai's nearest plane algorithm~\cite{barbai} returns the correct value.

\begin{algorithm}[ht]
\caption{short-PIP to PIP reduction}
\begin{algorithmic}[1]\label{alg:CVP}
\REQUIRE A generator $\alpha\in\OK$ of the ideal $\ag\subseteq\OK$.
\ENSURE A short generator of $\ag$. 
\STATE Let $(\gamma_i)_{i\leq r}$ be the cyclotomic units of $K$.
\STATE $\mathcal{L}\leftarrow \Z\Log(\gamma_1)+\cdots\Z\Log(\gamma_r)$.
\STATE Find the closest vector $\sum_i x_i\Log(\gamma_i)\in\mathcal{L}$ to $\Log(\alpha)$ by using Babai's round-off method. 
\RETURN $\frac{\alpha}{\prod_i\gamma_i^{x_i}}$.
\end{algorithmic}
\end{algorithm}

\begin{theorem}[Th. 4.1 of~\cite{CVP_cyclo}]
When $K = \Q(\zeta_{p^s})$, Algorithm~\ref{alg:CVP} runs in polynomial time.
\end{theorem}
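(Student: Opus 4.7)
The plan is to show that each of the three substantive steps of Algorithm~\ref{alg:CVP} runs in time polynomial in $n = [K:\Q]$ and in the bit-size of the input generator $\alpha$, and that the precision needed throughout is only polynomial.

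First I would handle the construction of the lattice $\mathcal{L}$. When $K = \Q(\zeta_{p^s})$ the cyclotomic units admit the closed form $u_j = (\zeta_{p^s}^j - 1)/(\zeta_{p^s}-1)$ for $j \in \Z_{p^s}^*$, so the $\gamma_i$ can be written down in time polynomial in $n$. The logarithmic embeddings $\Log(\gamma_i)$ have entries of the shape $\ln\bigl|\sigma_k(u_j)\bigr| = \ln\bigl|\zeta_{p^s}^{jk}-1\bigr|-\ln\bigl|\zeta_{p^s}^k-1\bigr|$, and one checks (as in~\cite{Washington}) that each coordinate is bounded polynomially in $n$, so they can be evaluated to any polynomial precision $\pi$ in time $\operatorname{Poly}(n,\pi)$. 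Forming the basis matrix $B$ of $\mathcal{L}$ is therefore polynomial.

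Next I would analyse the round-off step. Babai's round-off on input $\Log(\alpha)$ amounts to computing $\lfloor (B^{-1})^T \Log(\alpha)\rceil$, which is a single matrix-vector product followed by coordinate-wise rounding; with $B$ of size $r\times (r+1)$ and entries of polynomial bit-length this is clearly polynomial in $n$ and in the precision used. The only non-routine question is the precision required for the answer to be correct: one needs the computed value of $(B^{-1})^T \Log(\alpha)$ to be within $1/2$ of the true one, coordinate-wise. Because the dual basis of the log-cyclotomic basis in $\Q(\zeta_{p^s})$ has each vector of norm only $\tilde{O}(n^{-1/2})$ (this is the content of~\cite{CVP_cyclo} used for the correctness analysis), a bound on $\|\Log(\alpha)\|$ polynomial in the input length and in $n$ translates into a polynomial precision requirement. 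Computing $\Log(\alpha)$ itself to this precision is again $\operatorname{Poly}(n,\log\Nm(\alpha))$ since the Archimedean absolute values of $\alpha$ are polynomially bounded in the input size.

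The main obstacle is exactly this precision/accuracy estimate: one must argue that the rounding in the round-off step commits only a small error relative to the gaps of the underlying integer lattice. The key ingredient, already supplied by the CDPR analysis invoked in the theorem statement, is the explicit bound on the norms of the dual cyclotomic-unit vectors in $\Q(\zeta_{p^s})$. With this bound in hand, an elementary calculation propagates a $\operatorname{Poly}(n)$ working precision through the linear algebra to ensure that the nearest-plane (here, round-off) step identifies the correct coefficient vector $(x_i)_{i\leq r}\in\Z^r$.

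Finally, forming $\prod_i \gamma_i^{x_i}$ and dividing $\alpha$ by it is a product of $r = n/2-1$ elements with exponents of polynomial bit-length; using repeated squaring in $\OK \simeq \Z[X]/(\Phi_{p^s}(X))$ this is polynomial in $n$ and in the bit-length of the $x_i$. Combining the four polynomial-time bounds gives the announced overall polynomial complexity.
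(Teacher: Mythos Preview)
The paper does not actually supply a proof of this theorem: it is stated purely as a citation of Theorem~4.1 of~\cite{CVP_cyclo}, with no argument given in the body of the paper. So there is nothing to compare your proposal against on the paper's side.

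That said, your sketch is a faithful outline of the CDPR argument that the citation points to. The essential ingredient you identify --- the explicit $\tilde{O}(n^{-1/2})$ bound on the dual vectors of the log-cyclotomic-unit basis, which controls the error in Babai's round-off and hence the required working precision --- is exactly the content of~\cite{CVP_cyclo} that makes the algorithm polynomial. One small remark: your final step bounds the cost of forming $\alpha/\prod_i \gamma_i^{x_i}$ by repeated squaring, but you should also note that the bit-size of this element stays polynomial (because the resulting generator is short by construction), otherwise the division in $\OK$ could in principle blow up; this is implicit in the correctness guarantee but worth stating. Apart from that, your write-up would serve perfectly well as the missing justification the paper omits.
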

A short generator of a principal ideal in $K = \Q(\zeta_{p^s})$ yields a solution to $\gamma$-SVP for $\gamma\in e^{\tilde{O}(\sqrt{n})}$. 
\begin{comment}
Moreover, under reasonable assumptions on the ideal class group, 
given an arbitrary input ideal $I\subseteq\OK$, the heuristic methods of~\cite{Stickelberger} allow us to find an ideal $J$ 
with $\Nm(J)\in 2^{O(n^{3/2})}$ such that $IJ$ is principal. Then a short generator of $IJ$ is a solution to $\gamma$-SVP for $I$ with $\gamma\in e^{\tilde{O}(\sqrt{n})}$. 
The close principal multiple algorithm of~\cite{Stickelberger} uses the decomposition of an input ideal on a short generating set 
of the ideal class group.
\end{comment}
This can be done in quantum polynomial time using~\cite{SODA16}, or in time $2^{O(n^{a + o(1)})}$ 
given a precomputation of cost in $2^{O(n^{2-3a+\varepsilon})}$ for arbitrarily small $\varepsilon > 0$. 

\begin{theorem}
Let $a,b>0$ such that $b\leq 7a-2$ and $\frac{2}{5} < a < \frac{1}{2}$. 
If we precompute all relations between ideals of norm less than $B \in 2^{O(n^{2-3a+\varepsilon})}$ for some some $0 < \varepsilon$, 
then the cost of finding a short generator (and solving $\gamma$-SVP with $\gamma\in e^{\tilde{O}(\sqrt{n})}$ for input ideals $I$ with $\Nm(I)\leq 2^{n^b}$ is in  $2^{O\left(n^{a + o(1)}\right)}$ 
while the cost of the precomputation is in $2^{O(n^{2-3a+\varepsilon})}$. 
\end{theorem}

\section{Smoothness of ideals}\label{sec:smoothness} 

Given $B>0$, the expected time to find a relation of the form $(\alpha) = \pg_1^{e_1}\cdots\pg_m^{e_m}$ 
where $\Nm(\pg_i)\leq B$ depends on the probability that a random ideal $\ag$ of bounded norm is 
$B$-smooth, that is to say of the form $\ag = \pg_1^{e_1}\cdots\pg_m^{e_m}$. In~\cite{Scourfield}, 
Scourfield established a result on the smoothness of ideals in a number field 
comparable to the ones known on integers. Let 
$$\Psi(x,y) := \left| \{ \ag\subseteq \OK, \Nm(\ag)\leq x, \ag\ y\text{--smooth}\} \right|,$$ 
and $\varepsilon > 0$, then $\frac{\Psi(x,y)}{x}\sim \lambda_K\rho(u)$, where $u=\frac{\ln(x)}{\ln(y)}$, 
$\rho$ is the Dickman function, $\lambda_K$ is the residue of the zeta function $\zeta_K(s)$ at 
$s=1$ and 
$(\ln\ln(x))^{\frac{5}{3}+\varepsilon}\leq \ln(y)\leq \ln(x),\ x \geq x_0(\varepsilon)$ 
for some $x_0(\varepsilon)$. In the case where $K$ is normal and $\frac{n}{\ln|\Delta|}\rightarrow 0$, 
$\lambda_K$ can be bounded absolutely, but there is no such result in the general case. 
During our relation search algorithm, we draw principal ideals at random. There is no known 
analogue of Sourfield's result for restricted classes of ideals. This is one of the reasons why the complexity of the number field 
sieve~\cite{NF_sieve} is only heuristic. 
We therefore 
rely on the following heuristic for the smoothness of ideals. 

\begin{heuristic}\label{heuristic:smoothness_ideals}
We assume that the probability $P(\iota,\mu)$ that a principal ideal of $\OK$ of norm bounded by $\iota$ 
is a power-product of prime ideals of norm bounded by $\mu$ satisfies
\begin{equation}\label{eq:smooth_ideal}
P(\iota,\mu)\geq e^{\left( -u \ln u (1+o(1))\right)},\ \text{for }u = \ln(\iota) / \ln(\mu).
\end{equation}
\end{heuristic}

\section{Computation of $\Cl(\OK)$}\label{sec:CL_group}

In this section, we show how to compute $\Cl(\OK)$ where $K = \Q(\zeta_{p^s})$ is 
a cyclotomic field of prime power conductor in time $L_\Delta(1/2,c)$ for some constant $c>0$ 
and $\Delta = \disc(K)$ 
under Heuristic~\ref{heuristic:smoothness_ideals}. The best known heuristic complexity 
for fields of degree $n\in \tilde{\Theta}(\log(|\Delta|))$ is in $2^{O(n^{2/3+\varepsilon})}$ 
%$L_\Delta(2/3+\varepsilon,d)$ for $d> 0 $ a constant and 
where $\varepsilon>0$ is an  
arbitrarily small constant~\cite{ANTS_XI}. Cyclotomic fields of prime power conductor 
have a defining polynomial 
with height 1, which allows us to use a different technique than the one described 
in~\cite{ANTS_XI}. All we have to do is to draw elements $\alpha\in\OK$ with small 
coefficients on the power basis $1,\zeta_{p^s},\cdots,\zeta_{p^s}^{n-1}$ and test them for smoothness 
with respect to a factor basis $\mathcal{B} = \{ \pg\ \mid \ \Nm(\pg)\leq B\}$ for 
some smoothness bound $B > 0$. The smoothness test is simply done by checking 
if $\Nm(\alpha)$ is $B$-smooth as an integer using either a factoring algorithm~\cite{NF_sieve,pomerance_qs} 
or a dedicated smoothness test algorithm~\cite{bernstein}. Every time we have a relation of the form 
$$(\alpha) = \pg_1^{e_1}\cdots\pg_m^{e_m},$$
we store the vector $(e_1,\cdots,e_m)$ in the rows of a matrix $M$. Once enough relations are found, we 
complete the computation as in all previous subexponential methods~\cite{BSub,HMSub,ANTS_XI} by processing the matrix 
$M$. 

\begin{algorithm}[ht]
\caption{Computation of the class group of $\Q(\zeta_l)$}
\begin{algorithmic}[1]\label{alg:class_group}
\REQUIRE A smoothness bound $B > 0$, a constant $A > 0$ and a conductor $N$. 
\ENSURE $d_i$ such that $\Cl(\OK)=\bigoplus_i \Z/d_i$ for $K = \Q(\zeta_{N})$ and $M\in\Z^{k\times m}$,$(\alpha_i)_{i\leq k}\in\OK^n$ 
such that for each row $M_i$ of $M$, $\mathcal{B}^{M_i} = (\alpha_i)$, where $\mathcal{B}= \{\pg\ \mid \ \Nm(\pg)\leq B\}$. 
\STATE Compute $\mathcal{B} = \{\pg \ \mid \ \Nm(\pg)\leq B\}$. 
\STATE $m\leftarrow |\mathcal{B}|$, $k\leftarrow m$, $M\in\Z^{0\times m}$. 
\WHILE{ The number of relations is less than $k$}
\STATE $(a_i)_{i\leq n}\xleftarrow[]{\mathcal{R}}[-A,A]^n$, $\alpha\leftarrow \sum_i a_i\zeta_N^i$.
\IF{$(\alpha)$ is $\mathcal{B}$-smooth}
\STATE Find $(e_i)_{i\leq m}$ such that $(\alpha)=\prod_i\pg_i^{e_i}$.
\STATE $M\leftarrow \left(\frac{M}{(e_i)}\right)$.
\ENDIF
\ENDWHILE
\STATE \textbf{if} $M$ does not have full rank \textbf{then} $k\leftarrow 2k$ and go to Step~3.
\STATE $H\leftarrow \operatorname{HNF}(M)$. $d\leftarrow \det(H)$. $B\leftarrow \ker(M)$. 
\STATE $L\leftarrow \{\Log(\alpha_1),\cdots,\Log(\alpha_k)\}$. $C\leftarrow \left(L^{B_i}\right)$.
\STATE Let $V$ be the volume of the lattice generated by the rows of $C$, and $h^*$ be an approximation of $hR$ given by the 
methods of~\cite{BachEulerProd}.
\STATE \textbf{if} $dV > 1.5h^*$ \textbf{then} $k\leftarrow 2k$ and go to Step~3.
\STATE $\diag(d_i,\cdots,d_m)\leftarrow \operatorname{SNF}(H)$. 
\RETURN $(d_i)_{i\leq m}$, $M$. $(\alpha_i)_{i\leq k}$.
\end{algorithmic}
\end{algorithm}

The run time of Algorithm~\ref{alg:class_group} depends on the probability of smoothness of 
principal ideals, which is ruled by Heuristic~\ref{heuristic:smoothness_ideals}. This gives 
us a bound on the average time to find a relation. However, there is no indication that the 
relations we find are distributed according to a distribution in $\Lambda$ allowing us to 
terminate the computation in subexponential time. Suppose we found a full rank sublattice $\Lambda_0$ of 
$\Lambda$, Hafner and McCurley~\cite{hafner} 
proved under GRH that their relation search for quadratic fields yielded relation vectors such that 
the probability of drawing one in the the coset $\vec{w} + \Lambda_0$ for any $\vec{w}\in \Lambda$ 
was bounded from below by a large enough bound. This allowed them to justify that the algorithm would 
terminate with high enough probability in subexponential time. It it reasonable to assume that 
by drawing coefficient vectors uniformly at random in $[-A,A]$, the generators of the principal ideals 
of our relations will be well enough distributed to justify that the relations themselves are 
equally distributed in $\Lambda$, but proving it remains an open question. 

\begin{heuristic}\label{heuristic:relation_lattice}
Let $\Lambda_0$ be a sublattice of $\Lambda$ corresponding to the relations between primes 
in $\mathcal{B}$, and $A > 1$ be a constant. If $(a_i)_{i\leq n}$ is 
drawn uniformly at random in $[-A,A]^n$, and if $(\alpha)\OK = \mathcal{B}^{\vec{w}_\alpha}$ is $\mathcal{B}$-smooth, 
then for any $\vec{w}\in\Lambda$ 
$$P\left( \vec{w}_\alpha \in \vec{w} + \Lambda_0\right) \geq \frac{\det(\Lambda)}{\det(\Lambda_0)}\left( 1 + o(1)\right).$$
\end{heuristic}

\begin{proposition}[GRH+Heuristic~\ref{heuristic:smoothness_ideals}+Heuristic~\ref{heuristic:relation_lattice}]\label{prop:class_group}
Algorithm~\ref{alg:class_group} with $B \in 2^{O(n^{1/2})}$ and $N$ of the form $p^s$ 
is correct and its heuristic complexity is in $2^{O(n^{1/2})}$
\end{proposition}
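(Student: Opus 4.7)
The plan is to adapt the Hafner-McCurley/Buchmann index-calculus framework to the cyclotomic setting, exploiting the fact that $\Phi_{p^s}$ has height $1$ to keep sampled norms small. Throughout, I fix $B = 2^{cn^{1/2}}$ for a constant $c>0$ and take the input $A>0$ constant. The factor base satisfies $m = |\mathcal{B}| \in 2^{O(n^{1/2})}$ by prime counting, and each random $\alpha = \sum_{i<n} a_i\zeta_{p^s}^i$ with $|a_i|\leq A$ has every complex embedding bounded by $nA$, so $|\Nm(\alpha)| \leq (nA)^n$ and $\ln|\Nm(\alpha)| \in O(n \log n)$.

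For the relation-collection loop, I apply Heuristic~\ref{heuristic:smoothness_ideals} with $\iota = (nA)^n$ and $\mu = B$: then $u = \ln\iota/\ln\mu \in O(n^{1/2}\log n)$, so a random $(\alpha)$ is $\mathcal{B}$-smooth with probability at least $e^{-u\ln u(1+o(1))} = 2^{-O(n^{1/2+\varepsilon})}$ for arbitrarily small $\varepsilon > 0$. The smoothness test and factorization of $\Nm(\alpha)$ run in subexponential time using~\cite{NF_sieve,bernstein}, so producing one relation costs $2^{O(n^{1/2+\varepsilon})}$ in expectation. To argue that collecting $O(m)$ such relations generates the full relation lattice $\Lambda$ rather than a proper sublattice, I invoke Heuristic~\ref{heuristic:relation_lattice}: as in the original Hafner-McCurley analysis, whenever the currently spanned $\Lambda_0 \subsetneq \Lambda$, a fresh random smooth relation lands outside $\Lambda_0$ with probability at least $1 - \det(\Lambda)/\det(\Lambda_0)\geq 1/2$, so a standard chain argument over the finite lattice of sublattices containing $\Lambda_0$ gives completion after $2^{O(n^{1/2})}$ successful relations with overwhelming probability.

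The most delicate step is the termination certificate in Lines~13--14. Under GRH, the analytic estimator $h^*$ of~\cite{BachEulerProd} satisfies $h^* \leq hR < 2h^*$. A standard computation shows that whenever the rows of $M$ span a full-rank sublattice $\Lambda_0\subseteq\Lambda$, the product $dV$ equals a positive integer multiple of $hR$, with multiplier $1$ exactly when both $\Lambda_0 = \Lambda$ and the $\Log$-vectors of the units $\prod_i \alpha_i^{x_i}$ for $(x_i) \in \ker(M)$ span all of $\Log(\OK^*)$. The inequality $dV \leq 1.5 h^* < 2 hR$ then forces the multiplier to equal $1$, so the SNF of $H$ correctly yields the invariant factors of $\Cl(\OK) = \Z^m/\Lambda$, establishing correctness. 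All remaining linear-algebra steps (HNF, SNF, kernel, volume) operate on matrices of size $2^{O(n^{1/2})}$ with entries of bit-size $O(n\log n)$ and therefore fit within the same subexponential budget.

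The hardest part of the argument is the joint use of the two heuristics. Heuristic~\ref{heuristic:smoothness_ideals} only controls the smoothness density averaged over principal ideals of bounded norm, not specifically over those generated by our coefficient sampling $[-A,A]^n$; and Heuristic~\ref{heuristic:relation_lattice}, the near-uniformity assumption on the relation vectors modulo sublattices, is unproven outside the quadratic case treated in~\cite{hafner}. Making these rigorous would require a zero-density result for Hecke $L$-functions tailored to our sampling distribution, which is well beyond current techniques; consequently the bound $2^{O(n^{1/2+\varepsilon})}$ matching the theorem for Algorithm~\ref{alg:PIP} is heuristic rather than unconditional.
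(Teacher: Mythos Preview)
Your proposal is correct and follows the same index-calculus strategy as the paper. The one minor difference is how you bound $|\Nm(\alpha)|$: the paper writes the norm as $\operatorname{Res}(\Phi_N,P)$ and applies Hadamard's inequality to the Sylvester matrix (using that $\Phi_{p^s}$ has $\{0,1\}$ coefficients, so its rows have length at most $\sqrt{n}$), whereas you bound each Archimedean embedding directly by $nA$ via $|\zeta_N|=1$; both routes give $|\Nm(\alpha)|\leq (nA)^n$, and your version is arguably the more elementary of the two. Your treatment of the termination certificate $dV\leq 1.5h^*$ and of the role of Heuristic~\ref{heuristic:relation_lattice} in forcing $\Lambda_0=\Lambda$ is more complete than the paper's own proof, which omits correctness entirely and only sketches the smoothness and linear-algebra costs. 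You are also right that the $\log$ factors in $u\ln u$ push the exponent to $n^{1/2+\varepsilon}$ rather than a clean $n^{1/2}$; the proposition as stated is slightly loose on this point, and the $\varepsilon$ reappears in the theorem for Algorithm~\ref{alg:PIP}.
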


\begin{proof}
The run time depends on the smoothness probability of $\alpha\in\OK$ drawn in Step~4. Let $P\in\Z[X]$ such that 
$\alpha = P(\zeta_{N})$ for $N = p^s$. The norm of $\alpha$ is given by $\operatorname{Res}(\Phi_N,P)$ where 
$\Phi_N$ is the $N$-th cyclotomic polynomial. The first $n$ rows of the resultant have 
length less than $\sqrt{n}$ while the last $n$ rows have length bounded by $\sqrt{n}A$. 
By Hadamard's bound, the resultant is bounded by $n^nA^n$. This means 
that $\log(|\Nm(\alpha)|) \leq n\log(n)(1+o(1))$ (as $A$ is a constant). 
Let $u := \frac{\log(|\Nm(\alpha)|)}{\log(B)}$, from Heuristic~\ref{heuristic:smoothness_ideals}, 
the probability of finding a smooth $\alpha$ is at least 
$e^{-u\ln(u)(1+o(1))} \in \frac{1}{2^{O\left(n^{1/2}\right)}}$, and therefore the 
whole relation search takes time $2^{O(n^{1/2})}$. 
The linear algebra phase (HNF and SNF computation) takes time $|\mathcal{B}|^{4+o(1)} \in 2^{O(n^{1/2})}$, 
which is asymptotically the same as the relation collection phase. 
\end{proof}

\begin{corollary}[GRH+Heuristic~\ref{heuristic:smoothness_ideals}+Heuristic~\ref{heuristic:relation_lattice}]
Algorithm~\ref{alg:class_group} has heuristic complexity $2^{O(n^{a})}$ when $B \in 2^{O(n^{a})}$ for 
$\frac{1}{2}  \leq a < 1$ and $b > 0$. 
\end{corollary}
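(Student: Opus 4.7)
The plan is to follow the structure of the proof of Proposition~\ref{prop:class_group}, simply tracking how the cost depends on the parameter $a$ rather than fixing it to $1/2$. The running time of Algorithm~\ref{alg:class_group} splits into (i) a relation-collection phase, whose expected cost is the reciprocal of the smoothness probability times $|\mathcal{B}|$ (up to polynomial factors for the smoothness test), and (ii) an HNF/SNF linear-algebra phase whose cost is polynomial in $|\mathcal{B}|$. The correctness arguments --- Heuristic~\ref{heuristic:relation_lattice} guarantees that the lattice spanned by sampled relation vectors eventually equals $\Lambda$, and the test $dV \le 1.5 h^{*}$ in Step~14 certifies that the output is $\Cl(\OK)$ --- transfer verbatim from Proposition~\ref{prop:class_group}, since they do not depend on the choice of $B$.

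For the quantitative estimate, I would reuse the Hadamard-bound computation from Proposition~\ref{prop:class_group}: for $\alpha = P(\zeta_N)$ with $P \in \Z[X]$ having coefficients in $[-A,A]$ and constant $A$, one still gets $\log |\Nm(\alpha)| \le n \log n (1 + o(1))$. Setting $u := \log|\Nm(\alpha)|/\log B$, the hypothesis $B \in 2^{O(n^{a})}$ gives $u = \Theta(n^{1-a} \log n)$, and Heuristic~\ref{heuristic:smoothness_ideals} provides a smoothness probability of at least $e^{-u \ln u (1+o(1))}$. Since $u \ln u = O\bigl(n^{1-a} (\log n)^{2}\bigr)$ and the assumption $a \ge 1/2$ gives $1-a \le a$, the reciprocal of the smoothness probability is in $2^{O(n^{a})}$. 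Multiplying by $|\mathcal{B}| = 2^{O(n^{a})}$ keeps the relation-collection cost in $2^{O(n^{a})}$, and the $|\mathcal{B}|^{4+o(1)}$ cost of HNF/SNF matches this bound.

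The main obstacle is the borderline case $a = 1/2$, where a literal reading of $u \ln u = O(n^{1/2}(\log n)^{2})$ is not strictly $O(n^{1/2})$; this is handled exactly as in Proposition~\ref{prop:class_group}, by the convention of absorbing subpolynomial factors into the $O(\cdot)$ in the exponent (equivalently, by phrasing the bound as $2^{O(n^{a+\varepsilon})}$ for arbitrarily small $\varepsilon > 0$, matching the abstract). For $a > 1/2$ the inequality $1-a < a$ is strict, so $n^{1-a}(\log n)^{2} = o(n^{a})$ and no such adjustment is needed. The upper bound $a < 1$ only ensures that the smoothness probability does not degenerate and that $|\mathcal{B}| \le |\disc(K)|^{O(1)}$ so that the factor base is still a strict subset of the prime ideals of $\OK$ of subexponential norm.
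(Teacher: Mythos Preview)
Your proposal is correct and follows exactly the approach the paper intends: the corollary is stated without a separate proof, as it is meant to follow immediately from the proof of Proposition~\ref{prop:class_group} by replacing the fixed exponent $1/2$ with a general $a\in[1/2,1)$, and you carry out precisely that substitution (Hadamard bound on $\Nm(\alpha)$, smoothness probability via $u\ln u$, and the $|\mathcal{B}|^{4+o(1)}$ linear-algebra cost). Your remark on the polylogarithmic slack at $a=1/2$ is also consistent with the paper's convention, which already absorbs such factors in Proposition~\ref{prop:class_group}.
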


\section{A $\qg$-descent algorithm to solve the PIP}\label{sec:q_desc}

In this section we show how decompose an input ideal $I\subseteq\OK$ into a 
$B$-smooth product for ideals in $\Cl(\OK)$ for some $B>0$. In other words, 
we find $\pg_1,\cdots,\pg_m$ with $\Nm(\pg_i)\leq B$, $(e_i)_{i\leq m}\in\Z^m$ and 
$\alpha\in K$ such that $I = (\alpha)\prod_i\pg_i^{e_i}$. Then $I$ is principal if 
and only if $\prod_i\pg_i^{e_i}$ is principal as well, which is checked by solving a linear system 
as described in Algorithm~\ref{alg:PIP}. In the case where $I$ is principal, this 
process also returns a generator. 

To decompose $I$, we execute a $\qg$-descent consisting of searching for small 
elements $\alpha$ of $I$ such that $(\alpha)/I$ is smooth with respect to ideals of norm less than 
$I$. Then we iterate the process on all divisors of $(\alpha)/I$ that have norm greater 
than $B$ until we finally break it down into a product of ideals of norm less than 
$B$.

The 
idea of the $\qg$-descent derives from the algorithms based on the number 
field sieve~\cite{NF_sieve} to solve the discrete logarithm problem in time $L_q(1/3)$ in $\F_q$ 
(see in particular~\cite{Adleman93,gordon_GFp,NFS_Fre}). This idea was also used to 
solve the discrete logarithm problem in the Jacobian of certain classes of $C_{ab}$ curves~\cite{Enge2}. 
A $\qg$-descent strategy was used to 
derive relations in $\Cl(\OK)$ in~\cite{biasse_Phd,biasse_subexp,ANTS_XI}, but it was restricted 
to classes of fields with degree $n \in O(\log(|\Delta|)^a$ for some $a < 1/2$. In this paper, we 
achieve a heuristic $L_\Delta(1/2+\varepsilon,1)$ complexity for any $\varepsilon>0$ in 
cyclotomic fields despite the fact that the degree of these fields satisfies $n\in\tilde{\Theta}(\log(|\Delta|))$. 

\paragraph{\textbf{Finding short elements in $I$}}

We assume that $I$ is an ideal of $\OK$. We want to find elements $\alpha\in I$ 
of small norm. To do this, we restrict the search to the lattice 
$$\mathcal{L}_I := \Z v_{1,1} + \Z (v_{2,2}\zeta_N + v_{2,1}) + \cdots + \Z(v_{k,k}\zeta_N^k + v_{k,k-1}\zeta_N^{k-1}+\cdots + v_k)\subseteq I,$$
for some $k > 0$ where $N = q^s$ is the conductor of $K$. This strategy was used in~\cite{biasse_Phd,biasse_subexp,ANTS_XI} 
in the case of $I=\qg$ a degree one prime ideal, which is enough for the sake of collecting relations to compute $\Cl(\OK)$ and 
solve the PIP. However, it was pointed out in~\cite{EUROCRYPT17} that this approach was folklore. In particular, it has been 
used under the more general form presented in this paper by Cheon~\cite{CL15}. 
The coefficients $v_{i,j}$ are given 
by the HNF of the $\Z$-basis of $I$ which has the shape
\[ H:=\left( \begin{array}{ccccc}
v_{1,1}    &  0    & \hdots & \hdots &     0    \\
v_{2,1}    &  v_{2,2}    & \ddots &        &    \vdots \\
\vdots &  0    & \ddots & \ddots &    \vdots \\
\vdots & \vdots& \ddots &  \ddots&      0    \\
v_{N,1}& 0       & \hdots & 0      &   v_{N,N}\\
\end{array} \right).\] 

\begin{lemma}\label{lemma:smooth}
We can find a vector  
in $\mathcal{L}_I$ of length less than $\sqrt{k}\Nm(I)^{\frac{1}{k}}$ in time $2^{O(k)}$.  
\end{lemma}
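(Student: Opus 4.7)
The plan is to reduce the problem to an SVP computation on a rank-$k$ sublattice whose determinant is bounded by $\Nm(I)$, and then invoke an enumeration-type algorithm running in time $2^{O(k)}$ (such as HKZ, already referenced in the background paragraph on lattices).

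First I would observe that, in the truncated power basis $1,\zeta_N,\dots,\zeta_N^{k-1}$, the generators of $\mathcal{L}_I$ that appear in the statement of the lemma form exactly the $k\times k$ lower-triangular upper-left block of the HNF matrix $H$. In particular $\mathcal{L}_I$ has full rank $k$ inside the $\Z$-span of $1,\zeta_N,\dots,\zeta_N^{k-1}$, and its determinant equals the product of the diagonal entries of this block, namely $\det(\mathcal{L}_I) = \prod_{i=1}^k v_{i,i}$. Since $H$ is an HNF with positive diagonal, every $v_{j,j}$ is a positive integer, and $\Nm(I) = \prod_{j=1}^N v_{j,j}$, dropping the last $N-k$ factors (each at least $1$) yields
\[
\det(\mathcal{L}_I) \;=\; \prod_{i=1}^k v_{i,i} \;\leq\; \prod_{j=1}^N v_{j,j} \;=\; \Nm(I).
\]

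Next I would invoke Minkowski's theorem on the rank-$k$ lattice $\mathcal{L}_I$, which guarantees the existence of a nonzero vector of Euclidean length at most $\sqrt{k}\,\det(\mathcal{L}_I)^{1/k} \leq \sqrt{k}\,\Nm(I)^{1/k}$. To actually produce such a vector, I would run the HKZ algorithm on the given basis of $\mathcal{L}_I$; as recalled in the background, HKZ outputs a first basis vector $\vec{b}_1$ with $\|\vec{b}_1\|/\det(\mathcal{L}_I)^{1/k} \in O(\sqrt{k})$ in time $2^{O(k)}$ in the dimension of the lattice. Because the dimension here is $k$ and the bit-size of the input basis is polynomial in $\log \Nm(I)$ and $n$, the total running time remains $2^{O(k)}$ up to polynomial factors that are absorbed in the exponent.

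The only real subtlety I anticipate is keeping the constant in the lattice bound clean, since different conventions give either the exact Minkowski form $\sqrt{k}\,\det^{1/k}$ or an $O(\sqrt{k})\,\det^{1/k}$ bound through HKZ. The statement should be read up to this absolute constant, or alternatively proved by calling any exact SVP oracle of cost $2^{O(k)}$ (for example Kannan's enumeration or AKS), applied to $\mathcal{L}_I$, and combining its output with Minkowski's inequality. Everything else, including the HNF computation of $I$ and the extraction of the sublattice $\mathcal{L}_I$, is polynomial in the input size and therefore negligible compared to the $2^{O(k)}$ cost of the reduction step.
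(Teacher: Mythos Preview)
Your argument is correct and follows essentially the same route as the paper: bound $\det(\mathcal{L}_I)=\prod_{i\leq k} v_{i,i}\leq \prod_{i\leq N} v_{i,i}=\Nm(I)$ via the triangular HNF, then run HKZ on the rank-$k$ lattice to obtain a first vector of length at most $\sqrt{k}\,\det(\mathcal{L}_I)^{1/k}$ in time $2^{O(k)}$. Your extra remarks on Minkowski versus the HKZ constant and on the polynomial cost of extracting $\mathcal{L}_I$ are fine but not needed, since the paper simply quotes the HKZ output bound directly.
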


\begin{proof}
The determinant of $\mathcal{L}_I$ is that of the upper left $k\times k$ 
submatrix of $H$ and satisfies $\det(\mathcal{L}_I) \leq \prod_{i\leq N}v_{i,i} = \Nm(I)$. 
\begin{comment}
$\sqrt{\det(A\cdot A^T)}$, and 
\[ AA^T:=\left( \begin{array}{ccccc}
1 +v_1^2   & v_1v_2   & \hdots & v_1v_{k-1}  & v_1v_k    \\
v_1v_2     & 1 + v_2^2 &       &             & v_2v_k \\
\vdots     &          & \ddots &             & \vdots     \\
v_1v_{k-1}     &          & &        \ddots      & v_{k-1}v_k     \\
v_1v_k      &  \hdots        & \hdots     & v_{k-1}v_k & 1+v_{k}^2\\
\end{array} \right).\] 
\end{comment}
An HKZ-reduction returns a basis 
whose first vector has length less than $\sqrt{k}(\det(\mathcal{L}_I))^{1/k}$ in time $2^{O(k)}$. 
\end{proof}

\begin{lemma}\label{lemma:smooth_combinations}
In time $2^{O(k)}$, we can find an element $\alpha\in I$ 
such that $\Nm(\alpha)\leq n^{n}\Nm(I)^{\frac{n}{k}}.$
\end{lemma}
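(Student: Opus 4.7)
The plan is to use Lemma~\ref{lemma:smooth} as a black box to produce a short element of $\mathcal{L}_I \subseteq I$ in the coefficient embedding, and then convert that short-coefficient bound into a bound on every complex embedding, hence on the norm. Running HKZ on $\mathcal{L}_I$ returns in time $2^{O(k)}$ an element $\alpha = a_0 + a_1 \zeta_N + \cdots + a_{k-1} \zeta_N^{k-1} \in I$ whose coefficient vector $(a_0,\ldots,a_{k-1})$ satisfies $\|(a_0,\ldots,a_{k-1})\|_2 \leq \sqrt{k}\,\Nm(I)^{1/k}$.

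Next I would bound each embedding of $\alpha$. Any complex embedding $\sigma$ of $K=\Q(\zeta_N)$ sends $\zeta_N$ to another primitive $N$-th root of unity, so $|\sigma(\zeta_N)^i|=1$ for every $i$. Cauchy--Schwarz then gives
\[
|\sigma(\alpha)| \;=\; \Bigl|\sum_{i=0}^{k-1} a_i\, \sigma(\zeta_N)^i\Bigr| \;\leq\; \Bigl(\sum_{i=0}^{k-1} |a_i|^2\Bigr)^{1/2}\!\Bigl(\sum_{i=0}^{k-1} |\sigma(\zeta_N)^i|^2\Bigr)^{1/2} \;\leq\; \sqrt{k}\cdot \Nm(I)^{1/k}\cdot \sqrt{k} \;=\; k\,\Nm(I)^{1/k}.
\]

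Finally, taking the product over the $n$ complex embeddings yields
\[
|\Nm(\alpha)| \;=\; \prod_{\sigma} |\sigma(\alpha)| \;\leq\; \bigl(k\,\Nm(I)^{1/k}\bigr)^{n} \;=\; k^{n}\,\Nm(I)^{n/k} \;\leq\; n^{n}\,\Nm(I)^{n/k},
\]
since $k \leq n$. The whole procedure is dominated by the HKZ reduction on a lattice of rank $k$, which costs $2^{O(k)}$.

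There is no real obstacle here: the lemma is a one-line corollary of Lemma~\ref{lemma:smooth} once one observes that $|\sigma(\zeta_N)|=1$ for every embedding, so a small coefficient vector forces every embedding to be small and hence the norm to be small. The only thing worth being careful about is that the bound from Lemma~\ref{lemma:smooth} is an $L_2$-bound on the coefficient vector rather than on any archimedean embedding, which is exactly why the Cauchy--Schwarz step is the right conversion and loses only a factor of $\sqrt{k}$ per embedding.
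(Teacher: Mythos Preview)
Your argument is correct and in fact slightly cleaner than the paper's. Both proofs start identically, invoking Lemma~\ref{lemma:smooth} to obtain $\alpha\in\mathcal{L}_I\subseteq I$ with coefficient vector of $L_2$-norm at most $\sqrt{k}\,\Nm(I)^{1/k}$, but they diverge in how they pass from a short coefficient vector to a norm bound. The paper writes $\Nm(\alpha)=\operatorname{Res}(\Phi_N,P)$ and applies Hadamard's inequality to the Sylvester matrix (exactly as in the proof of Proposition~\ref{prop:class_group}), obtaining $\Nm(\alpha)\leq n^{n/2}\,\|(\alpha_1,\dots,\alpha_k)\|^{\,n}\leq n^{n/2}k^{n/2}\,\Nm(I)^{n/k}$. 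You instead bound each archimedean embedding directly via Cauchy--Schwarz, using only that $|\sigma(\zeta_N)|=1$, and get $\Nm(\alpha)\leq k^{n}\,\Nm(I)^{n/k}$. Since $k\leq n$, your intermediate constant $k^{n}\leq n^{n/2}k^{n/2}$ is never worse, and your route avoids any appeal to the resultant or to the height of $\Phi_N$; it would work verbatim for any monogenic ring $\Z[\theta]$ with $|\sigma(\theta)|\leq 1$ for all embeddings. The paper's resultant/Hadamard approach, on the other hand, is the one that generalises when the generator $\theta$ is not on the unit circle but the defining polynomial still has small height.
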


\begin{proof}
Let $\alpha$ be the first vector of an HKZ-reduced basis of $\mathcal{L}_I$. The calculation of this basis 
takes time $2^{O(k)}$ and by Lemma~\ref{lemma:smooth}, the length of its first vector $(\alpha_1,\cdots,\alpha_k)$ is bounded 
by $\sqrt{k}\Nm(I)^{\frac{1}{k}}$. 
By the same argument as in the proof of Proposition~\ref{prop:class_group}, 
the algebraic norm of $\alpha := \sum_i \alpha_i \zeta_N^i$ satisfies 
$$\Nm(\alpha) \leq \left.\sqrt{n}\right.^n \left( \|(\alpha_1,\cdots,\alpha_k)\| \right)^n\leq \underbrace{n^{n/2} k^{n/2}}_{\leq n^n}\Nm(I)^{\frac{n}{k}}.$$
\end{proof}

\paragraph{\textbf{A round of the $\qg$-descent}}

Let $\varepsilon > 0$ be an arbitrarily small constant and $\qg$ be a prime 
ideal with $\log(\Nm(\qg)) \leq n^b$ for some $1/2< b \leq 1$. 
We would like to decompose the class of $\qg$ in $\Cl(\OK)$ as a product of primes $\pg$ 
with $\log(\Nm(\pg))\leq n^{b-\varepsilon}$ in time $2^{O(n^{1/2 + \varepsilon})}$. 

\begin{algorithm}[ht]
\caption{One round of the $\qg$-descent}
\begin{algorithmic}[1]\label{alg:one_round}
\REQUIRE Prime ideal $\qg$ with $\log(\Nm(\qg))\leq n^b$ for $1\geq b \geq 1/2 + \varepsilon$, a constant $\varepsilon > 0$ and $A\geq 1$. 
\ENSURE Prime ideals $\pg_i$, integers $e_i$ and $\alpha\in\qg$ such that $(\alpha)/\qg = \prod_i\pg_i^{e_i}$ 
and $\log(\Nm(\pg_i))\leq n^{b-\varepsilon}$. 
\STATE $S\leftarrow \left\{\pg_i\ \text{such that }\Nm(\pg_i)\leq 12\log(|\Delta|)^2\right\}$.
\WHILE{no relation has been found}
\STATE $(x_i)\xleftarrow[]{R}[0,A]^{|S|}$ where $R$ is uniform over vectors of weight $n^{b}$, 
\STATE $I\leftarrow \qg\prod_i \pg_i^{x_i}$. 
\STATE Construct a basis for the lattice $\mathcal{L}_I$ with $k = n^{1/2 + \varepsilon}$. 
\STATE HKZ-reduce $\mathcal{L}_I$ and let $\alpha$ be its first vector. 
\STATE \textbf{if} $(\alpha)$ is $2^{n^{b-\varepsilon}}$-smooth, find $(\pg_i)$,$(e_i)$ such that $(\alpha) = \prod_i\pg_i^{e_i}$.  
\ENDWHILE
\RETURN $\alpha$, $(e_i)$, $(\pg_i)$, $(e_i)$. 	
\end{algorithmic}
\end{algorithm}
\noindent Note that Step~3 and Step~4 of Algorithm~\ref{alg:one_round} were used in~\cite{EUROCRYPT17}. 
These steps can be replaced by an enumeration of 
short vectors in $\qg$. An enumeration strategy was used in~\cite{biasse_Phd,biasse_subexp,ANTS_XI}, 
and in this paper, we introduce a similar variant as a practical improvement (see Section~\ref{sec:improvements}). 
The advantage of using the enumeration strategy of Section~\ref{sec:improvements} is that it requires only 
on HKZ reduction. However, we must assume an extra heuristic on the size of the shortest vector. With 
Step~3 and Step~4 of Algorithm~\ref{alg:one_round} on the other hand, the analysis is simpler.

\begin{proposition}[GRH + Heuristic~\ref{heuristic:smoothness_ideals}]\label{prop:run_time_qstep}
Let $\varepsilon > 0$ be a constant, $1/2+\varepsilon\leq b\leq 1$, 
and $\qg$ be a prime ideal with $\log(\Nm(\qg))\leq n^b$. 
There is a large enough constant $A$ such that Algorithm~\ref{alg:one_round} returns a decomposition of 
$\qg$ in $\Cl(\OK)$ as a product of degree one primes $\pg_i$ with 
$\log(\Nm(\pg))\leq n^{b-\varepsilon}$ in time $2^{O(n^{1/2 + \varepsilon})}$ when 
the conductor $N$ is of the form $p^s$. 
\end{proposition}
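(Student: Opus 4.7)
The plan is to establish that each iteration of the inner loop of Algorithm~\ref{alg:one_round} costs $2^{O(n^{1/2+\varepsilon})}$, and that, under Heuristic~\ref{heuristic:smoothness_ideals}, the expected number of iterations is also $2^{O(n^{1/2+\varepsilon})}$, so that the total runtime matches the claim.

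First I would bound the size of the lattice $\mathcal{L}_I$. Since $\log\Nm(\qg)\leq n^b$, each prime in $S$ has norm at most $12\log^2|\Delta| = O(n^2\log^2 n)$, and the exponents $x_i$ lie in $[0,A]$ for a constant $A$, one may write $\log\Nm(I) = n^b(1+o(1))$ provided the contribution of the small primes is controlled---this is where Heuristic~\ref{heuristic:relation_lattice} comes in, guaranteeing that a bounded-size product of elements of $S$ suffices to randomize the class of $I$ uniformly across $\Cl(\OK)$. Next, with $k = n^{1/2+\varepsilon}$, Lemma~\ref{lemma:smooth_combinations} produces $\alpha\in I$ with $\Nm(\alpha)\leq n^n\Nm(I)^{n/k}$ via an HKZ reduction in time $2^{O(k)} = 2^{O(n^{1/2+\varepsilon})}$, so that $\log\Nm(\alpha) = O(n\log n + n^{1/2-\varepsilon}\cdot n^b) = O(n^{b+1/2-\varepsilon})$, and the cofactor $\mathfrak{c} = (\alpha)/I$ inherits the bound $\log\Nm(\mathfrak{c}) = O(n^{b+1/2-\varepsilon})$.

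For the smoothness analysis, I would apply Heuristic~\ref{heuristic:smoothness_ideals} with smoothness bound $\mu = 2^{n^{b-\varepsilon}}$, yielding success probability at least $e^{-u\ln u(1+o(1))}$ where $u = \log\Nm(\mathfrak{c})/n^{b-\varepsilon} = O(n^{1/2})$, i.e.\ $2^{-O(n^{1/2}\log n)} = 2^{-O(n^{1/2+\varepsilon})}$. Hence the expected iteration count is $2^{O(n^{1/2+\varepsilon})}$; combined with the per-iteration HKZ cost and a smoothness test on $\Nm(\alpha)$ via the methods of~\cite{NF_sieve,bernstein} (which is $L_{\Nm(\alpha)}(1/3,O(1))$ and thus dominated), this yields the claimed runtime. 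The restriction to degree-one primes in the output follows the rarity argument sketched after Heuristic~\ref{heuristic:smoothness_ideals}: since $\Nm(\pg) = p^f$, primes with $f\geq 2$ of norm at most $2^{n^{b-\varepsilon}}$ are asymptotically scarce and can be replaced at only polynomial overhead.

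The main obstacle is the control of $\log\Nm(I)$: the naive bound $\log\Nm(I)\leq n^b + A\cdot|S|\cdot\log(12\log^2|\Delta|)$ is polynomially large in $n$ since $|S|$ is polynomial, which would spoil the smoothness analysis for small $b$. The delicate part of the argument must exploit Heuristic~\ref{heuristic:relation_lattice} to show that only a logarithmic-size effective support among the randomization primes suffices to equidistribute $[I]$ across $\Cl(\OK)$, keeping $\log\Nm(I)$ essentially equal to $\log\Nm(\qg)=O(n^b)$ up to sub-dominant terms. This is the analogue of the Hafner--McCurley random-walk step, and represents the only place where the naive quantitative bounds do not immediately yield the desired complexity.
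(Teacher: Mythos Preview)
Your overall strategy matches the paper's proof: bound $\Nm(I)$, invoke Lemma~\ref{lemma:smooth_combinations} to bound $\Nm(\alpha)$, apply Heuristic~\ref{heuristic:smoothness_ideals} to get the smoothness probability, and observe that the HKZ reduction of dimension $k=n^{1/2+\varepsilon}$ dominates the per-iteration cost. The paper's proof is in fact shorter than yours: it simply asserts $\Nm(I)=\Nm(\qg)^{1+o(1)}$, derives $\log\Nm(\alpha)\leq n^{1/2+b-\varepsilon}(1+o(1))$, computes $u\in\tilde O(n^{1/2})$, and notes that the search space $A^{|S|}\geq 2^{n^2}$ is large enough.

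You are right to flag the norm of $I$ as the delicate point---the paper's one-line claim $\Nm(I)=\Nm(\qg)^{1+o(1)}$ is not justified there, and with the algorithm as written (random exponents in $[0,A]$ on \emph{all} primes of $S$, where $|S|$ is polynomial in $n$) the contribution of the randomisation primes to $\log\Nm(I)$ is of order $n^{2+o(1)}$, which would indeed wreck the smoothness estimate. However, your proposed repair via Heuristic~\ref{heuristic:relation_lattice} is misdirected: that heuristic concerns the equidistribution of relation vectors among cosets of a sublattice $\Lambda_0\subseteq\Lambda$ (it is the Hafner--McCurley-type input to the \emph{class group} termination argument), and says nothing about how few primes suffice to randomise the ideal class. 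The actual fix is much more elementary and does not require any additional heuristic: since the expected number of trials is only $2^{\tilde O(n^{1/2})}$, a search space of that size suffices, and one obtains it by taking a sparse random product over, say, $O(n^{1/2})$ of the primes in $S$ with exponents in $\{0,1\}$. This keeps the extra contribution to $\log\Nm(I)$ in $\tilde O(n^{1/2})=o(n^b)$, restoring $\log\Nm(I)=n^b(1+o(1))$ and hence the paper's bound $\log\Nm(\alpha)\leq n^{1/2+b-\varepsilon}(1+o(1))$. So your diagnosis of the gap is correct, but the remedy you suggest is the wrong one; once the sparse randomisation is in place, your argument and the paper's coincide.
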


\begin{proof}
The ideal $I$ created in Step~3 of Algorithm~\ref{alg:one_round} satisfies $\Nm(I) = 2^{n^{\tilde{O}(b)}}$ as the norm of the extra factor used 
for randomization is $\Nm(\prod_i \pg_i^{x_i}) \leq 2^{\tilde{O}(An^{b})}$. Therefore, 
according to Lemma~\ref{lemma:smooth_combinations}, the $\alpha$ derived in Step~5 
satisfies $\log(\Nm(\alpha)) \leq \frac{n}{k}\log\left(\Nm(I)\right)(1+o(1))$. As $\log\left(\Nm(I)\right) \leq \tilde{O}(n^{b})$ and $k = n^{1/2 + \varepsilon}$, 
this gives us $\log(\Nm(\alpha)) \leq \tilde{O}\left(n^{1/2 + b - \varepsilon}\right)(1+o(1))$. We want $(\alpha)/I$ to be 
$B$-smooth for $\log(B) = n^{b - \varepsilon}$. 
Let $u = \frac{\log(\Nm(\alpha))}{\log(B)}$, the probability of finding such an $\alpha$ is 
$$P = e^{-u\ln(u)(1+o(1))} \geq \frac{1}{2^{\tilde{O}(n^{1/2})}}.$$
For any $A\geq 2$, the number of possible vectors $(x_i)$ is $\binom{|S|}{n^{b}}A^{n^{b}}$. Thus, size of the search space satisfies 
\begin{align*}
\binom{|S|}{n^{b}}A^{n^{b}} &\geq A^{n^{b}} |S|^{n^{b}} \left( 1 - \frac{n^{b}}{|S|}\right)^{n^{b}}\\
&\approx A^{n^{b}} \left( \frac{|S|}{n^{b}}\right)^{n^{b}} \frac{e^{n^{b}}}{\sqrt{2\pi n^{1/2}}} \left( 1 - \frac{n^{b}}{|S|}\right)^{n^{b}}\\
&\geq \left( \frac{|S|}{n^{b}}\right)^{n^{b}} e^{-\frac{n^{2b}}{|S|}}\\
&\geq \left( \frac{|S|}{n^{b}}\right)^{n^{b}} e^{-1}\ \text{ as } b\leq 1 \text{ and } |S| = n^{2 + o(1)}\\
&\gg 2^{\tilde{O}(n^{1/2})}\ \text{ as } b > 1/2 + \varepsilon.
\end{align*}
Therefore, the search space is sufficient to find our $\alpha$. The run time 
of this procedure is dominated by the reduction of $\mathcal{L}_q$ which takes $2^{O(k)} = 2^{O(n^{1/2 + \varepsilon})}$. 
\end{proof}

\begin{remark}
The constant $\varepsilon > 0$ can be replaced by a value of the form $\frac{\operatorname{Poly}(\log\log(n))}{\log(n)}$ where the numerator has a high enough degree 
to ensure that $n^{1/2 + \varepsilon} \gg \tilde{O}(n^{1/2})$. Then the complexity of one step of the $\qg$-descent is in $2^{n^{1/2 + o(1)}}$ which has 
to be repeated $\frac{1}{\varepsilon} \leq n$ times. 
\end{remark}

\paragraph{\textbf{Full procedure}}

Assume we are given an arbitrary ideal $I\subseteq\OK$ as input. Before initiating the $\qg$-descent, we need to 
break it down into a product of primes $\qg_i$ with $\log(\qg_i)\leq n$. To do this, 
we multiply $\ag$ by an random power product of ideals 
of the factor base $\mathcal{B}$, and BKZ-reduce it. This is done by finding 
a short element $\phi\in\cg$ where $\ag^{-1}=\frac{1}{d}\cg$ 
with $d\in\Z_{>0}$ and $\cg\subseteq \OO$. When using the BKZ algorithm with block size $l = n^{1/2}$, such a short element satisfies 
$$\| \phi \| \leq n^{n^{1/2}/4} |\Delta|^{\frac{1}{2n}} \Nm(\cg)^{\frac{1}{n}}.$$ 
Then, the ideal $\bg:= \frac{\phi}{l}\ag$ satisfies 
$\bg\subseteq\OK$, $\Nm(\bg)\leq 2^{\frac{n^{3/2}\log(n)}{2}} \sqrt{|\Delta|}$ and it is 
$2^n$-smooth with probability $\frac{1}{2^{\tilde{O}(n^{1/2}(1+o(1))}}$ by following the same argument as in the proof 
of Proposition~\ref{prop:run_time_qstep}. Repeating the process $2^{n^{1/2 + o(1)}}$ times yields the 
original $2^n$-smooth decomposition. 

\begin{algorithm}[ht]
\caption{Initial decomposition}
\begin{algorithmic}[1]\label{alg:LLL_red}
\REQUIRE An ideal $I\subseteq \OK$, factor base $\mathcal{B}$, and a constant $A\geq 1$.
\ENSURE $\alpha\in K$ such that $\bg:= (\alpha)I$ is $2^{n}$-smooth. 
\STATE $\ag\leftarrow \prod_i\pg_i^{e_i}$ for random $e_i\leq A$ (use a square-and-multiply strategy).
\STATE $\cg \leftarrow l\ag^{-1}$ where $l$ is the denominator of $\ag$.
\STATE Find a BKZ reduced $\phi\in\cg$ with block size $l = n^{1/2}$. 
\STATE $\alpha\leftarrow \frac{\phi}{l}$.
\STATE \textbf{if} $\bg:= (\alpha)I$ is not $2^{n}$-smooth \textbf{then} go to Step~1. 
\RETURN $\alpha$.
\end{algorithmic}
\end{algorithm}

\begin{algorithm}%[$\q$-descent]
\caption{$\q$-descent}
\begin{algorithmic}[1]\label{alg:full_q_descent}
\REQUIRE $I\subseteq \OK$ and an arbitrarily small $\varepsilon > 0$. 
\ENSURE Prime ideals $(\qg_i)_{i\leq l}\in\mathcal{B}$ with $\Nm(\q_i)\leq 2^{n^{1/2}}$, integers $(e_i)$ and $(\phi_j)_{j\leq k}\in K$ such that 
$I =\prod_{j\leq k}(\phi_j) \cdot\prod_{i\leq l}\mathfrak{q}_i^{e_i}$
%\STATE Find prime ideals $(\mathfrak{q}_i)_{i\leq l}$ of norm bounded by $|\Delta|$ and $\phi_1$ with $\ag= (\phi)\cdot\prod_i\mathfrak{q}_i$ using Algorithm~\ref{alg:first_step}
\STATE Decompose $I = (\phi_1)\prod_i\qg_i$ using Algorithm~\ref{alg:LLL_red}.
\STATE $\mathrm{genList}\leftarrow \left\lbrace \phi_1\right\rbrace $, $\mathrm{primeList}\leftarrow \left\lbrace\mathfrak{q}_1,\hdots,\mathfrak{q}_l\right\rbrace$, $\mathrm{expList}\leftarrow \{1,\cdots,1\}$.
\STATE $b\leftarrow 1$. 
\WHILE {$b > 1/2$}
\FOR {$\q\in\mathrm{primeList}$  with $\log(\Nm(\qg))> n^{b-\varepsilon}$}
\STATE Find $(\mathfrak{q}_i)_{i\leq l}$, $(e_i)_{i\leq l}$ and $\phi_k$ such that 
$\mathfrak{q}=(\phi_k)\prod_{i\leq l}\mathfrak{q}_i^{e_i}$ by a round of the $\qg$-descent. 
\STATE $\mathrm{genList}\leftarrow \mathrm{genList}\cup\left\lbrace\phi_k\right\rbrace$, $\mathrm{primeList}\leftarrow\mathrm{primeList}\cup\left\lbrace \mathfrak{q}_1,\hdots,\mathfrak{q}_l\right\rbrace$.
\STATE $\mathrm{expList}\leftarrow \{e_1,\cdots,e_l\}$.
\STATE Remove $\qg$ from $\mathrm{primeList}$ and $\mathrm{expList}$.
\ENDFOR
\STATE $b\leftarrow b-\varepsilon$.
\ENDWHILE
\RETURN $\mathrm{genList}$, $\mathrm{primeList}$, $\mathrm{expList}$.
\end{algorithmic}
\end{algorithm}

\begin{proposition}
Algorithm~\ref{alg:full_q_descent} decomposes $I$ as an $2^{n^{1/2}}$-smooth product in $\Cl(\OK)$ 
in time $2^{O(n^{1/2+\varepsilon})}$ for arbitrarily small $\varepsilon > 0$. 
\end{proposition}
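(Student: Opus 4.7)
The plan is to analyze the two phases of Algorithm~\ref{alg:full_q_descent} separately---the initial decomposition produced by Algorithm~\ref{alg:LLL_red} and the iterated $\qg$-descent loop---and then to control the total size of the recursion tree so that the polynomial blow-up in the number of primes is dominated by the per-call cost.

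For the initial decomposition in Step~1, I would reuse the analysis given just before Algorithm~\ref{alg:LLL_red}. A single BKZ reduction with block size $l=n^{1/2}$ runs in time $2^{O(n^{1/2})}$, and under Heuristic~\ref{heuristic:smoothness_ideals} the resulting ideal $\bg=(\phi/l)I$ of $\log$-norm at most $\tfrac{1}{2}n^{3/2}\log(n)+O(n\log n)$ is $2^n$-smooth with probability at least $1/2^{\tilde{O}(n^{1/2})}$. Iterating the draw costs $2^{O(n^{1/2+o(1)})}$ and produces a decomposition $I=(\phi_1)\prod_i \qg_i$ whose prime factors all satisfy $\log\Nm(\qg_i)\leq n$.

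Next I would analyze the while loop. Since $b$ starts at $1$ and is decremented by $\varepsilon$ until $b\leq 1/2$, the outer loop runs $\lceil 1/(2\varepsilon)\rceil = O(1)$ times for fixed $\varepsilon$. By Proposition~\ref{prop:run_time_qstep}, each call to Algorithm~\ref{alg:one_round} on a prime of $\log$-norm at most $n^b$ with $1/2\leq b\leq 1$ terminates in time $2^{O(n^{1/2+\varepsilon})}$, so the total cost depends only on how many calls we make.

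The main obstacle is bounding the size of the descent tree. At a descent step from a prime $\qg$ with $\log\Nm(\qg)\leq n^b$, the element $\alpha$ produced by Algorithm~\ref{alg:one_round} satisfies $\log\Nm(\alpha)\leq n^{1/2+b-\varepsilon}(1+o(1))\leq n^{3/2}$, so the number of distinct prime factors appearing in the decomposition of $(\alpha)/\qg$ is bounded by $n^{3/2}$ (each factor contributes at least one bit to $\log\Nm(\alpha)$). The branching factor per level is therefore at most $n^{3/2}$. Starting from at most $n^{3/2}\log(n)$ primes produced by Step~1 and iterating for $O(1/\varepsilon)$ levels, the tree contains at most $n^{O(1/\varepsilon)}$ nodes, which is polynomial in $n$ for fixed $\varepsilon$. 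Combining the cost of each node with this polynomial count gives a total running time of $n^{O(1/\varepsilon)}\cdot 2^{O(n^{1/2+\varepsilon})}=2^{O(n^{1/2+\varepsilon})}$. Correctness is immediate: when the loop exits with $b\leq 1/2$, every prime remaining in $\mathrm{primeList}$ has $\log\Nm\leq n^{1/2}$, which yields the announced $2^{n^{1/2}}$-smooth decomposition of $I$ in $\Cl(\OK)$.
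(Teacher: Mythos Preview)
Your argument is correct and follows essentially the same line as the paper's own proof: bound the cost of the initial BKZ-based decomposition, observe that the descent tree has constant depth $O(1/\varepsilon)$ and polynomial arity, and conclude that Algorithm~\ref{alg:one_round} is invoked only polynomially many times. Your write-up is in fact more detailed than the paper's---you justify the branching factor via the bound $\log\Nm(\alpha)\le n^{3/2}$ (the paper simply asserts arity at most $n^2$ without explanation) and you explicitly account for the repeated trials needed in Step~1 to obtain smoothness, which the paper glosses over.
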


\begin{proof}
The cost of the initial decomposition is in $2^{O(l)}$ where $l = n^{1/2}$ is the block size of the BKZ reduction. 
The depth of the decomposition tree arising from the $\qg$-descent is $\frac{1}{2\varepsilon}$ which is 
a constant. The arity of this tree is at most $n^2$, so Algorithm~\ref{alg:one_round} is 
called at most polynomially many times. 
\end{proof}

\section{Practical improvements to the computation of $\Cl(\OK)$ and the resolution of the PIP}\label{sec:improvements}

The subexponential methods we presented in Section~\ref{sec:CL_group} and Section~\ref{sec:q_desc} have 
a heuristic asymptotic run time in $2^{O(n^{1/2+\varepsilon})}$, and depending on the choices made for 
implementation, the practical performances of these algorithms can vary significantly. In this section, we 
present some practical improvements which do not affect the asymptotic complexity, but which impact 
the practical run time. As the computation of the ideal class group and the resolution of the 
Principal Ideal Problem are widely studied problems, there are many existing improvements that readily 
apply to our algorithm, and many of them are folklore. For example, for each relation $\prod_i\pg_i = (\alpha)$ 
we find, we immediately get an additional $|\operatorname{Gal}(K/\Q)|-1$ others by including 
$\prod_i\pg_i^{\sigma} = (\sigma(\alpha))$ for all $\sigma \in  \operatorname{Gal}(K/\Q)$. This 
method is present in the class group algorithm available in both PARI~\cite{pari} and Magma~\cite{magma}. The large prime variants are 
another folklore practical improvement. It was originally described in the context of integer factorization~\cite{Lenstra:2LP,Lenstra:3LP}, 
and was successfully adapted to the resolution of the DLP in finite fields, and in the Jacobian of curves. 
The single large prime variant for computing $\Cl(\OK)$ was first presented by Jacobson~\cite{JacobsonPhd} while 
the double large prime variant was first successfully used for computing class groups by Biasse~\cite{biasse}. Other improvements 
impacting the practical performances of our methods include quadratic~\cite{Jdl} and lattice~\cite{biasse_fieker} 
sieving, as well as optimized methods of computing the HNF and the SNF of a large integer matrix. In this section, we 
restrict ourselves to improvements that are very specific to the settings of the algorithm. More specifically, 
we develop two points:
\begin{itemize}
 \item We show how to efficiently reduce the resolution of the PIP in $\Cl(\OO_{\Q(\zeta_N)})$ to an instance of 
 the PIP in $\Cl(\OO_{\Q(\zeta_N)^+})$.
 \item We show how to enhance the relation search by looking for elements of small norm near the cyclotomic units. 
 \item We show how to improve the basic step of the $\qg$-descent by enumerating many short elements in a given $\qg$. 
\end{itemize}

\paragraph{\textbf{Solving the PIP in the maximal real subfield}}

Using subfields (and in particular the maximal real subfield $K^+$) is folklore. Several references use a variant 
around this idea. We specifically rely on the Gentry-Szydlo method~\cite{Gentry_Szydlo} 
and its extension by Howgrave-Graham and Szydlo~\cite{szydlo_norm_eq} for solving norm 
equations of the form $\Nm_{\Q(\zeta_N)/\Q(\zeta_N)^+}(x) = g$. Halving the degree of the 
field can have a significant impact on the practical behavior of the PIP algorithm. 
The input size which is given by $\log(|\Delta|)$ is halved in the case of a power of 
two cyclotomic. In addition, the LLL algorithm has a practical behavior significantly 
better than the worst case estimates for lower dimensions. Indeed, it is expected to 
return a basis $(b_i)$ such that $\frac{\|b_1\|}{\det(L)^{1/d}}\approx (1.02)^d$ (see~\cite{LLL_average}). This 
makes a difference in practice during a relation search based on the search for small elements 
in LLL-reduced bases of ideals\footnote{In $K^+$ the height of the defining polynomial is higher, therefore other 
relation search methods may be used}. 

Given an input ideal $I \subseteq \OK$, we want to find a generator of the ideal $I'$ generated 
by $\Nm_{K/K^+}(I)$ in $\OO_{K^+}$. The ideal $I'$ is principal. 
However, the norm map is not surjective. This means that there can be a generator 
of $I'$ that is not the norm of a generator of $I$. If we solve the PIP for $I'$ in $\OO_{K^+}$ and 
find $g\in\OO_{K^+}$ such that $I' = (g)$, then 
we need to find another generator $g'$ that is the norm of an element in $K$. To proceed, 
we find the right unit $u\in\OO_{K^+}^*$ such that $g' = ug$ is totally positive. Given a 
set of fundamental units $(u_1,\cdots,u_r)$ for $\OO_{K^+}^*$, this can be done by solving 
a linear system in $\F_2$. For $\alpha\in K^+$, let $\delta(\alpha)\in\F_2^r$ be the signature of 
$x$, i.e. $\delta(\alpha)_i = 0$ if $\sigma_i(\alpha)\geq 0$, and $\delta(\alpha)_i = 1$ otherwise. Let 
$M = (\delta(u_i)_j)_{i,j\leq r}$, then the right product of the $u_i$ turning $g$ into a totally 
positive number is $u = \prod_i u_i^{x_i}$ where $(x_1,\cdots,x_r)M = \delta(g)$. 

\begin{proposition}
When $K$ is of the form $K = \Q(\zeta_{2^s})$, there is always a solution to the system  $(x_1,\cdots,x_r)M = \delta(g)$.
\end{proposition}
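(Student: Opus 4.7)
The plan is to show that $\delta(g)$ always lies in the $\F_2$-row span of $M$. This reduces to two points: first, that $\delta(g)$ agrees with the signature of some unit of $\OO_{K^+}^*$; and second, that the signatures of the fundamental units span the relevant $\F_2$-space when $K = \Q(\zeta_{2^s})$.

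For the first point, I would observe that $I' = \Nm_{K/K^+}(I)\,\OO_{K^+}$ admits a totally positive generator. Indeed, if $I = (\alpha)\OK$ then $\beta := \Nm_{K/K^+}(\alpha) = \alpha\bar\alpha$ generates $I'$, and for each real embedding $\sigma$ of $K^+$, extending to complex embeddings $\tilde\sigma,\overline{\tilde\sigma}$ of $K$, one computes $\sigma(\beta) = |\tilde\sigma(\alpha)|^2 > 0$. Any other generator therefore has the form $g = u\beta$ with $u \in \OO_{K^+}^*$, giving $\delta(g) = \delta(u)$. Decomposing $u = (-1)^{\varepsilon}\prod_i u_i^{a_i}$ against $\OO_{K^+}^* = \{\pm 1\}\times\bigoplus_i \langle u_i\rangle$ yields
$$\delta(g) \equiv \varepsilon\cdot(1,\ldots,1) + \sum_i a_i\,\delta(u_i) \pmod{2}.$$
Replacing $g$ by $-g$ if necessary, which is also a valid generator of $I'$, one can assume $\varepsilon = 0$, and then $\vec{x} := (a_1,\ldots,a_r)$ solves $\vec{x}\, M = \delta(g)$.

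The main obstacle is to justify that the reduction $\varepsilon = 0$ is compatible with the system being posed in $\F_2^r$ rather than $\F_2^{r_1}$, which amounts to checking that $\delta(u_1),\ldots,\delta(u_r)$ are $\F_2$-linearly independent so that $M$ has full rank. For $K = \Q(\zeta_{2^s})$ this is provided by the explicit structure of the real cyclotomic units $\eta_a := \zeta_{2^s}^{(1-a)/2}(1-\zeta_{2^s}^a)/(1-\zeta_{2^s})$ for odd $a$, whose signatures together with $\delta(-1)$ span all of $\F_2^{r_1}$. After quotienting by $\delta(-1)$ the $\delta(u_i)$ thus form a basis of $\F_2^r$, guaranteeing that $M$ is invertible and that a solution $\vec{x}$ always exists. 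This independence reflects the classical fact that the narrow and ordinary class groups of $\Q(\zeta_{2^s})^+$ coincide, which is the structural reason the statement is special to $2$-power conductors.
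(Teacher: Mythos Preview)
Your proof is correct but considerably more elaborate than the paper's. The paper gives a one-line argument: it cites a result stating that the units of (the real subfield of) $\Q(\zeta_{2^s})$ realize every possible signature, whence any target vector lies in the $\F_2$-span of the signatures of a generating set of units, and the system is solvable. You instead begin by showing that $\delta(g)$ coincides with $\delta(u)$ for some unit $u$, via the totally positive generator $\beta = \alpha\bar\alpha$ of $I'$; this observation is valid but redundant once surjectivity of the signature map on units is granted, and it is in fact the opening step in the paper's proof of the \emph{next} proposition rather than this one. Your handling of the $-1$ component and the passage to the quotient $\F_2^r$ is more explicit than the paper's treatment, and your closing remark that the narrow and ordinary class groups of $\Q(\zeta_{2^s})^+$ coincide is precisely the structural fact underlying the cited result. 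Both arguments therefore rest on the same external input; yours just unpacks more of the surrounding machinery.
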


\begin{proof}
According to~\cite[Prop. 1]{signature_unit}, there are units of arbitrary signature in $\Q(\zeta_{2^s})$. Therefore, 
there must be a linear combination of the signatures of the units in the generating set that matches $\delta(g)$. 
\end{proof}

\begin{proposition}
When $K = \Q(\zeta_{2^s})$, a totally positive generator $g'$ of $I'$ is necessarily the norm of a generator of $I$. 
\end{proposition}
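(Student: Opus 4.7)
The plan is to start with an arbitrary generator of $I$ and then correct it by a suitable unit of $\OK$. Pick any $h_0\in\OK$ with $(h_0)=I$. Since for a CM extension $\Nm_{K/K^+}(I)\cdot\OK = I\overline{I}$ (checked prime by prime by separating the split, inert and ramified cases of $K/K^+$), the element $h_0\overline{h_0}$ generates $I'$ viewed inside $\OO_{K^+}$. Hence $g' = \eta\cdot h_0\overline{h_0}$ for some $\eta\in\OO_{K^+}^*$. At each real embedding $\sigma$ of $K^+$ lifted to a complex embedding $\tau$ of $K$, one has $\sigma(h_0\overline{h_0})=|\tau(h_0)|^2>0$ and $\sigma(g')>0$ by hypothesis, so $\eta$ is totally positive. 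If I can find $w\in\OK^*$ with $\Nm_{K/K^+}(w)=\eta$, then $h:=wh_0$ generates $I$ and satisfies $\Nm_{K/K^+}(h)=\eta\cdot h_0\overline{h_0}=g'$, finishing the proof.

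The core of the argument is therefore to show that every totally positive unit of $\OO_{K^+}^*$ is a norm from $\OK^*$ when $K=\Q(\zeta_{2^s})$. I first determine the image $\Nm_{K/K^+}(\OK^*)$. For $K=\Q(\zeta_{p^s})$ with $p$ prime, the Hasse unit index $Q=[\OK^*:\mu_K\cdot\OO_{K^+}^*]$ equals $1$ (\cite[Cor.~4.13]{Washington}), so $\OK^*=\mu_K\cdot\OO_{K^+}^*$. Since $\overline{\zeta}=\zeta^{-1}$ for every root of unity, $\Nm_{K/K^+}(\zeta\cdot u^+)=(u^+)^2$, and thus $\Nm_{K/K^+}(\OK^*)=(\OO_{K^+}^*)^2$.

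It remains to show that every totally positive unit of $\OO_{K^+}^*$ is a square, which is where the result from~\cite{signature_unit} used in the previous proposition enters: the signature map $\delta\colon\OO_{K^+}^*\to\{\pm 1\}^{n/2}$ is surjective. By Dirichlet's unit theorem $\OO_{K^+}^*/(\OO_{K^+}^*)^2\simeq(\Z/2\Z)^{n/2}$, which has the same cardinality as the codomain, so $\delta$ induces an isomorphism and its kernel, the set of totally positive units, equals $(\OO_{K^+}^*)^2$. Combined with the previous paragraph, $\eta\in(\OO_{K^+}^*)^2=\Nm_{K/K^+}(\OK^*)$, producing the required $w$. The only mildly technical step is the identity $\Nm_{K/K^+}(I)\cdot\OK=I\overline{I}$ used at the outset; everything else is a direct assembly of the unit-group structure specific to prime-power cyclotomic fields together with the signature surjectivity already invoked in the paper.
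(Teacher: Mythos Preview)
Your proof is correct and follows essentially the same route as the paper: start from $\Nm_{K/K^+}(h_0)=h_0\overline{h_0}$ as a reference totally positive generator of $I'$, observe that $g'$ differs from it by a totally positive unit of $\OO_{K^+}$, and then use that in $\Q(\zeta_{2^s})^+$ the totally positive units coincide with the squares (hence with $\Nm_{K/K^+}(\OK^*)$). The paper simply cites \cite{squares_unit,weber} for the equality ``totally positive units $=$ squares'', whereas you rederive it from the signature surjectivity of \cite{signature_unit} together with the index count $[\OO_{K^+}^*:(\OO_{K^+}^*)^2]=2^{n/2}$; note incidentally that the detour through $\Nm_{K/K^+}(I)\cdot\OK=I\overline{I}$ is unnecessary, since for principal $I$ one has directly $\Nm_{K/K^+}((h_0))=(h_0\overline{h_0})$ as $\OO_{K^+}$-ideals.
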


\begin{proof}
$I'$ is generated by at least one 
totally positive number (i.e. the image $\Nm_{K/K^+}(g_0)$ of a generator $g_0$ of $I$ by the norm map). 
Then from~\cite{squares_unit,weber}, 
we know that the totally positive units are exactly the squares of units, which are also the norms 
of the units of $\OK$ that are in $\OO_{K^+}$. The two totally positive generators $g',\Nm_{K/K^+}(g_0)$ of $I'$ differ by a 
totally positive unit, hence a square, and hence the image of a unit $u_0$ of $\OK\cap\OO_{K^+}$ by the norm map, 
i.e. $g' = \Nm_{K/K^+}(u_0)\Nm_{K/K^+}(g_0) = \Nm_{K/K^+}(u_0g_0)$, which is the image of a generator of 
$I$. 
\end{proof}

Before applying the Howgrave-Graham and Szydlo~\cite{szydlo_norm_eq} norm equation resolution method, 
we need to make sure that the input is polynomially bounded, which is not guaranteed if we take 
an arbitrary solution to the PIP in $\OO_{K^+}$ (even after adjusting the signature). However, we 
know the existence of a short (totally positive) generator of $I'$, namely the norm of a short generator 
of $I$. We are facing an instance of the Bounded Distance Decoding problem similar to the one solved 
by Cramer et al.~\cite{CVP_cyclo}. The logarithms of the norms of the cyclotomic units enjoy similar geometric properties 
as the logarithms of the cyclotomic units themselves. Therefore, applying the method of~\cite{CVP_cyclo} 
on $g'$ yields a small generator $g''$ of $I'$ on which we can apply the algorithm for the resolution of the 
norm equation $\Nm_{K/K^+}(x) = g''$ given in~\cite{szydlo_norm_eq}.

\begin{algorithm}[ht]
\caption{Reduction from the PIP in $\OK$ to the PIP in $\OO_{K^+}$}
\begin{algorithmic}[1]\label{alg:PIP_subfield}
\REQUIRE $I\subseteq\OK$, $I'=\Nm_{K/K^+}(I)\OO_{K^+}$, $ \OO_{K^+}^*= \langle u_1,\cdots,u_r\rangle$, $g$ with $I'=(g)$. 
\ENSURE $g_0\in\OK$ with $I = (g_0)$. 
\STATE $M\leftarrow (\delta(u_i)_j)_{i,j\leq r}$, $y\leftarrow \delta(g)$. 
\STATE Solve $(x_1,\cdots,x_r)M = y$. $g'\leftarrow \prod_i u_i^{x_i}$. 
\STATE Find a close vector $\Log(u)$ to $\Log(g')$ in $\mathcal{L} = \Z\Log(u'_1)+\cdots+\Z\Log(u'_r)$ where the $u'_i$ are the norms of the 
cyclotomic units of $\OK$ by using the methods of~\cite{CVP_cyclo}.
\STATE $g''\leftarrow g'\cdot u^{-1}$. 
\STATE Solve $\Nm_{K/K^+}(g_0) = g''$. 
\RETURN $g_0$.
\end{algorithmic}
\end{algorithm}

\paragraph{\textbf{Search for relations around the cyclotomic units}}

Our relation search method to compute $\Cl(\OK)$ consists of drawing polynomials $f(\zeta_N)$ 
with small coefficients at random and to check the norm of the resulting algebraic 
integer for smoothness. As we saw, the algebraic norm is bounded from above by a function of 
the length of the vector of coefficients of $f(X)$. Therefore, it is natural to search 
elements represented by a very small coefficient vector. 

We observe that $\Nm(f(\zeta_N))$ is a polynomial in the coefficients of $f$, and is therefore 
a continuous function. In cyclotomic fields, we know a set of minimums for this function before 
hand: the cyclotomic units $u_a = \frac{\zeta_N^a - 1}{\zeta_N -1}$. For each $a$, $\Nm(u_a) = 1$, 
and we expect small variations around the $u_a$ to yield algebraic numbers of small norm, 
although the coefficient vector of the corresponding polynomials might be long. In Table~\ref{tab:num_ev}, 
we compared the strategy consisting of drawing algebraic integers that are small variations around 
the cyclotomic units to the sampling of random coefficient vectors. We drew coefficient vectors 
of the same Hamming weight $w = 10,20,30,50,75,100$ in $K = \Q(\zeta_{2^{512}})$. 
For each $w$, we drew 100 random coefficient vectors 
of coefficients in $\{0,1\}$ (denoted by ``Random vectors''), and we drew 100 elements that differed 
from a cyclotomic unit $u$ of weight $w$ by a term in $\zeta_N^i$ for some $i$.

\begin{table}[ht]
\caption{Average $\log(\Nm(\alpha))$}
\label{tab:num_ev}
%\small 
\begin{center}
 \begin{tabular}{|r|r|r|}
\hline
\multicolumn{1}{|r|}{Weight} & 
\multicolumn{1}{r|}{Random vectors}& 
\multicolumn{1}{|r|}{Unit variations}  \\
\hline
10 & 301 & 154 \\
20 & 430 & 156 \\
30 & 503 & 158 \\
50 & 586 & 159 \\
75 & 638 & 156 \\
100 & 674 & 154 \\
\hline
\end{tabular}
\end{center}
\end{table}

In Table~\ref{tab:num_ev}, we observe that the size of the elements $\alpha$ 
increases as the Hamming weight of their coefficient vector over the power 
basis gets larger. Meanwhile, at a comparable Hamming weight, the size of a small 
variation around a cyclotomic unit does not seem to be affected. In fact, it is 
the distance to the unit that seems to rule the size of $\alpha$. For example, we 
measured small variations around $u_{101}$ (of Hamming weight 101).
\begin{itemize}
 \item $\log(u_{101} + \zeta_N^3) = 151$.
 \item $\log(u_{101} + \zeta_N^3 -  \zeta_N^{150}) = 204$.
 \item $\log(u_{101} + \zeta_N^3 + \zeta_N^{60} - \zeta_N^{150}) = 238$.
 \item $\log(u_{101} + \zeta_N^3 + \zeta_N^{60} - \zeta_N^{150} + \zeta_N^{200}) = 283$.
\end{itemize}
If we draw $\alpha'$ and $\alpha$ at random such that $\log(|\Nm(\alpha')|)\leq \frac{|\log(\Nm(\alpha))|}{d}$ for some $d > 0$, 
then the expected time $T'$ to find a smooth $\alpha'$ satisfies $T'\leq \sqrt[d]{T}$ where $T$ is 
the expected time to find a smooth $\alpha$. Our numerical results indicate that the unit variation method 
should provide a significant speed-up over the random vectors method even when we allow a Hamming distance larger 
than one.

\paragraph{\textbf{Faster enumeration of short elements in $\qg$}} 

During the $\qg$-descent, the decomposition of an ideal $\qg$ of norm less than $2^{n^b}$ with respect to ideals 
of norm less than $2^{n^{b-\varepsilon}}$ is obtained by multiplying it by random products of primes $\pg_i$ with small 
norm and BKZ-reduce $I := \qg\prod_i\pg_i^{e_i}$. When $I$ is  $2^{n^{b-\varepsilon}}$-smooth, this yields a smooth decomposition 
of $\qg$. The BKZ algorithm has to be used every time a different $I$ is calculated. Each of these calls to BKZ is 
expensive (it has the same subexponential complexity as the rest of the algorithm). In practice, we prefer to 
enumerate short vectors directly in $\qg$. Indeed, the BKZ reduction usually produces a fair amount of short vectors $\vec{b}_1,\cdots,\vec{b}_k$ in 
$\qg$, and we can obtain many short elements by taking short linear combinations $\sum_ia_i\vec{b}_i$ where $|a_i|\leq A$ for a 
well-chosen $A$. 

\begin{proposition}[under Heuristic~\ref{heur:lambda_1}]\label{prop:short_elt_q}
Let $\frac{2}{5} < a \leq \frac{1}{2}$, and $n^a \leq k \leq n$. Then for any $a > a_1>0$ such that $n^{a_1}\ll k$, for a large enough $n$,  
the BKZ algorithm with block size $l$ returns $m = n^{a - a_1}$ independent vectors $\vec{b}_i\in \qg$ such that 
$$\|\vec{b}_i\| \leq A l^{\frac{k}{2l}} \Nm(\qg)^{1/k},$$
where $A = 2^{n^{a_1}}$. In particular, by taking $(a_i)_{i\leq m}\in [-A,A]^m$, we obtain $2^{n^a}$ vectors $\vec{b} = \sum_ia_i\vec{b}_i\in\qg$ such that 
$$\|\vec{b}\| \leq \sqrt{n}A l^{\frac{k}{2l}} \Nm(\qg)^{1/k}.$$
\end{proposition}

\noindent To prove Proposition~\ref{prop:short_elt_q} given $n^a \leq k \leq n$, we need to assume a heuristic on the length of the shortest vector 
of the lattice $\mathcal{L}_q\in\Z^n$ generated by the coefficients of the upper 
$k\times k$ corner of the HNF of the lattice of the coefficient of the elements of 
$\qg$ on the power basis. 
$$\mathcal{L}_q := \Z v_{1,1} + \Z (v_{2,2}\zeta_N + v_{2,1}) + \cdots + \Z(v_{k,k}\zeta_N^k + v_{k,k-1}\zeta_N^{k-1}+\cdots + v_k)\subseteq \qg,$$
where $N = q^s$ is the conductor of $K$. The coefficients $v_{i,j}$ are given 
by the HNF of the $\Z$-basis of $\qg$ which has the shape
\[ H:=\left( \begin{array}{ccccc}
v_{1,1}    &  0    & \hdots & \hdots &     0    \\
v_{2,1}    &  v_{2,2}    & \ddots &        &    \vdots \\
\vdots &  0    & \ddots & \ddots &    \vdots \\
\vdots & \vdots& \ddots &  \ddots&      0    \\
v_{N,1}& 0       & \hdots & 0      &   v_{N,N}\\
\end{array} \right).\] 
For Proposition~\ref{prop:short_elt_q}, we need to assume that the first minima $\lambda_1$ of $\mathcal{L}_q$ is at least 
$\det(\mathcal{L}_q)^{1/k}$. This is a much weaker assumption than the Gaussian heuristics which state that 
$\lambda_1\geq \sqrt{\frac{k}{2e\pi}}\det(\mathcal{L}_q)^{1/k}$. Moreover, we can actually prove this statement in the 
special case $k=n$ due to the algebraic structure of $\qg$. 

\begin{lemma}
Let $\lambda_1$ be the length of a shortest vector of $\mathcal{L}_q$ for $k=n$. Then $\lambda_1\geq \det(\mathcal{L}_q)^{1/n}$.
\end{lemma}

\begin{proof}
Let $(a_1,\cdots,a_n)$ be a shortest vector of $\mathcal{L}_q$ and $\alpha := \sum_i a_i\zeta_N^i$. On the one hand we have 
$\Nm(\alpha) \geq \Nm(I) \geq \Nm(\mathcal{L}_q)$. From the inequality between the arithmetic and the geometric mean, 
we also get $\|\alpha\| \geq n^{n/2}|\Nm(\alpha)|$, which means that 
$\|\alpha\| \geq \sqrt{n}\det(\mathcal{L}_q)^{1/n}$. From~\cite[Sec. 3.1]{HNF_symb}, we know that 
$\|\alpha\| \leq C_1 \|(a_1,\cdots,a_n)\|_\infty$ for $C_1 := \max_i\|\zeta_N^i\| = \sqrt{n}$ (where $\|x\|_\infty$ denotes the 
infinity norm of $x$). Since 
$\|(a_1,\cdots,a_n)\|_\infty\leq \|(a_1,\cdots,a_n)\|$, we obtain
$$\sqrt{n}\det(\mathcal{L}_q)^{1/n}\leq \|\alpha\|\leq C_1 \|(a_1,\cdots,a_n)\|_\infty  \leq  \sqrt{n}  \|(a_1,\cdots,a_n)\| = \sqrt{n}\lambda_1.$$

\end{proof}

\begin{heuristic}\label{heur:lambda_1}
Let $\lambda_1$ be the length of a shortest vector of $\mathcal{L}_q$. Then $\lambda_1\geq \det(\mathcal{L}_q)^{1/k}$.
\end{heuristic}

\begin{proof}[of Proposition~\ref{prop:short_elt_q}]
Let $\lambda_i$ be the $i$-th minima of the lattice $\mathcal{L}_q\in\Z^n$ generated by the coefficients of the upper 
$k\times k$ corner of the HNF of the lattice of the coefficient of the elements of 
$\qg$ on the power basis. By Minkowski's second theorem, we have 
$$\lambda_1\cdots\lambda_k \vol(K) \leq 2^k \det(\mathcal{L}_q),$$
where $K$ is the unit sphere in $\R^k$. Its volume satisfies $\vol(K) = \frac{\pi^{k/2}}{\Gamma(1 + k/2)}$, which yields
\begin{align*}
\lambda_1\cdots\lambda_k &\leq 2^k \frac{\Gamma(1 + k/2)}{\pi^{k/2}}\det(\mathcal{L}_q)\\
&\sim 2^k \sqrt{2\pi k/2}\left(\frac{k}{2\pi e}\right)^{k/2}\det(\mathcal{L}_q).
\end{align*}
We assume that $\lambda_i \geq \det(\mathcal{L}_q)^{1/k}$. Assume by way of contradiction that 
there are more than $k-m$ minimas $\lambda_i$  such that 
$\lambda_i > A \det(\mathcal{L}_q)^{1/k}$. Then since the other vectors have length at least $\det(\mathcal{L}_q)^{1/k}$, then 
their product satisfies 
$$\lambda_1\cdots\lambda_k > A^{k-m}\det(\mathcal{L}_q).$$
But given the definition of $A$, we have 
$$A^{k-m} = 2^{n^{a_1}k(1 + o(1))} \gg 2^k \sqrt{2\pi k/2}\left(\frac{k}{2\pi e}\right)^{k/2}.$$
Therefore we must have at least $m$ minimas satisfying $\lambda_i\leq A \det(\mathcal{L}_q)^{1/k}$, and a BKZ-reduced basis $(\vec{b}_i)_{i\leq k}$ 
must have its first $m$ vectors satisfying $\|\vec{b}_i\|\leq A l^{\frac{k}{2l}}\det(\mathcal{L}_q)^{1/k}$
\end{proof}

\section{PIP and $\gamma$-SVP in $\Q(\zeta_{p^s})$ with precomputation}\label{sec:short_PIP}

In Section~\ref{sec:q_desc}, we showed how to compute the PIP in heuristic subexponential time $2^{O(n^{1/2+ \varepsilon})}$. 
This provides an attack against schemes relying on the hardness of finding a short generator of a principal ideal 
such as~\cite{fre_smart,mult_maps}. Also, according to~\cite{CVP_cyclo}, the size of a short generator of $I$ resulting from the BDD algorithm 
on the log-unit lattice is within $e^{O(\sqrt{n})}$ of the first minima of the ideal lattice $I$. Moreover, it was recently conjectured~\cite{Stickelberger} that 
for most fields $\Q(\zeta)$, any ideal of $\OO_{\Q(\zeta)}$ was within a short enough ideal multiple from a principal ideal. Therefore, 
solutions to the PIP in $\OO_{\Q(\zeta)}$ yields solution to $\gamma$-SVP in ideals of  $\OO_{\Q(\zeta)}$ for $\gamma \in e^{O(\sqrt{n})}$. 
Since the methods of Section~\ref{sec:q_desc} run in heuristic subexponential time $2^{O(n^{1/2+ \varepsilon})}$, this does not 
offer a better trade-off than using a BKZ reduction with block size in $O(\sqrt{n})$. 

In this section, we show how to leverage a subexponential precomputation to find a short generator 
of ideals of  $\OO_{\Q(\zeta_N)}$ for 
$N = p^s$ in time better than $2^{O(\sqrt{n})}$. As these short generators are solutions to $\gamma$-SVP for 
$\gamma \in e^{O(\sqrt{n})}$ , we thus achieve a better trade-off than the BKZ reduction. To the 
best of our knowledge, this is the first time a classica method for solving $\gamma$-SVP beats the BKZ time/approximation trade-off even with a precomputation on the 
field. The general idea is to use the ideal class group computation given by Algorithm~\ref{alg:class_group} with a larger factor base bound $B$. 
This gives us a basis for all relations between the prime ideals of norm less than $B$. Then given an input ideal $I$, we first compute 
$\alpha$ and $\qg_1,\cdots,\qg_k$ such that $I = (\alpha)\prod_i\qg_i$ with $\Nm(\qg_i)\leq B$, and then we solve the PIP problem for $\prod_i\qg_i$. 
The larger the smoothness bound $B$ is, the more expensive the precomputation gets. Meanwhile all subsequent PIP resolutions using this precomputation 
get faster since their 
run time is dominated by the decomposition in $\Cl(\OK)$ of $I$ as a product of ideals of norm bounded by $B$. 
\begin{enumerate}
 \item Precomputation: Given $B$, find a basis for the lattice $\mathcal{L}$ of vectors $(e_i)_{i\leq k}$ such that $\prod_i \qg_i^{e_i}\sim (1)$ for the $\Nm(\qg_i)\leq B$. 
 \item Decomposition: Given a principal idea; $I$ and $B$, find $(\qg_i)_{i\leq k}$ and $\alpha\in K$ such that $I = (\alpha)\prod_i \qg_i^{e_i}$. 
 \item Resolution: Given the decomposition of $I$ and $\mathcal{L}$, find a solution of $\gamma$-SVP for $I$. 
\end{enumerate}

\paragraph{\textbf{Precomputation step}}

Given the soothness bound $B$, Algorithm~\ref{alg:class_group} returns a generating set $(\vec{b}_1,\cdots,\vec{b}_t)$ of the lattice $\mathcal{L}$ of 
vectors $\vec{x}$ such that $\mathcal{B}^{\vec{x}}\sim (1)$ for $\mathcal{B} = \{ \pg\ \mid \ \Nm(\pg)\leq B\}$ 
together with $\alpha_i\in K$ such that $\mathcal{B}^{\vec{b}_i} = (\alpha_i)$. Our precomputation step additionally 
processes this basis and the $(\alpha_i)$ to return an HNF-reduced basis $(\vec{h}_1,\cdots,\vec{h}_m)$ for $\mathcal{L}$. 
Using~\cite[Prop. 6.3]{Arne}, we can find a unimodular $U\in\operatorname{Gl}_{t\times t}(\Z)$ such that 
\[ UB 
  =\left( \begin{array}{cccc}
h_{11}& 0      & \hdots & 0      \\
\vdots & h_{22}& \ddots & \vdots \\
\vdots & \vdots & \ddots & 0      \\
*      & *      & \hdots & h_{m,m}\\
 0   & \hdots & \hdots & 0 \\
 \vdots &  &  &  \vdots \\
 0 & \hdots & \hdots & 0
 \end{array}
   \right)
\]
is the HNF of $B = (\vec{b}_i)_{i\leq k}$ with $\|U\|\leq \left( \sqrt{m}\|B\|\right)^m$ in time 
$$O\left( tm^{\theta - 1}\log(\beta) + tm\log(m)\operatorname{Mult}(\log(\beta))\right)$$ 
for $\beta =  \left( \sqrt{m}\|B\|\right)^m$, $\operatorname{Mult}(x)$ the complexity of $x$-bit 
integer multiplication, and $2\leq \theta\leq 3$ the matrix multiplication 
exponent. The matrix $H = UB$ has a small essential part. Under GRH, $h_{i,i} = 1$ for $i> 12\log(|\Delta|)^2$. 
We leverage this to facilitate the resolution of the linear system giving the solution to the PIP. 
However, for this to yield a generator (as opposed to just the answer whether or not $I$ is principal), 
we need to compute the $(\beta_i)_{i\leq m}$ such that $\prod_{j\leq t}\alpha_j^{U_{i,j}} = \beta_i$ 
for $i\leq m$. 
As the coefficients of $U$ and the number of terms $m$ in the product are large, we cannot afford to 
write down these algebraic numbers on the power basis. However, we know that they are used to compute 
an element of $I$ whose length is within $e^{\tilde{O}(\sqrt{n})}$ of the first minima $\lambda_1(I)\leq \sqrt{n}|\Delta|^{1/n}\Nm(I)^{1/n}$ 
of the ideal lattice $I$. So we compute $\beta_i\bmod \pg_j$ for a collection of split prime ideals 
$\Pg_1,\cdots,\Pg_s$ such that $\prod_j\Nm(\Pg_j) \leq e^n n^{n/2} |\Delta|\Nm(I)$. 

We also need to keep $\Log(\beta_i)$ as part of the precomputation as they are needed for computing 
a short generator of a principal ideal by solving an instance of the BDD in $\Log(\Z[\zeta_N]^*)$. Each 
of these values satisfies $\Log(\beta_i) = \sum_{j\leq k} U_{i,j}\Log(\alpha_j)$. The logarithm vectors of 
the $\alpha_j$ have polynomial size, but the bit size of the $U_{i,j}$ is in $2^{O(n^\kappa)}$ for $1/2 < \kappa < 1$ 
depending on the parameters of the precomputation. As we are aiming at lowering down the 
cost of subsequent resolutions of the short-PIP which requires the values of $\Log(\beta_i)$, we must 
find different generators corresponding to the rows of $UB$. Let $i_0$ such that 
$\{\pg_1,\cdots,\pg_{i_0}\} = \{\pg\ \mid \ \Nm(\pg) < 12\ln(|\Delta|)^2\}$ and $i_1$ such that 
$\Nm(\pg_{i_1}) > 12\ln(|\Delta|)^2$, 
then $\beta_{i_1}$ is the generator of the principal ideal $\pg_{i_1}\cdot\left( \prod_{j\leq i_o}\pg_j^{h_{i_0,j}}\right) = (\beta_{i_1})\OK$. 
We use the BBD solution in the log-unit lattice of Cramer et al.~\cite{CVP_cyclo} to find $\beta'_{i_1}$ such that 
$\pg_{i_1}\cdot\left( \prod_{j\leq i_o}\pg_j^{h_{i_0,j}}\right) = (\beta'_{i_1})\OK$ and such that the bit size of the representation of 
$\Log(\beta'_{i_1})$ is bounded.

\begin{algorithm}[ht]
\caption{Precomputation step}
\begin{algorithmic}[1]\label{alg:precomputation}
\REQUIRE Split primes $(\Pg_i)_{i\leq s}$, $B > 0$, and conductor $N$. 
\ENSURE $H$ in HNF form with $(\beta_j\bmod \pg_i)$ such that $\mathcal{B}^{H_i} = (\beta_i)\OK$ for $\mathcal{B} = \{ \pg\ \mid \ \Nm(\pg)\leq B\}$, 
and $(\Log(\beta_j))_{j\leq m}$. 
\STATE Compute $\mathcal{B} = \{ \pg\ \mid \ \Nm(\pg)\leq B\}$. Let $m := |\mathcal{B}|$.
\STATE Compute a generating set $\vec{b}_1,\cdots,\vec{b}_t$ of the lattice $\mathcal{L}\subseteq\Z^m$ of vectors $\vec{x}$ such that $\mathcal{B}^{\vec{x}}\sim (1)$ 
and $(\alpha_i)_{i\leq t}$ such that $\mathcal{B}^{\vec{b}_i} = (\alpha_i)$ using Algorithm~\ref{alg:class_group}. 
\STATE Find $U\in \operatorname{Gl}_{t\times t}(\Z)$ such that $U\cdot B = H$ in HNF form for $B = (\vec{b}_i)_{i\leq t}$ using~\cite[Prop. 6.3]{Arne}. 
\STATE \textbf{for} $i\leq t$, compute $\Log(\beta_i) = \sum_j u_{i,j}\Log(\alpha_j)$. 
\STATE \textbf{for} $i\leq t$, $j\leq s$ \textbf{do} Compute $\beta_i \bmod \Pg_j:=\alpha_1^{u_{i,1}}\cdots\alpha_k^{u_{i,k}}\bmod \Pg_j$. 
\STATE Compute  the cyclotomic units $u_1,\cdots,u_r$ of $\Q(\zeta_N)$ . 
\FOR {$1\leq i\leq m$}
\STATE $I_i\leftarrow \prod_{j\leq i_0}\pg_j^{h_{i,j}}$.
\STATE Let $(x_j)_{j\leq r}$ be the output of Algorithm~\ref{alg:CVP} solving the BDD on input $I_i,\Log(\beta_i)$.
\STATE $\beta_j\bmod\Pg_k\leftarrow u_1^{x_1}\cdots u_{r}^{x_r}\beta_j\bmod \Pg_k$ for $j\leq m$, $k\leq s$. 
\STATE $\Log(\beta_j)\leftarrow x_1\Log(u_1)+\cdots+ x_r\Log(u_{r}) + \Log(\beta_j)$ for $j\leq m$.
\ENDFOR
\RETURN $H , (\beta_i\bmod \Pg_j)_{i\leq m,j\leq s}, (\Log(\beta_j))_{j\leq m}$
\end{algorithmic}
\end{algorithm}

\begin{proposition}[GRH + Heuristic~\ref{heuristic:smoothness_ideals}]
Assume that $B \in 2^{O(n^\kappa)} $ for $\kappa \geq 1/2 $, and that $s\in O ( \log(|\Delta|))$, 
$\Nm(\pg_j)\in O(\log(|\Delta|)^2)$, then the heuristic expected run time of 
Algorithm~\ref{alg:precomputation} is in $2^{O(n^\kappa)}$, 
and the bit size of the representation of the $\Log(\beta_j)$ is polynomial 
in $n$. 
\end{proposition}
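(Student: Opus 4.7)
\medskip
\noindent\textbf{Proof proposal.} The plan is to bound the cost of each of the four phases of Algorithm~\ref{alg:precomputation} by $2^{O(n^a)}$, and then to show separately that the output logarithm vectors $\Log(\beta_j)$ retain only polynomial bit-size despite the fact that intermediate quantities (entries of $U$, entries of $\Log(\beta_j^{(0)})$) are allowed to grow to bit-size $2^{O(n^a)}$. Throughout, I set $m = |\mathcal{B}| \in 2^{O(n^a)}$ and note that the number $k$ of sampled relations in Step~1 is of the same order.

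For Step~1, the corollary of Proposition~\ref{prop:class_group} gives the relation search in heuristic time $2^{O(n^a)}$ when the smoothness bound is $B \in 2^{O(n^a)}$, along with $\alpha_1,\ldots,\alpha_k$ whose valuation vectors $\vec{b}_i$ have entries bounded by $\log \Nm(\alpha_i) \in O(n\log n)$; in particular $\|B\|$ is polynomial in $n$. For Step~2, I invoke~\cite[Prop.~6.3]{Arne} with $\beta = (\sqrt{m}\|B\|)^m$, so $\log \beta \in 2^{O(n^a)}$ and the HNF cost $O(km^{\theta-1}\log\beta + km\log m\cdot\operatorname{Mult}(\log\beta))$ is in $2^{O(n^a)}$; a fortiori the bit-size of each entry of $U$ is in $2^{O(n^a)}$. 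For Step~3, each of the $kl \in 2^{O(n^a)}$ pairs $(i,j)$ requires $k$ modular exponentiations $\alpha_t^{u_{i,t}} \bmod \pg_j$; since $\log |u_{i,t}| \in 2^{O(n^a)}$ and each multiplication mod $\pg_j$ is polynomial in $n$ (because $\Nm(\pg_j) \in O(\log|\Delta|)$), the total cost is again in $2^{O(n^a)}$.

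For Step~4, the loop runs $m \in 2^{O(n^a)}$ times. At iteration $i$, I first compute $\Log(\beta_i^{(0)}) = \sum_t u_{i,t}\Log(\alpha_t)$, whose coordinates have bit-size bounded by $\log\|U\| + O(\log n) \in 2^{O(n^a)}$; this costs $2^{O(n^a)}$ per coordinate. Algorithm~\ref{alg:CVP} is the Babai round-off applied to this target against the cyclotomic-unit basis, which runs in time polynomial in the bit-size of its input, hence in $2^{O(n^a)}$ per call. The resulting coefficient vector $(x_1,\ldots,x_r)$ has entries of bit-size $2^{O(n^a)}$, so the updates of $\beta_j \bmod \pg_k$ (at most $ml$ many) and of $\Log(\beta_j)$ also fit in $2^{O(n^a)}$ total. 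For the bit-size of the \emph{output}: after the BDD subtraction, $\Log(\beta_j)$ is the logarithm embedding of a generator within $e^{\tilde{O}(\sqrt{n})}$ of the shortest generator of $I_i = \prod_t \pg_t^{h_{i,t}}$, by Theorem~4.1 of~\cite{CVP_cyclo}. Under GRH, $h_{i,i}=1$ for $i > 12\ln^2|\Delta|$ and the off-diagonal entries in the essential part are HNF-reduced modulo small diagonal coefficients, so $\Nm(I_i) \leq 2^{\operatorname{poly}(n)}$; a shortest generator therefore has length at most $\sqrt{n}|\Delta|^{1/n}\Nm(I_i)^{1/n} \in 2^{\operatorname{poly}(n)}$, and each coordinate of $\Log(\beta_j)$ is bounded by $O(\operatorname{poly}(n))$ in absolute value, i.e.\ representable in polynomial bit-size.

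The main obstacle is the second claim, not the time bound: while it is routine to check that the BDD returns a close vector, what must be guaranteed is that $\Log(\beta_j^{(0)}) - \sum_t x_t \Log(u_t)$ collapses from bit-size $2^{O(n^a)}$ down to polynomial bit-size. This relies essentially on the sharp geometric control of the cyclotomic-unit lattice proved in~\cite{CVP_cyclo}, combined with a careful estimate of $\Nm(I_i)$ via the HNF-reduction modulo the (polynomially-bounded) diagonal $h_{i,i}$ in the essential part, so that the short generator whose logarithm one recovers is itself of polynomial logarithmic height.
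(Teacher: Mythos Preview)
Your runtime analysis is sound and in fact more careful than the paper's, which simply asserts that the cost is ``clearly dominated'' by the relation search and the HNF computation. The breakdown of Steps~1--4 into $2^{O(n^a)}$ each is correct.

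The gap is in the bit-size argument for the output $\Log(\beta_j)$. You assert that the diagonal entries $h_{i,i}$ in the essential part of the HNF are ``polynomially bounded'', and conclude $\Nm(I_i)\leq 2^{\operatorname{poly}(n)}$ and hence that each coordinate of $\Log(\beta_j)$ has magnitude $O(\operatorname{poly}(n))$. This is not justified: the product $\prod_{i\leq i_0}h_{i,i}$ equals the class number $h=|\Cl(\OK)|$, and for prime-power cyclotomics $h^{-}$ grows exponentially in $n$ (Kummer's asymptotic already gives $h^-(\Q(\zeta_p))\sim 2p(p/4\pi^2)^{(p-1)/4}$). So individual $h_{i,j}$ in the essential block are only bounded by $h\in O(\sqrt{|\Delta|})$, and $\log\Nm(I_i)$ can be as large as $\tilde O(\sqrt{|\Delta|})$, which is exponential in $n$, not polynomial.

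The paper's route is different: it accepts that $\Nm(I_i)\in 2^{\tilde O(|\Delta|)}$ and that the short generator $\beta_i$ returned by the BDD step satisfies $\|\beta_i\|\in 2^{\tilde O(|\Delta|)}$. It then bounds both $\max_\sigma|\sigma(\beta_i)|\leq\|\beta_i\|$ and, crucially, $\min_\sigma|\sigma(\beta_i)|\geq |\Nm(\beta_i)|/(\max_\sigma|\sigma(\beta_i)|)^{n-1}$, obtaining $|\ln|\sigma(\beta_i)||\in\tilde O(n|\Delta|)$. The point is that the \emph{magnitude} of each coordinate of $\Log(\beta_i)$ may be exponential in $n$, but the \emph{bit size} needed to store such a real number is $O(\log(n|\Delta|))=O(\log|\Delta|)=\tilde O(n)$, which is polynomial. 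Your argument collapses these two notions (magnitude versus bit-size of the representation) and thereby overclaims. You also omit the lower bound on $\min_\sigma|\sigma(\beta_i)|$, without which nothing prevents some coordinate $\ln|\sigma(\beta_i)|$ from being arbitrarily negative.
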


\begin{proof}
The run time of Algorithm~\ref{alg:precomputation} is clearly dominated by the cost of the search for 
relations and the computation of the HNF of the relation matrix (together with the premultipliers). 
We need to bound the $\Log(\beta_i)$. 
 The upper bound on a generator $\beta_i$ of the principal 
 ideal $I_i = \prod_{j\leq m}\pg_j^{h_{i,j}}$ is essentially given by the 
norm of the integral ideal $I_i$. When $i\leq i_0$, $I_i$ has the shape $I_i = \prod_{j\leq i_0}\pg_j^{h_{i_0,j}}$ 
while when $i > i_0$, $I_i$ is of the form $I_{i} := \pg_{i}\cdot\left( \prod_{j\leq i_0}\pg_j^{h_{i_0,j}}\right)$. 
For each $j\leq i_0$, $\Nm(\pg_j)\leq 12\ln(|\Delta|)^2$ while $\Nm(\pg_i)\in 2^{O(n^\kappa)}$ if $i > i_0$ and 
$h_{i,j}\leq |\Cl(\OK)|\in O(\sqrt{|\Delta|})$ for $i,j\leq i_0$. Therefore, in any case $\Nm(I_i)\in 2^{\tilde{O}(|\Delta|)}$, 
and $\|\beta_i\|\leq 2^{\tilde{O}(n^{1/2})}\Nm(I_i)^{1/n}\in 2^{\tilde{O}(|\Delta|)}$. For each $\sigma\in\operatorname{Gal}(K/\Q)$, 
$\max_\sigma|\sigma(\beta_i)|\leq \|\beta_i\|\in 2^{\tilde{O}(|\Delta|)}$, and 
$\min_\sigma|\sigma(\beta_i)|\geq \frac{|\Nm(\beta_i)|}{\left(\max_\sigma|\sigma(\beta_i)|\right)^{n-1}} \geq \frac{1}{2^{\tilde{O}(n|\Delta|)}}$. 
Therefore, for all $\sigma\in \operatorname{Gal}(K/\Q)$, $|\ln(|\sigma(\beta_i))|\in \tilde{O}(|\Delta|)$, and the representation of 
the vector $\Log(\beta_i)$ has a polynomial bit size in $n$. 
\end{proof}

\paragraph{\textbf{Decomposing $I$}}

\begin{comment}
More specifically, by spending a precomputation time in $2^{O(n^{2-3c+\varepsilon})}$ for an arbitrarily small $\varepsilon>0$, we can solve the 
PIP on input ideals $I$ with $\Nm(I)\leq 2^{n^b}$ in time $2^{O\left(n^{c}\right)}$ when 
\begin{enumerate}
 \item $b\leq 7c - 2$.
 \item $\frac{2}{5} < c < \frac{1}{2}$. 
\end{enumerate} 
\end{comment}

The second step of our search for a generator of a principal ideal $I$ satisfying $\Nm(I)\leq 2^{n^b}$ is to break it down as a product of ideals of 
norm less than $ B \in 2^{O(n^{2-3a+\varepsilon})}$ for an arbitrarily small $\varepsilon>0$ in time $2^{O(n^{a+o(1)})}$ where 
\begin{enumerate}
 \item $2-3a + 2\varepsilon \leq b\leq 7a - 2$.
 \item $\frac{2}{5} < a < \frac{1}{2}$. 
\end{enumerate} 
We proceed by a $\qg$-descent procedure similar to that of Section~\ref{sec:q_desc}. Given a 
prime ideal $\qg$ such that $\log(\Nm(\qg)) \leq n^b$, 
we look for $\alpha\in\qg$ such that 
$(\alpha)/\qg = \prod\qg_i$ where the $\qg_i$ are prime ideals satisfying $\log(\Nm(\qg_i))\leq n^{b - \varepsilon}$. 
We look for small vectors of $\mathcal{L}_I$ with the same definition of $\mathcal{L}_I$ as in Section~\ref{sec:q_desc} 
by using the BKZ reduction method with block size $l := n^{a}$ and 
by parameterizing $\mathcal{L}_q$ with the degree $k := n^{\min\left\{4a-1,b+2a-1-\varepsilon\right\}}$. 

\begin{lemma}\label{lemma:smooth_BKZ}
Let $l\leq k\leq n$. By using a BKZ reduction on $\mathcal{L}_I$, we can find a vector  
in $\mathcal{L}_I$ of length less than $l^{k/2l}\Nm(I)^{\frac{1}{k}}$ in time $2^{O(l)}$.  
\end{lemma}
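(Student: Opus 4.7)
The plan is to follow the proof of Lemma~\ref{lemma:smooth} almost verbatim, replacing the HKZ reduction of $\mathcal{L}_I$ by a BKZ reduction with block size $l$. First I would observe that $\mathcal{L}_I$ is a rank-$k$ lattice whose basis matrix is the upper-left $k\times k$ submatrix of the HNF $H$ of $I$; exactly as in the HKZ argument, its determinant satisfies
\[
\det(\mathcal{L}_I) \;\leq\; \prod_{i \leq k} v_{i,i} \;\leq\; \prod_{i} v_{i,i} \;=\; \Nm(I).
\]

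Next I would invoke the standard Hermite-factor bound for BKZ applied to a lattice of rank $k$ with block size $l$: Schnorr's analysis, together with the bound $\gamma_l \leq l$ on Hermite's constant of the local blocks, produces a basis whose first vector has length at most $l^{k/(2l)}\det(\mathcal{L}_I)^{1/k}$. Substituting the determinant bound gives precisely the claimed estimate $\|\vec{b}_1\|\leq l^{k/(2l)}\Nm(I)^{1/k}$.

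For the running time I would appeal to the BKZ complexity recalled in the background, namely $2^{O(l)}\operatorname{Poly}(k,\log\|A\|)$ where $A$ is the input basis matrix. Since $k\leq n$ and the HNF coefficients of $I$ have bit size polynomial in $n$ and $\log\Nm(I)$, these polynomial factors are absorbed into $2^{O(l)}$ (in the intended applications $l$ is polynomial in $n$, so $\operatorname{Poly}(n)=2^{O(\log n)}=2^{O(l)}$).

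The main subtlety to flag is that the Hermite factor recalled in the background section is stated in the looser form $O(k^{n/k})$, whereas here we need the sharper $l^{k/(2l)}$ bound coming from Schnorr's block-wise analysis via $\gamma_l\leq l$. I do not expect any real difficulty, just the need for a short pointer to this refined form of the BKZ guarantee so that the quantitative improvement over the HKZ version (Lemma~\ref{lemma:smooth}) is transparent in the intended regime $l\ll k$.
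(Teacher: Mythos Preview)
Your proposal is correct and follows essentially the same approach as the paper: bound $\det(\mathcal{L}_I)\leq\Nm(I)$ via the upper-left $k\times k$ block of the HNF, then apply the BKZ Hermite-factor bound with block size $l$. The paper's proof is in fact terser than yours and simply asserts the $l^{k/(2l)}$ bound without the pointer to Schnorr's analysis that you rightly flag; your added remark on the discrepancy with the looser $O(l^{k/l})$ form stated in the background is a fair observation but does not reflect any difference in method.
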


\begin{proof}
The determinant of $\mathcal{L}_I$ is that of the upper left $k\times k$ 
submatrix of $H$ and satisfies $\det(\mathcal{L}_I) \leq \prod_{i\leq N}v_{i,i} = \Nm(I)$. 
An BKZ-reduction with block length $l$ returns a basis 
whose first vector has length less than $l^{k/2l}(\det(\mathcal{L}_I))^{1/k}$ in time $2^{O(l)}$. 
\end{proof}

\begin{lemma}\label{lemma:smooth_combinations_BKZ}
In time $2^{O(l)}$, we can find an element $\alpha\in I$ 
such that $\Nm(\alpha)\leq l^{\frac{kn}{2l}(1+o(1))}\Nm(I)^{\frac{n}{k}}.$
\end{lemma}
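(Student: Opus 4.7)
The plan is to mirror the argument of Lemma~\ref{lemma:smooth_combinations}, but substituting the BKZ bound from Lemma~\ref{lemma:smooth_BKZ} in place of the HKZ bound. First I would invoke Lemma~\ref{lemma:smooth_BKZ} to obtain, in time $2^{O(l)}$, a nonzero vector $(\alpha_1,\ldots,\alpha_k)\in\mathcal{L}_I$ with Euclidean length at most $l^{k/(2l)}\Nm(I)^{1/k}$. Setting $\alpha:=\sum_{i<k}\alpha_i\zeta_N^i$ gives an element of $I$, since $\mathcal{L}_I\subseteq I$ by construction.

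The next step is to bound $|\Nm(\alpha)|$ via the resultant, exactly as in the proof of Proposition~\ref{prop:class_group}. Since $N=p^s$, the defining polynomial $\Phi_N$ has coefficient $L_2$-norm at most $\sqrt{n}$, and the coefficient vector of $\alpha$ has $L_2$-norm at most $\|(\alpha_1,\ldots,\alpha_k)\|$. Applying Hadamard's bound to the Sylvester matrix of $\Phi_N$ and $\alpha$ yields
\[
|\Nm(\alpha)|\;=\;|\operatorname{Res}(\Phi_N,\alpha)|\;\leq\;\sqrt{n}^{\,n}\cdot\|(\alpha_1,\ldots,\alpha_k)\|^{n}\;\leq\;n^{n/2}\,l^{\frac{kn}{2l}}\,\Nm(I)^{n/k}.
\]

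The remaining step is to absorb the $n^{n/2}$ prefactor into the advertised $(1+o(1))$ in the exponent of $l$. Writing $n^{n/2}=l^{\frac{n\ln n}{2\ln l}}$, it suffices to show that $\frac{n\ln n}{\ln l}=o\!\left(\frac{kn}{l}\right)$, i.e.\ that $k\ln l \gg l\ln n$. In the intended parameter range this is automatic: the lemma is applied with $l=n^{a}$ and $k=n^{\min\{4a-1,\,b+2a-1-\varepsilon\}}$ for $a>2/5$, so $\ln l=a\ln n$ and $k/l=n^{\min\{3a-1,\,b+a-1-\varepsilon\}}$ grows polynomially in $n$, making $k\ln l/(l\ln n)=(a)\cdot n^{\min\{3a-1,\,b+a-1-\varepsilon\}}\to\infty$. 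Hence $n^{n/2}\leq l^{\frac{kn}{2l}\cdot o(1)}$, and combining with the bound above gives $\Nm(\alpha)\leq l^{\frac{kn}{2l}(1+o(1))}\Nm(I)^{n/k}$ as claimed.

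The only potentially delicate point is this last absorption: it is not unconditional in $k,l,n$ but rests on the regime in which the $\qg$-descent will be instantiated in Section~\ref{sec:short_PIP}. The BKZ call itself costs $2^{O(l)}$ by Lemma~\ref{lemma:smooth_BKZ}, which dominates the linear-algebra work of forming $\alpha$, so the overall running time is $2^{O(l)}$.
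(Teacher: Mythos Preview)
Your argument is correct and follows the same route as the paper: invoke the BKZ bound of Lemma~\ref{lemma:smooth_BKZ}, then bound $|\Nm(\alpha)|=|\operatorname{Res}(\Phi_N,\alpha)|$ by Hadamard as in Proposition~\ref{prop:class_group}, obtaining $n^{n/2}l^{kn/(2l)}\Nm(I)^{n/k}$, and finally absorb $n^{n/2}$ into the $(1+o(1))$ in the exponent. The paper simply asserts the absorption via an underbrace, whereas you actually verify it in the parameter regime $l=n^{a}$, $k=n^{\min\{4a-1,\,b+2a-1-\varepsilon\}}$ used later; that extra care is welcome and does not deviate from the intended proof.
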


\begin{proof}
Let $\alpha$ be the first vector of an BKZ-reduced basis of $\mathcal{L}_I$ with block size $k$. The calculation of this basis 
takes time $2^{O(k)}$ and by Lemma~\ref{lemma:smooth}, the length of its first vector $(\alpha_1,\cdots,\alpha_k)$ is bounded 
by $l^{k/2l}\Nm(I)^{\frac{1}{k}}$. 
By the same argument as in the proof of Proposition~\ref{prop:class_group}, 
the algebraic norm of $\alpha := \sum_i \alpha_i \zeta_N^i$ satisfies 
$$\Nm(\alpha) \leq \left.\sqrt{n}\right.^n \left( \|(\alpha_1,\cdots,\alpha_k)\| \right)^n
\leq \underbrace{n^{n/2} l^{kn/2l}}_{l^{\frac{kn}{2l}(1+o(1))}}\Nm(I)^{\frac{n}{k}}.$$
\end{proof}

\begin{algorithm}[ht]
\caption{One round of the $\qg$-descent}
\begin{algorithmic}[1]\label{alg:one_round_2}
\REQUIRE Prime ideal $\qg$ with $\log(\Nm(\qg))\leq n^b$, $\varepsilon > 0$, $a > 0$, $A\geq 1$, and a conductor $N$. 
\ENSURE Prime ideals $\pg_i$, integers $e_i$ and $\alpha\in\qg$ such that $(\alpha)/\qg = \prod_i\pg_i^{e_i}$ 
and $\log(\Nm(\pg_i))\leq n^{b-\varepsilon}$. 
\STATE $S\leftarrow \left\{\pg_i\ \text{such that }\Nm(\pg_i)\leq 12\log(|\Delta|)^2\right\}$.
\WHILE{no relation has been found}
\STATE $(x_i)\xleftarrow[]{R}[0,A]^{|S|}$ where $R$ is uniform over vectors of weight $n^{1/2}$. 
\STATE  $I\leftarrow \qg\prod_i \pg_i^{x_i}$.
\STATE Construct a basis for the lattice $\mathcal{L}_I$ with $k := n^{\min\left\{4a-1,b+2a-1-\varepsilon\right\}}$. 
\STATE BKZ-reduce $\mathcal{L}_I$ with block size $l :=n^{a}$ and let $\alpha$ be its first vector.
\STATE \textbf{if} $(\alpha)$ is $2^{n^{b-\varepsilon}}$-smooth, find $(\pg_i)$,$(e_i)$ such that $(\alpha) = \prod_i\pg_i^{e_i}$.  
\ENDWHILE
\RETURN $\alpha$, $(e_i)$, $(\pg_i)$, $(e_i)$. 	
\end{algorithmic}
\end{algorithm}

\begin{proposition}[GRH + Heuristic~\ref{heuristic:smoothness_ideals}]
Let $\varepsilon > 0$ be an arbitrarly small constant and let $b,c > 0$ be constants satisfying 
\begin{enumerate}
 \item $2-3a + 2\varepsilon \leq b \leq 7a - 2$.
 \item $\frac{2}{5} + \frac{\varepsilon}{5} \leq  a \leq \frac{1}{2}$. 
\end{enumerate} 
Let $\qg$ be a prime with $\log(\Nm(\qg))\leq n^b$. When $N = p^s$, 
there is a large enough constant $A$ such that Algorithm~\ref{alg:one_round_2} returns a decomposition of 
$\qg$ in $\Cl(\OK)$ as a product of primes $\pg_i$ with 
$\log(\Nm(\pg))\leq n^{b-\varepsilon}$ in time $2^{\tilde{O}(n^{a +  o(1) })}$. 
\end{proposition}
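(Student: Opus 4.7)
The plan is to bound the algebraic norm of the candidate $\alpha$ produced in Step~5 via Lemma~\ref{lemma:smooth_combinations_BKZ}, apply Heuristic~\ref{heuristic:smoothness_ideals} to the resulting $u$-parameter to estimate the smoothness probability, and then combine the expected number of attempts with the BKZ cost per attempt. To start, observe that the ideal $I := \qg\prod_i\pg_i^{x_i}$ built in Step~3 satisfies $\log\Nm(I) = n^b(1+o(1))$: since $|S| \in \tilde O(n)$, $\Nm(\pg_i)\leq 12\log^2|\Delta|$, and $A$ is constant, the additional factor $\prod_i\Nm(\pg_i)^{x_i}$ contributes only a $\text{polylog}(n)$ term to $\log\Nm(I)$, which is absorbed by $n^b$ since $b\geq 2-3a+2\varepsilon > 0$.

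Now, Lemma~\ref{lemma:smooth_combinations_BKZ} applied with $l=n^a$ and $k=n^{\min\{4a-1,\, b+2a-1-\varepsilon\}}$ gives, in time $2^{O(n^a)}$,
$$\log\Nm(\alpha) \leq \tfrac{a\log n}{2}\,k\,n^{1-a}(1+o(1)) + \tfrac{n^{1+b}}{k}(1+o(1)).$$
I would then split into two cases according to which argument of the $\min$ is binding. In \emph{Case~1} ($b \leq 2a+\varepsilon$, so $k=n^{b+2a-1-\varepsilon}$) the two terms become $\tilde O(n^{a+b-\varepsilon})$ and $O(n^{2-2a+\varepsilon})$; the lower bound $b\geq 2-3a+2\varepsilon$ forces the second term into $\tilde O(n^{a+b-\varepsilon})$. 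In \emph{Case~2} ($b>2a+\varepsilon$, so $k=n^{4a-1}$) the terms become $\tilde O(n^{3a})$ and $O(n^{2+b-4a})$; the case hypothesis $b\geq 2a+\varepsilon$ and the constraint $a\geq 2/5+\varepsilon/5$ respectively bound the first and second terms by $\tilde O(n^{a+b-\varepsilon})$. The condition $b\leq 7a-2$ places us in the sub-regime of Case~2 where the first term is the dominant one, making the estimate uniform. In every case we conclude $\log\Nm(\alpha) \leq n^{a+b-\varepsilon}\cdot\text{polylog}(n)$.

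Setting $u := \log\Nm(\alpha)/\log B = \log\Nm(\alpha)/n^{b-\varepsilon}$, the previous bound yields $u \in n^{a+o(1)}$ and therefore $u\ln u \in \tilde O(n^a)$. By Heuristic~\ref{heuristic:smoothness_ideals}, the probability that $(\alpha)$ is $2^{n^{b-\varepsilon}}$-smooth is at least $2^{-\tilde O(n^a)}$, so under Heuristic~\ref{heuristic:relation_lattice} we expect $2^{\tilde O(n^a)}$ attempts of Step~3 to produce a smooth relation. The search space of size $A^{|S|}$ (which is $2^{\tilde\Omega(n)}$ for any constant $A\geq 2$) easily accommodates this, so a constant $A$ of modest size suffices. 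Each attempt is dominated by the BKZ-reduction of $\mathcal{L}_I$, which costs $2^{O(l)}=2^{O(n^a)}$, giving a total run time of $2^{\tilde O(n^{a+o(1)})}$. As in the proof of Proposition~\ref{prop:run_time_qstep}, restricting the decomposition to degree-one primes only introduces a $\text{polylog}(|\Delta|)$ overhead.

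The main obstacle is the exponent bookkeeping in the case analysis of the $\min$: one must verify that the bound $\log\Nm(\alpha)\leq n^{a+b-\varepsilon}\text{polylog}(n)$ holds uniformly across both branches under the stated constraints, and in particular check that the boundary case $b=2a+\varepsilon$ is handled consistently by both expressions for $k$. Once this is established the rest is a direct application of the smoothness heuristic and the BKZ cost bound, exactly as in the proof of Proposition~\ref{prop:run_time_qstep}.
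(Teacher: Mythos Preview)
Your argument is correct and follows essentially the same route as the paper: bound $\log\Nm(\alpha)$ via Lemma~\ref{lemma:smooth_combinations_BKZ}, show it is $\tilde O(n^{a+b-\varepsilon})$ by a case split on which argument of the $\min$ defining $k$ is active, and then apply Heuristic~\ref{heuristic:smoothness_ideals}. Two small points the paper makes explicit that you gloss over: (i) it separately records the coarser bound $\log\Nm(\alpha)\in\tilde O(n^{3a})$ (which is implicit in your Case~2 and follows in Case~1 from $b\le 2a+\varepsilon$) to justify that the NFS-based smoothness test itself costs only $2^{O(n^{a+o(1)})}$ per iteration --- your claim that ``each attempt is dominated by the BKZ-reduction'' needs this; and (ii) it verifies $l\le k\le n$ (from $k\ge n^{1-a+\varepsilon}\ge n^a$ when $a\le 1/2$, and $k\le n^{4a-1}\le n$) so that Lemma~\ref{lemma:smooth_combinations_BKZ} actually applies. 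Also, Heuristic~\ref{heuristic:relation_lattice} is not needed here and is not among the hypotheses of the proposition.
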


\begin{proof}
According to Lemma~\ref{lemma:smooth_combinations}, any $\alpha$ derived in Step~4 of Algorithm~\ref{alg:one_round_2} 
satisfies $\log(\Nm(\alpha)) \in O\left( \frac{nk\log(l)}{l}(1+o(1)) + \frac{n}{k}\log(\Nm(I))\right)$. 
As $k\leq n^{4a-1}$ and $l = n^a$, we get 
$$\frac{nk}{l}\leq n^{1+4c-1-c} n^{3a}.$$
The extra term multiplied to $\qg$ has norm $\Nm(\prod_i\pg_i^{x_i})\leq 2^{\tilde{O}(n^{1/2})}$, 
and since $b > 2-3a > \frac{1}{2}$, we have  
$\log(\Nm(I)) = n^{b}(1 + o(1))$. 
Moreover,  since $k = \min\{ n^{4a - 1} , n^{b+2a-1-\varepsilon}\}$, we get 
\begin{align*}
\frac{n}{k}\log(\Nm(I)) &\leq n^{1 + b - 4a + 1} (1+o(1)) \leq n^{3a}(1+o(1))\\
\frac{n}{k}\log(\Nm(I)) &\leq n^{1+b-(b+2a-1-\varepsilon)} (1+o(1)) \leq n^{2(1-a) + \varepsilon}(1+o(1)) \leq n^{3a}(1+o(1))
\end{align*}
The latter inequality follows from the fact that by definition $a\geq \frac{2}{5} + \frac{\varepsilon}{5}$. 
In each case, $\log(\Nm(\alpha)) \in O (n^{3a}(1+o(1)))$, and testing the smoothness of $\Nm(\alpha)$ with the 
Number Field Sieve takes time $2^{O(n^{a+o(1)})}$. As $k\leq n^{b+2a-1-\varepsilon}$, we also have 
$$\frac{nk}{l}\leq n^{1+b+2a-1-\varepsilon -a} = n^{a +b-\varepsilon}.$$
In addition, we can show that $k\geq n^{1-a+\varepsilon}$. Indeed, from the definition of $a,b$ we get
\begin{align*}
1-a + \varepsilon &\leq 4a-1  \Leftrightarrow \frac{2}{5} + \frac{\varepsilon}{5} \leq a\\
1-a + \varepsilon &\leq b + 2a -1 -\varepsilon \Leftrightarrow 2-3a + 2\varepsilon \leq b
\end{align*}
Therefore, we have the following inequality
$$\frac{n}{k}\log(\Nm(I)) \leq n^{1 + b -(1-a + \varepsilon)}(1+o(1)) = n^{a + b - \varepsilon}(1+o(1)).$$
This means that $\log(\Nm(\alpha)) \in O (n^{a + b - \varepsilon}(1+o(1)))$, and from 
Heuristic~\ref{heuristic:smoothness_ideals}, the number of $\alpha$ we need to test 
before obtaining one such that $(\alpha)/I$ is $2^{n^{b-\varepsilon}}$-smooth is bounded by 
$2^{O(n^{a + o(1)})}$. Following the same strategy as in the proof of Proposition~\ref{prop:run_time_qstep}, 
we can prove that the search space is asymptotically at least $2^{\tilde{O}(n^{1/2})} \gg 2^{O(n^{a + o(1)})}$ 
thus guaranteeing that it is large enough. 
For correctness, we also check that we always have $l\leq k\leq n$. 
First, $k\geq n^{1-a + \varepsilon}$, and 
$$1-a + \varepsilon \geq a \Leftrightarrow a \leq \frac{1}{2} + \frac{\varepsilon}{2}$$
\end{proof}
Since $a < \frac{1}{2}$, we must have $k \geq n^a = l$. On the other hand, we have 
$k = \min\{ n^{4a - 1} , n^{b+2a-1-\varepsilon}\}$, and $4a - 1 \leq 1 \Leftrightarrow a\leq \frac{1}{2}$, 
which is satisfied by definition of $a$. Therefore, we always have $k\leq n$.

\begin{algorithm}
\caption{$\q$-descent}
\begin{algorithmic}[1]\label{alg:full_q_descent_2}
\REQUIRE $I\subseteq \OK$ with $\Nm(I)\leq 2^{n^{b_0}}$, $\varepsilon > 0$ and $a$ such that 
$b_0\leq 7a - 2$ and $\frac{2}{5}+ \frac{\varepsilon}{5}\leq a \leq \frac{1}{2}$. 
\ENSURE Prime ideals $(\qg_i)_{i\leq l}\in\mathcal{B}$ with $\Nm(\q_i)\leq 2^{n^{2-3a+2\varepsilon}}$, integers $(e_i)$ and $(\phi_j)_{j\leq k}\in K$ such that 
$I =\prod_{j\leq k}(\phi_j) \cdot\prod_{i\leq l}\mathfrak{q}_i^{e_i}$.
\STATE $\mathrm{genList}\leftarrow \left\lbrace 1\right\rbrace $, $\mathrm{primeList}\leftarrow \left\lbrace I \right\rbrace$, 
$\mathrm{expList}\leftarrow \{1\}$.
\STATE $b\leftarrow b_0$.
\WHILE{$b > 2-3a+2\varepsilon$}
\FOR {$\q\in\mathrm{primeList}$ with $\Nm(\q)> n^{b-\varepsilon/2}$}
\STATE Find $(\mathfrak{q}_i)_{i\leq l}$, $(e_i)_{i\leq l}$ and $\phi_k$ such that 
$\mathfrak{q}=(\phi_k)\prod_{i\leq l}\mathfrak{q}_i^{e_i}$ by Algorithm~\ref{alg:one_round_2}.
\STATE $\mathrm{genList}\leftarrow \mathrm{genList}\cup\left\lbrace\phi_k\right\rbrace$, $\mathrm{primeList}\leftarrow\mathrm{primeList}\cup\left\lbrace \mathfrak{q}_1,\hdots,\mathfrak{q}_l\right\rbrace$.
\STATE $\mathrm{expList}\leftarrow \mathrm{expList}\cup\{e_1,\cdots,e_l\}$.
\STATE Remove $\q$ from $\mathrm{primeList},\mathrm{expList}$.
\ENDFOR
\STATE $b\leftarrow b - \varepsilon$. 
\ENDWHILE
\RETURN $\mathrm{genList}$, $\mathrm{primeList}$, $\mathrm{expList}$.
\end{algorithmic}
\end{algorithm}

\begin{proposition}[GRH + Heuristic~\ref{heuristic:smoothness_ideals}]
Algorithm~\ref{alg:full_q_descent} decomposes $I$ with $\Nm(I)\leq 2^{n^b}$ as an $2^{O(n^{2-3a+2\varepsilon})}$-smooth product in $\Cl(\OK)$ 
in time $2^{O\left((n^{a + o(1)}\right)}$ for arbitrarily small $\varepsilon > 0$ and 
\begin{enumerate}
 \item $2-3a + 2\varepsilon \leq b \leq 7a - 2$.
 \item $\frac{2}{5} + \frac{\varepsilon}{5} \leq  a \leq \frac{1}{2}$. 
\end{enumerate} 
\end{proposition}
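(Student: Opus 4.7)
The plan is to bound the total cost of Algorithm~\ref{alg:full_q_descent_2} as the product of the cost per round of the $\qg$-descent (given by the preceding proposition) times the total number of rounds, which we show is polynomial in $n$. The cost per round was established to be $2^{O(n^{a+o(1)})}$ under GRH and Heuristic~\ref{heuristic:smoothness_ideals}, as long as each prime ideal $\qg$ on which we call Algorithm~\ref{alg:one_round_2} satisfies $\log(\Nm(\qg))\leq n^b$ with $2-3a+2\varepsilon \leq b \leq 7a-2$ and $\frac{2}{5}+\frac{\varepsilon}{5}\leq a\leq \frac{1}{2}$. So the core of the proof is to verify that these bounds are preserved throughout the descent tree, and to bound the tree's size.

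First I would analyze the depth of the recursion tree. The outer while loop of Algorithm~\ref{alg:full_q_descent_2} decreases the parameter $b$ by $\varepsilon$ at each step, starting at $b_0\leq 7a-2$ and terminating once $b\leq 2-3a+2\varepsilon$. Hence the depth equals $\lceil (b_0-(2-3a+2\varepsilon))/\varepsilon\rceil$, which is a constant depending only on $a,b_0,\varepsilon$ (not on $n$). Throughout the descent, the invariant $2-3a+2\varepsilon \leq b \leq 7a-2$ is maintained at every node, so the previous proposition's hypotheses apply at every recursive call.

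Next I would bound the arity of the tree. At each call of Algorithm~\ref{alg:one_round_2} on an ideal $\qg$ with $\log\Nm(\qg)\leq n^b$, the resulting element $\alpha$ has $\log\Nm(\alpha)\in O(n^{a+b-\varepsilon})\subseteq n^{O(1)}$, so the ideal $(\alpha)/\qg$ factors into at most $n^{O(1)}$ prime ideals $\pg_i$ (since each has $\Nm(\pg_i)\geq 2$). Therefore the arity at each node is polynomially bounded in $n$, and the total number of nodes in the descent tree is bounded by $(n^{O(1)})^{\text{depth}} = n^{O(1)}$.

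Finally, since each of the $n^{O(1)}$ calls of Algorithm~\ref{alg:one_round_2} costs $2^{O(n^{a+o(1)})}$, the overall cost of the $\qg$-descent is $n^{O(1)}\cdot 2^{O(n^{a+o(1)})} = 2^{O(n^{a+o(1)})}$. At termination, every element of $\mathrm{primeList}$ satisfies $\log\Nm(\qg)\leq n^{2-3a+2\varepsilon}$, which gives the claimed smoothness of the output. The main subtlety is the interaction between the parameter constraint $b\leq 7a-2$ at the top level and the invariants that must hold at every intermediate node; but since $b$ only decreases during the descent while the upper bound $7a-2$ is fixed, and the loop guard prevents descending past $2-3a+2\varepsilon$, these invariants are automatically preserved, so the rest of the proof is a routine combination of the previous proposition with the tree size estimate.
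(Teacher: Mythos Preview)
Your proposal is correct and follows exactly the approach the paper uses for the analogous proposition about Algorithm~\ref{alg:full_q_descent} in Section~\ref{sec:q_desc}: constant depth (since $b$ decreases by the fixed constant $\varepsilon$ at each level), polynomial arity (since $\log\Nm(\alpha)\in n^{O(1)}$ bounds the number of prime factors), hence polynomially many calls to the one-round routine, each costing $2^{O(n^{a+o(1)})}$. The paper in fact states this proposition without proof, implicitly relying on the same argument already spelled out for the earlier $\qg$-descent; your write-up supplies precisely that argument, with the additional (and correct) observation that the parameter constraints $2-3a+2\varepsilon\le b\le 7a-2$ needed by the one-round proposition are preserved throughout the descent.
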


\paragraph{\textbf{Resolution step}}

\begin{comment}
Given an input ideal $I$ of $\OK$, we use the method of Cramer, Ducas, and Wesolowski~\cite{Stickelberger} 
to compute a an ideal $J$ with $\Nm(J)\in 2^{\tilde{O}(n^{1+c})}$ for some $c >0$ such that $IJ$ is principal. 
Under the heuristic
used in~\cite{Stickelberger} stating that the class group is generated by the prime ideals above a number of 
primes in $O(\log(n))$, we have $c =  1/2$. Given a generator $\alpha$ of $IJ$, the short generator recovery 
method of Cramer et al.~\cite{CVP_cyclo} returns an element $\beta\in IJ\subseteq I$ approximating the shortest vector of $I$ by a factor 
$e^{\tilde{O}(\sqrt{n})}$. Our ideal decomposition technique is used twice in this resolution step:
\begin{enumerate}
 \item To decompose the class of $I$ as a product of a short basis of generators of $\Cl(\OK)$ in~\cite[Alg. 1 Step 1]{Stickelberger}.
 \item To decompose the principal ideal $IJ\subseteq I$ given by~\cite[Alg. 1 Step 1]{Stickelberger} on a short basis of generators 
 of $\Cl(\OK)$ and find its generator. 
\end{enumerate}
\end{comment}

The $\qg$-descent gives us a decomposition of an input principal ideal $I$ of the form 
$$I = (\phi)\pg_1\cdots\pg_m$$
where $\phi\in K$ and $\Nm(\pg_i)\leq 2^{O(n^{2-3a+2\varepsilon})}$. We refine this decomposition 
into one that involves only primes of norm less than $12\ln(|\Delta|)^2$ by using the precomputed relation 
matrix and we solve a linear system to obtain a generator of $I$. This generator is then used to 
derive a short generator of $I$ by using the techniques of~\cite{CVP_cyclo}. 
We use the fact that under the GRH, the essential part of the HNF of the relation matrix 
precomputed has less than $12\ln(|\Delta|)^2$ columns (corresponding to the prime ideals of norm less than 
$12\ln(|\Delta|)^2$). The HNF has the form $H = \left(\begin{smallmatrix} H_1 & (0) \\ H_2 & I \end{smallmatrix}\right)$ 
where $I$ is an identity block corresponding to the relations of the form $\mathfrak{Q}\sim \prod_i\pg_i^{e_i}$ where $(-e_i)$ 
is a row vector of $H_2$, $\Nm(\pg_i)\leq 12\ln(|\Delta|)^2$ and $\mathfrak{Q}\in\mathcal{B}$, $\Nm(\mathfrak{Q}) > 12\ln(|\Delta|)^2$. 
Given a decomposition of an input ideal over $\mathcal{B}$, it is straightforward to rewrite all large prime ideals as products of 
the ideals of norm less than $12\ln(|\Delta|)^2$. We describe this procedure in Algorithm~\ref{alg:decomp}. 

\begin{algorithm}[ht]%[$\q$-descent]
\caption{Decomposition over a small generating set}
\begin{algorithmic}[1]\label{alg:decomp}
\REQUIRE Hermite form $H = \left(\begin{smallmatrix} H_1 & (0) \\ H_2 & I \end{smallmatrix}\right)$ of the 
matrix of relations between primes of $\mathcal{B}=\{\pg\ \mid\ \Nm(\pg)\leq 2^{n^\kappa}\}$ for some $\kappa > 0$, 
and input ideal $I$.
\ENSURE $(e_i)$ such that $I\sim \prod\pg_i^{e_i}$ for the $\pg_i$ such that $\Nm(\pg_i)\leq 12\ln(|\Delta|)^2$.
%\STATE Find prime ideals $(\mathfrak{q}_i)_{i\leq l}$ of norm bounded by $|\Delta|$ and $\phi_1$ with $\ag= (\phi)\cdot\prod_i\mathfrak{q}_i$ using Algorithm~\ref{alg:first_step}
\STATE Use the $\qg$-descent to find $I\sim \prod_i\qg_i$ with $\qg_i\in\mathcal{B}$.
\STATE $i_0\leftarrow \dim(H_1)$. %|\{ \pg, \ \Nm(\pg)\leq 12\ln(|\Delta|)^2\}|$. 
\FOR {$\qg_j\mid I$ with $\Nm(\qg_j) > 12\ln(|\Delta|)^2$}
\STATE Use the corresponding row in $H_2$ to find $\qg_j\sim \prod_{i\leq i_0}\pg_j^{f_i}$.
\STATE Update the decomposition of $I$.
\ENDFOR
\RETURN $(e_i)$ where $I\sim \prod_{i\leq i_0}\pg_i^{e_i}$. 
\end{algorithmic}
\end{algorithm}

\begin{algorithm}[ht]%[$\q$-descent]
\caption{Short generator with precomputation}
\begin{algorithmic}[1]\label{alg:gamma_SVP}
\REQUIRE Hermite form $H = \left(\begin{smallmatrix} H_1 & (0) \\ H_2 & I \end{smallmatrix}\right)$ of the 
relation matrix with $(\alpha_i\bmod \pg_j)_{i\leq k,j\leq l}$, $(\Log(\alpha_i))_{i\leq m}$ 
where $m = |\mathcal{B}|$ for $\mathcal{B} = \{ \pg\ \mid \ \Nm(\pg)\leq B\}$ for 
some $B>0$ such that $\mathcal{B}^{H_i} = (\alpha_i)\OK$, and input ideal $I$.
\ENSURE $\beta\in I$ with $\|\beta\|\leq e^{\tilde{O}(\sqrt{n})}\Nm(I)^{1/n}$. 
\STATE  $i_0\leftarrow \dim(H_1)$. $\mathcal{B}_0 \leftarrow \{ \pg_1,\cdots,\pg_{i_0}\}$.
\STATE  Decompose $I$ over $\mathcal{B}_0$ using Algorithm~\ref{alg:decomp}.  
\STATE Decompose $I = (\delta) \prod_{i \leq i_0}\pg_i^{e_i}\cdot \prod_{i > i_0}\pg_i^{f_i}$ 
using Algorithm~\ref{alg:full_q_descent_2}. 
\STATE Deduce $(\delta'\bmod \pg_i)$ and $\Log(\delta')$ such that $I = (\delta')\mathcal{B_0}^{(e_i) + \sum_j f_jH_j}$. 
\STATE $\vec{y}\leftarrow (e_i) + \sum_j f_jH_j$. Solve $\vec{x}H = \vec{y}$. 
\STATE Deduce $(\beta_0\bmod \pg_i)$ and $\Log(\beta_0)$ such that $I = (\beta_0)$. 
\STATE Use the methods of~\cite{CVP_cyclo} to derive $(x_i)$ such that $\beta_0\prod_i u_i^{x_i}$ is a short generator of $I$ where 
the $u_i$ are the cyclotomic units.
\STATE Compute $\beta_0\prod_i u_i^{x_i}\bmod\pg_j$ for $j\leq l$ and reconstruct $\beta =  \beta_0\prod_i u_i^{x_i}$ by the 
Chinese Remainder Theorem.
\RETURN $\beta$
\end{algorithmic}
\end{algorithm}

\begin{proposition}[GRH + Heuristic~\ref{heuristic:smoothness_ideals}]
When using a precomputation of cost $2^{O(2-3a+2\varepsilon)}$ corresponding to Algorithm~\ref{alg:class_group} with smoothness bound 
$B \in 2^{O(2-3a+2\varepsilon)}$ for $a,b,\varepsilon > 0$ such that 
\begin{enumerate}
 \item $2-3a + 2\varepsilon \leq b \leq 7a - 2$, 
 \item $\frac{2}{5} + \frac{\varepsilon}{5} \leq  a \leq \frac{1}{2}$, 
\end{enumerate} 
on the input principal  ideal $I$ satisfying 
$\log(\Nm(I))\leq n^b$, the run time of Algorithm~\ref{alg:gamma_SVP} 
is in $2^{n^{a +o(1)}}$, and it 
returns a short generator of $I$ which is a solution to $\gamma$-SVP for $\gamma\in e^{\tilde{O}(\sqrt{n})}$. 
\end{proposition}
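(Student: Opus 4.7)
The plan is to verify each of the ten steps of Algorithm~\ref{alg:gamma_SVP} for runtime (each step bounded by $2^{O(n^{a+o(1)})}$) and for correctness (the output $\beta$ approximates $\lambda_1(I)$ within a factor $e^{\tilde{O}(\sqrt{n})}$). The two most expensive subroutines — the $\qg$-descent (Algorithm~\ref{alg:full_q_descent_2}) and the BDD method of~\cite{CVP_cyclo} — are already analysed in preceding statements, so the bulk of the plan reduces to accounting for the plumbing around these subroutines.

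For the runtime: Steps~1, 2, 7 and 10 are polynomial in $n$ (extracting the shape of the precomputed HNF, a short random walk in the Cayley graph of $\Cl(\OK)$ to land in $\Cl^-(\OK)$ at cost $\Nm(I')\in 2^{\tilde{O}(n)}$, the linear algebra producing $\vec{x}$ from $\vec{x}H=\vec{y}$, and CRT reconstruction modulo $l\in O(\log|\Delta|)$ split primes of polylogarithmic norm). Steps~4 and 9 are polynomial once their input decomposition / log-embedding is supplied, by the assumption on~\cite[Alg.~1]{Stickelberger} and by Algorithm~\ref{alg:CVP}. Steps~3 and 5 both invoke the $\qg$-descent through Algorithm~\ref{alg:decomp}; the input to Step~3 satisfies $\log\Nm(I)\leq n^b$ with $b\leq 7a-2$, so the preceding proposition gives cost $2^{O(n^{a+o(1)})}$. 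For Step~5, we must descend the principal ideal $IJ$: under Heuristic~\ref{heuristic:class_group}, $\Cl^-(\OK)$ is generated by primes above $\operatorname{polylog}(n)$ rational primes, so the Stickelberger correction $J$ is a short product of small-norm prime ideals, keeping $\log\Nm(IJ)$ within the $n^{7a-2}(1+o(1))$ envelope required by Algorithm~\ref{alg:full_q_descent_2}, which again costs $2^{O(n^{a+o(1)})}$. Steps~6 and 8 compose modular and log-vector representations of the $\beta_i$'s already stored in the precomputation, so no fresh exponentiation over $\Z$ is required and the work is polynomial in the size of the precomputed data.

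For correctness and the approximation factor: after Step~5 we have $IJ=(\delta)\prod_{i\leq i_0}\pg_i^{e_i}\prod_{i>i_0}\pg_i^{f_i}$, and the $H_2$ block of the HNF rewrites each large-norm factor $\pg_i$ ($i>i_0$) as an equivalent product over $\mathcal{B}_0$, with the corresponding principal correction encoded by one of the precomputed $\beta_j$'s. Solving $\vec{x}H=\vec{y}$ in Step~7 then expresses $IJ$ as $\bigl(\delta'\prod_j\beta_j^{x_j}\bigr)\OK$; tracking this generator $\beta_0$ only via its residues modulo $\pg_1,\ldots,\pg_l$ and via $\Log(\beta_0)=\Log(\delta')+\sum_j x_j\Log(\beta_j)$ keeps its representation of polynomial size (leveraging the polynomial bound on $\|\Log(\beta_j)\|$ from the precomputation proposition). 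Step~9 applies Babai round-off on the log-cyclotomic-unit lattice, which by Algorithm~\ref{alg:CVP} and~\cite[Sec.~6]{CVP_cyclo} returns a short generator $\beta$ of $IJ$; since $IJ\subseteq I$ we have $\beta\in I$, and the approximation guarantee $\|\beta\|\leq e^{\tilde{O}(\sqrt{n})}\Nm(IJ)^{1/n}$ together with $\Nm(J)\leq e^{\tilde{O}(n)}$ (under Heuristic~\ref{heuristic:class_group}) gives $\|\beta\|\leq e^{\tilde{O}(\sqrt{n})}\Nm(I)^{1/n}\leq e^{\tilde{O}(\sqrt{n})}\lambda_1(I)$, the desired $\gamma$-SVP bound. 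Reconstruction of $\beta$ in Step~10 via CRT is polynomial because the precomputation records each $\beta_j$ modulo the auxiliary primes $\pg_k$.

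The main obstacle is Step~5: bounding $\log\Nm(IJ)$ by $n^{7a-2}$ requires Heuristic~\ref{heuristic:class_group}, since the naive Stickelberger bound $\Nm(J)\leq e^{O(n^{3/2})}$ only fits the $\qg$-descent envelope when $a=1/2$; for $a<1/2$ one must argue that the polylogarithmic generating set allows $J$ to be chosen with a much smaller norm, or equivalently, to be fed directly into the descent as a short factorisation. The other technical subtlety is maintaining a compact representation of $\beta_0$ across Steps~6--8: because the exponents in $\vec{x}$ have bit-size $2^{O(n^a)}$, one cannot write $\prod_j\beta_j^{x_j}$ explicitly on the power basis, and the dual representation (residues modulo a polynomial-size set of split primes plus the log vector) is essential for the complexity claim.
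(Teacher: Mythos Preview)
The paper states this proposition without proof; it is presented as an immediate consequence of the preceding propositions on the $\qg$-descent (Algorithm~\ref{alg:full_q_descent_2}), the precomputation (Algorithm~\ref{alg:precomputation}), and the cited results of~\cite{CVP_cyclo,Stickelberger}. Your step-by-step audit of Algorithm~\ref{alg:gamma_SVP} is therefore more detailed than anything the paper provides, and your identification of Step~5 as the crux is exactly right: the algorithm as literally written asks to descend $IJ$, but $\Nm(J)\in 2^{\tilde{O}(n^{3/2})}$ would force $7a-2\geq 3/2$, i.e.\ $a=1/2$, so for $a<1/2$ one must exploit that $J$ is already delivered by~\cite{Stickelberger} as a product of small-norm primes and bypass the descent on that factor. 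This observation is the real content of the proof and you supply it.

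One minor inaccuracy: in your correctness paragraph you assert $\Nm(J)\leq e^{\tilde{O}(n)}$ under Heuristic~\ref{heuristic:class_group}, but the paper itself (in the paragraph introducing the resolution step) records only $\Nm(J)\in 2^{\tilde{O}(n^{1+c})}$ with $c=1/2$ under that heuristic, i.e.\ $\Nm(J)\in 2^{\tilde{O}(n^{3/2})}$. This does not damage your argument: for the approximation factor one only needs $\Nm(J)^{1/n}\leq e^{\tilde{O}(\sqrt{n})}$, which holds with the weaker bound, and for the runtime you have already argued that $J$ need not be descended at all. So the conclusion stands, but you should cite the $n^{3/2}$ bound rather than $n$.
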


\begin{remark}
We can replace $\varepsilon$ by The constant $\varepsilon > 0$ can be replaced by a value of the form $\frac{\operatorname{Poly}(\log\log(n))}{\log(n)}$. 
In this case, the cost of the precomputation is in $2^{O(2-3a+ o(1))}$.
\end{remark}
For example, with $c= \frac{3}{7}$ and $b=1+o(1)$, if an entity spends a precomputation cost in $2^{(n^{5/7 + \varepsilon})}$ for 
an arbitrarily small $\varepsilon > 0$, then 
all subsequent instances of $\gamma$-SVP (and the search for small generators) in principal ideals $I$ of $\OK$ satisfying $\log(\Nm(I))\leq n^{1+o(1)}$ for $\gamma \in e^{\tilde{O}(\sqrt{n})}$ will 
take heuristic time in $2^{O(n^{3/7+o(1)})}$. In particular, the public keys $g$ of the multilinear maps 
of Garg, Gentry and Halevi satisfy $\|g\|\leq n$, which yields $\Nm(g)\leq 2^{n^{1+o(1)}}$. This means that with a precomputation of cost $2^{(n^{5/7 + \varepsilon})}$, 
our key recovery attack takes heuristic time $2^{O(n^{3/7+o(1)})}$. The attack 
with precomputation can have to main scenarios:
\begin{itemize}
 \item An entity precomputes the data and the attacker downloads it. In this case the storage required is proportional 
 to the expected time of the attack, that is $2^{O(n^a)}$. 
 \item The attacker is allowed to query the entity having done the precomputation. In this case, the entity sends the attacker 
 the significantly smaller matrix corresponding to the relations between the short ideals before hand, and for each challenge, the 
 attacker asks for the decomposition of the large primes in the decomposition of the ideal given as input to the $\qg$-descent. 
\end{itemize}

\begin{remark}
%\paragraph{\textbf{Remark}}
 The precomputation can go on after the computation of the HNF of the large relation matrix. The entity 
 performing the precomputation can continue creating new relations by performing $\qg$-descents on the large 
 primes which are not already in the factor basis $\mathcal{B}$. The HNF of the relation matrix can be updated at 
 a minimal cost. 
\end{remark}

 \begin{remark}
 The private keys in the FHE scheme of Smart and Vertauteren~\cite{fre_smart} are of the form $g = \sum_i a_i\zeta_i$ 
 where $|a_i|\leq 2^{\sqrt{n}}$. In this case, the only bound we have on the norm of the ideal is $\Nm(g)\leq 2^{\tilde{O}(n^{3/2})}$, 
 which means that the PIP algorithm with precomputation presented in this paper would not bring any speedup over the PIP resolution 
 without precomputation (as we need to choose $b = 3/2$ and $a = 1/2$). However, as stated in~\cite{EUROCRYPT17}, the complexity of an 
 attack running in time $2^{n^{1/2+o(1)}}$ expressed in terms of the 
 size $S$ of the key is less than $2^{S^{1/3 + o(1)}}$, which is even better than the complexity we obtain when carrying the 
 attack on the multilinear maps with respect to the size of the keys. Our attack with precomputation allows a speedup with respect 
 to the size of the keys in the case of the multilinear maps since in this context the input size is $S = 2^{n^{1+o(1)}}$ (as opposed 
 to $S = 2^{n^{3/2+o(1)}}$ in the case of the FHE scheme of Smart and Vercauteren). 
\end{remark}

\bibliography{biblio}

\newcommand{\etalchar}[1]{$^{#1}$}
\begin{thebibliography}{{PAR}14}

\bibitem[AD93]{Adleman93}
L.~Adleman and J.~DeMarrais.
\newblock A subexponential algorithm for discrete logarithms over all finite
  fields.
\newblock In D.~Stinson, editor, {\em Advances in Cryptology - CRYPTO '93, 13th
  Annual International Cryptology Conference, Santa Barbara, California, USA,
  August 22-26, 1993, Proceedings}, volume 773 of {\em Lecture Notes in
  Computer Science}, pages 147--158, 1993.

\bibitem[AKS01]{Ajtai_BKZ}
M.~Ajtai, R.~Kumar, and D.~Sivakumar.
\newblock A sieve algorithm for the shortest lattice vector problem.
\newblock In {\em Proceedings of the Thirty-third Annual ACM Symposium on
  Theory of Computing}, STOC '01, pages 601--610, New York, NY, USA, 2001. ACM.

\bibitem[Bab85]{barbai}
L.~Babai.
\newblock On {L}ov\'{a}sz' lattice reduction and the nearest lattice point
  problem.
\newblock In K.~Mehlhorn, editor, {\em STACS 85}, volume 182 of {\em Lecture
  Notes in Computer Science}, pages 13--20. Springer Berlin Heidelberg, 1985.

\bibitem[Bac90]{BBounds}
E.~Bach.
\newblock Explicit bounds for primality testing and related problems.
\newblock {\em Math. Comp.}, 55(191):355--380, 1990.

\bibitem[Bac95]{BachEulerProd}
E.~Bach.
\newblock Improved approximations for {E}uler products.
\newblock In {\em Number Theory: CMS Proc.}, volume~15, pages 13--28. Amer.
  Math. Soc., Providence, RI, 1995.

\bibitem[BCP97]{magma}
W.~Bosma, J.~Cannon, and C.~Playoust.
\newblock The {M}agma algebra system. {I}. the user language.
\newblock {\em J. Symbolic Comput.}, 24(3-4):235--265, 1997.

\bibitem[BEF{\etalchar{+}}17]{EUROCRYPT17}
J.-{F}. Biasse, T.~Espitau, P.-{A}. Fouque, A.~G\'{e}lin, and P.~Kirchner.
\newblock Computing generator in cyclotomic integer rings, a subfield algorithm
  for the principal ideal problem in {L}(1/2) and application to cryptanalysis
  of a fhe scheme.
\newblock Cryptology ePrint Archive, Report 2017/142, 2017.
\newblock \url{http://eprint.iacr.org/2017/142}.

\bibitem[Ber]{bernstein}
D.~Bernstein.
\newblock How to find smooth parts of integers.
\newblock \url{https://cr.yp.to/factorization/smoothparts-20040510.pdf}.

\bibitem[BF12]{biasse_fieker}
J.-F. Biasse and C.~Fieker.
\newblock New techniques for computing the ideal class group and a system of
  fundamental units in number fields.
\newblock {\em CoRR}, abs/1204.1294, 2012.

\bibitem[BF14]{ANTS_XI}
J.-F. Biasse and C.~Fieker.
\newblock Subexponential class group and unit group computation in large degree
  number fields.
\newblock {\em LMS Journal of Computation and Mathematics}, 17:385--403, 1
  2014.

\bibitem[BFH17]{HNF_symb}
J.-F. Biasse, C.~Fieker, and T.~Hofmann.
\newblock On the computation of the \{HNF\} of a module over the ring of
  integers of a number field.
\newblock {\em Journal of Symbolic Computation}, 80, Part 3:581 -- 615, 2017.

\bibitem[Biaa]{biasse}
J-F. Biasse.
\newblock Improvements in the computation of ideal class groups of imaginary
  quadratic number fields.
\newblock To appear in \textit{Advances in Mathematics of Communications}.

\bibitem[Biab]{biasse_subexp}
J-F. Biasse.
\newblock Subexponential time relations in large degree number fields.
\newblock Submitted to \textit{Advances in Mathematics of Communications}.

\bibitem[Bia11]{biasse_Phd}
J-F. Biasse.
\newblock {\em Subexponential algorithms for number fields}.
\newblock PhD thesis, \'{E}cole Polytechnique (Paris), 2011.

\bibitem[BPR04]{Pomerance_conjecture}
J.~Buhler, C.~Pomerance, and L.~Robertson.
\newblock Heuristics for class numbers of prime-power real cyclotomic fields.
\newblock In {\em High Primes and Misdemeanours: Lectures in Honour of the 60th
  Birthday of Hugh Cowie Williams}, Fields Inst. Commun, pages 149--157,
  Providence, RI, 2004. Amer. Math. Soc.

\bibitem[BS16]{SODA16}
J.-F. Biasse and F.~Song.
\newblock Efficient quantum algorithms for computing class groups and solving
  the principal ideal problem in arbitrary degree number fields.
\newblock In R.~Krauthgamer, editor, {\em Proceedings of the Twenty-Seventh
  Annual {ACM-SIAM} Symposium on Discrete Algorithms, {SODA} 2016, Arlington,
  VA, USA, January 10-12, 2016}, pages 893--902. {SIAM}, 2016.

\bibitem[Buc90]{Buchmann}
J.~Buchmann.
\newblock A subexponential algorithm for the determination of class groups and
  regulators of algebraic number fields.
\newblock In Catherine Goldstein, editor, {\em S\'eminaire de Th\'eorie des
  Nombres, Paris 1988--1989}, Progress in Mathematics, pages 27--41, Boston,
  1990. Birkh\"auser.

\bibitem[Buc89]{BSub}
J.~Buchmann.
\newblock A subexponential algorithm for the determination of class groups and
  regulators of algebraic number fields.
\newblock In {\em S\'{e}minaire de Th\'{e}orie des Nombres}, pages 27--41,
  Paris, 1988-89.

\bibitem[CDPR16]{CVP_cyclo}
R.~Cramer, L.~Ducas, C.~Peikert, and O.~Regev.
\newblock Recovering short generators of principal ideals in cyclotomic rings.
\newblock In M.~Fischlin and J.{-}S. Coron, editors, {\em Advances in
  Cryptology - {EUROCRYPT} 2016 - 35th Annual International Conference on the
  Theory and Applications of Cryptographic Techniques, Vienna, Austria, May
  8-12, 2016, Proceedings, Part {II}}, volume 9666 of {\em Lecture Notes in
  Computer Science}, pages 559--585. Springer, 2016.

\bibitem[CDW16]{Stickelberger}
R.~Cramer, L.~Ducas, and B.~Wesolowski.
\newblock Short {S}tickelberger class relations and application to
  {I}deal-{SVP}.
\newblock Cryptology ePrint Archive, Report 2016/885, 2016.
\newblock \url{http://eprint.iacr.org/2016/885}.

\bibitem[CGS]{GCHQ}
P.~Campbell, M.~Groves, and D.~Shepherd.
\newblock {S}{O}{L}{I}{L}{O}{Q}{U}{Y}: a cautionary tale.
\newblock online draft available at
  \url{http://docbox.etsi.org/Workshop/2014/201410_CRYPTO/S07_Systems_and_Attacks/S07_Groves_Annex.pdf}.

\bibitem[CL15]{CL15}
J.~H. Cheon and C.~Lee.
\newblock Approximate algorithms on lattices with small determinant.
\newblock Cryptology ePrint Archive, Report 2015/461, 2015.
\newblock \url{http://eprint.iacr.org/2015/461}.

\bibitem[DLL{\etalchar{+}}02]{Lenstra:3LP}
B.~Dodson, P.~C. Leyland, A.~K. Lenstra, A.~Muffett, and S.~Wagstaff.
\newblock M{P}{Q}{S} with three large primes.
\newblock In {\em ANTS-V: Proceedings of the 5th International Symposium on
  Algorithmic Number Theory}, volume 2369 of {\em Lecture Note in Computer
  Science}, pages 446--460. Springer-Verlag, 2002.

\bibitem[EGT]{Enge2}
{A}ndreas {E}nge, {P}ierrick {G}audry, and {E}mmanuel {T}hom{\'e}.
\newblock {A}n ${L} (1/3)$ {D}iscrete {L}ogarithm {A}lgorithm for {L}ow
  {D}egree {C}urves.

\bibitem[EHKS14]{STOC2014}
K.~Eisentr\"{a}ger, S.~Halgren, A.~Kitaev, and F.~Song.
\newblock A quantum algorithm for computing the unit group of an arbitrary
  degree number field.
\newblock In {\em Proceedings of the 46th Annual ACM Symposium on Theory of
  Computing}, STOC '14, pages 293--302, New York, NY, USA, 2014. ACM.

\bibitem[Gar76]{signature_unit}
D.~Garbanati.
\newblock Units with norm - 1 and signatures of units.
\newblock {\em Journal für die reine und angewandte Mathematik}, pages
  164--175, 1976.

\bibitem[GGH13]{mult_maps}
S.~Garg, C.~Gentry, and S.~Halevi.
\newblock Candidate multilinear maps from ideal lattices.
\newblock In T.~Johansson and P.~Nguyen, editors, {\em Advances in Cryptology -
  {EUROCRYPT} 2013, 32nd Annual International Conference on the Theory and
  Applications of Cryptographic Techniques, Athens, Greece, May 26-30, 2013.
  Proceedings}, pages 1--17, 2013.

\bibitem[Gor93]{gordon_GFp}
D.~Gordon.
\newblock Discrete logarithms in {G}{F}(p) using the number field sieve.
\newblock {\em SIAM J. Discrete Math}, 6:124--138, 1993.

\bibitem[GS02]{Gentry_Szydlo}
C.~Gentry and M.~Szydlo.
\newblock Cryptanalysis of the revised {NTRU} signature scheme.
\newblock In {\em Advances in Cryptology - EUROCRYPT 2002, International
  Conference on the Theory and Applications of Cryptographic Techniques,
  Amsterdam, The Netherlands, April 28 - May 2, 2002, Proceedings}, volume 2332
  of {\em Lecture Notes in Computer Science}, pages 299--320. Springer, 2002.

\bibitem[HM89a]{hafner}
J.L. Hafner and K.S. McCurley.
\newblock A rigorous subexponential algorithm for computation of class groups.
\newblock {\em Journal of American Mathematical Society}, 2:839--850, 1989.

\bibitem[HM89b]{HMSub}
J.L. Hafner and K.S. McCurley.
\newblock A rigorous subexponential algorithm for computation of class groups.
\newblock {\em J. Amer. Math. Soc.}, 2:837--850, 1989.

\bibitem[HS04]{szydlo_norm_eq}
N.~Howgrave{-}Graham and M.~Szydlo.
\newblock A method to solve cyclotomic norm equations.
\newblock In D.~Buell, editor, {\em Algorithmic Number Theory, 6th
  International Symposium, ANTS-VI, Burlington, VT, USA, June 13-18, 2004,
  Proceedings}, volume 3076 of {\em Lecture Notes in Computer Science}, pages
  272--279. Springer, 2004.

\bibitem[Jac99]{JacobsonPhd}
M.~J. Jacobson, Jr.
\newblock {\em Subexponential Class Group Computation in Quadratic Orders}.
\newblock PhD thesis, Technische Universit\"{a}t Darmstadt, Darmstadt, Germany,
  1999.

\bibitem[Jac00]{Jdl}
M.~J. Jacobson, Jr.
\newblock Computing discrete logarithms in quadratic orders.
\newblock {\em Journal of Cryptology}, 13:473--492, 2000.

\bibitem[JLSV06]{NFS_Fre}
A.~Joux, R.~Lercier, N.~P. Smart, and F.~Vercauteren.
\newblock The number field sieve in the medium prime case.
\newblock In Cynthia Dwork, editor, {\em Advances in Cryptology - CRYPTO 2006,
  26th Annual International Cryptology Conference, Santa Barbara, California,
  USA, August 20-24, 2006, Proceedings}, volume 4117 of {\em Lecture Notes in
  Computer Science}, pages 326--344. Springer, 2006.

\bibitem[KL07]{squares_unit}
M.-H. Kim and S.-G. Lim.
\newblock Square classes of totally positive units.
\newblock {\em Journal of Number Theory}, 125(1):1 -- 6, 2007.

\bibitem[LLL82]{LLL}
A.K. Lenstra, H.W. Lenstra, and L.~Lov\'{a}sz.
\newblock Factoring polynomials with rational coefficients.
\newblock {\em Mathematische Annalen}, 261:515--534, 1982.

\bibitem[LLMP90]{NF_sieve}
A.~K. Lenstra, H.~W. Lenstra, Jr., M.~S. Manasse, and J.~M. Pollard.
\newblock The number field sieve.
\newblock In {\em STOC '90: Proceedings of the twenty-second annual ACM
  symposium on Theory of computing}, pages 564--572, New York, NY, USA, 1990.
  ACM.

\bibitem[LM91]{Lenstra:2LP}
A.~K. Lenstra and M.~S. Manasse.
\newblock Factoring with two large primes (extended abstract).
\newblock In {\em Advances in Cryptology - EUROCRYPT '90}, volume 473 of {\em
  Lecture Note in Computer Science}, pages 72--82. Springer-Verlag, 1991.

\bibitem[NS06]{LLL_average}
P.~Nguyen and D.~Stehl{\'{e}}.
\newblock {LLL} on the average.
\newblock In Florian Hess, Sebastian Pauli, and Michael~E. Pohst, editors, {\em
  Algorithmic Number Theory, 7th International Symposium, ANTS-VII, Berlin,
  Germany, July 23-28, 2006, Proceedings}, volume 4076 of {\em Lecture Notes in
  Computer Science}, pages 238--256. Springer, 2006.

\bibitem[{PAR}14]{pari}
{PARI~Group}, Bordeaux.
\newblock {\em {PARI/GP version {\tt 2.7.0}}}, 2014.
\newblock available from \url{http://pari.math.u-bordeaux.fr/}.

\bibitem[Pom85]{pomerance_qs}
C.~Pomerance.
\newblock The quadratic sieve factoring algorithm.
\newblock In {\em Proc. of the EUROCRYPT 84 workshop on Advances in cryptology:
  theory and application of cryptographic techniques}, pages 169--182, New
  York, NY, USA, 1985. Springer-Verlag New York, Inc.

\bibitem[Sch]{Pari_implementation}
J.~Schank.
\newblock {L}og{CVP}, {P}ari implementation of {CVP} in
  $\text{Log}(\mathbb{Z}[\zeta_{2^n}]^*$.
\newblock \url{https://github.com/jschanck-si/logcvp}.

\bibitem[Sco04]{Scourfield}
E.~Scourfield.
\newblock On ideals free of large prime factors.
\newblock {\em Journal de Th\'{e}orie des Nombres de Bordeaux}, 16(3):733--772,
  2004.

\bibitem[Sto00]{Arne}
A.~Storjohann.
\newblock {\em Algorithms for Matrix Canonical Forms}.
\newblock PhD thesis, Department of Computer Science, Swiss Federal Institute
  of Technology -- ETH, 2000.

\bibitem[SV10]{fre_smart}
N.~Smart and F.~Vercauteren.
\newblock Fully homomorphic encryption with relatively small key and ciphertext
  sizes.
\newblock In P.~Nguyen and D.~Pointcheval, editors, {\em Public Key
  Cryptography PKC 2010}, volume 6056 of {\em Lecture Notes in Computer
  Science}, pages 420--443. Springer Berlin Heidelberg, 2010.

\bibitem[Was82]{Washington}
L.~Washington.
\newblock {\em Introduction to Cyclotomic Fields}, volume~83 of {\em Graduate
  Texts in Mathematics}.
\newblock Springer {V}erlag, New {Y}ork, 1982.

\bibitem[Web99]{weber}
H.~Weber.
\newblock Lehrbuch der {A}lgebra, vol {II}, zweite auflage, 1899.
\newblock Vieweg, Braunschweig.

\end{thebibliography}

\end{document}